\newcommand{\bburl}[1]{\textcolor{blue}{\url{#1}}}
\newcommand{\E}{\mathbb{E}}
\renewcommand{\E}{\mathbb{E}}
\numberwithin{equation}{section}
\newtheorem{thm}{Theorem}[section]
\newtheorem{lem}[thm]{Lemma}
\newtheorem{exa}[thm]{Example}
\newtheorem{defi}[thm]{Definition}
\theoremstyle{plain}
\newtheorem{lemma}[thm]{Lemma}
\newtheorem{proposition}[thm]{Proposition}
\newtheorem{theorem}[thm]{Theorem}
\newtheorem{rem}[thm]{Remark}
\newtheorem{rek}[thm]{Remark}
\newcommand\be{\begin{equation}}
\newcommand\ee{\end{equation}}
\newcommand\bea{\begin{eqnarray}}
\newcommand\eea{\end{eqnarray}}
\newcommand\bi{\begin{itemize}}
	\newcommand\ei{\end{itemize}}
\newcommand\ben{\begin{enumerate}}
	\newcommand\een{\end{enumerate}}
\newcommand\bc{\begin{center}}
	\newcommand\ec{\end{center}}
\newcommand\ba{\begin{array}}
	\newcommand\ea{\end{array}}
\newcommand{\ga}{\alpha}
\newcommand{\R}{\ensuremath{\mathbb{R}}}
\newcommand{\C}{\ensuremath{\mathbb{C}}}
\newcommand{\Z}{\ensuremath{\mathbb{Z}}}
\newcommand{\N}{\mathbb{N}}
\newcommand{\hr}[1]{\href{#1}{\url{#1}}}
\renewcommand \l {\lambda}
\newcommand{\var}{\text{Var}}
\newcommand{\Tr}{\text{Tr}}
\DeclareMathOperator{\tr}{Tr}
\newcommand*{\reff}[1]{\hyperref[#1]{\ref{#1}}}
\DeclareMathOperator{\rank}{rank}
\title{The limiting spectral measure for an ensemble of generalized checkerboard matrices}
\author{Fangu Chen}
\email{\textcolor{blue}{\href{mailto:fangu@umich.edu}{fangu@umich.edu}}}
\address{Department of Mathematics, University of Michigan, Ann Arbor, MI 48109}
\author{Yuxin Lin}
\email{\textcolor{blue}{\href{mailto:ylin9@nd.edu}{ylin9@nd.edu}}}
\address{Department of Mathematics, University of Notre Dame, Notre Dame, IN 46556}
\author{Steven J. Miller}
\email{\textcolor{blue}{\href{mailto:sjm1@williams.edu, Steven.Miller.MC.96@aya.yale.edu}{sjm1@williams.edu,Steven.Miller.MC.96@aya.yale.edu}}}
\address{Department of Mathematics and Statistics, Williams College, Williamstown, MA 01267}
\author{Jiahui Yu}
\email{\textcolor{blue}{\href{jyad2018@mymail.pomona.edu}{jyad2018@mymail.pomona.edu}}}
\address{Department of Mathematics, Pomona College, Claremont, CA 91711}
\thanks{The authors were partially supported by NSF Grants DMS1947438 and  DMS1561945, the University of Michigan, the University of Notre Dame, Pomona College and Williams College. We thank the authors of \cite{BCDHMSTPY}, especially Roger Van Peski, for some helpful conversations and Akihiro Takigawa for the help on programming.}
\subjclass[2010]{15B52 (primary), 15B57, 15B33 (secondary)}
\keywords{Random Matrix Ensembles, Checkerboard Matrices, Limiting Spectral Measure, Gaussian Orthogonal Ensemble}
\date{\today}
\begin{document}

\begin{abstract} 
Random matrix theory successfully models many systems, from the energy levels of heavy nuclei to zeros of $L$-functions. While most ensembles studied have continuous spectral distribution, Burkhardt et al introduced the ensemble of $k$-checkerboard matrices, a variation of Wigner matrices with entries in generalized checkerboard patterns fixed and constant. In this family, $N-k$ of the eigenvalues are of size $O(\sqrt{N})$ and were called bulk while the rest are tightly contrained around a multiple of $N$ and were called blip.

We extend their work by allowing the fixed entries to take different constant values. We can construct ensembles with blip eigenvalues at any multiples of $N$ we want with any multiplicity (thus we can have the blips occur at sequences such as the primes or the Fibonaccis). The presence of multiple blips creates technical challenges to separate them and to look at only one blip at a time. We overcome this by choosing a suitable weight
function which allows us to localize at each blip, and then exploiting cancellation to deal with the resulting combinatorics to determine the average moments of the ensemble; we then apply standard methods from probability to prove that almost surely the limiting distributions of the matrices converge to the average behavior as the matrix size tends to infinity. For blips with just one eigenvalue in the limit we have convergence to a Dirac delta spike, while if there are $k$ eigenvalues in a blip we again obtain hollow $k \times k$ GOE behavior.
\end{abstract}

\maketitle
\tableofcontents


\section{Introduction}



\subsection{Background}\label{sec:background}
Initially introduced by Wishart \cite{Wis} for some problems in statistics, random matrix theory has successfully modeled a large number of systems from energy levels of heavy nuclei to zeros of the Riemann zeta function; see for example the surveys \cite{Bai, BFMT-B, Con, FM, KaSa, KeSn} and the textbooks \cite{Fo, Meh, MT-B, Tao2}.
A simple but important example is the ensemble of real symmetric matrices whose upper triangular entries are independent, identically distributed random variables from some fixed probability distribution with mean $0$, variance $1$ and finite higher moments. Wigner's semi-circle law states that as the size of the matrix $N\to\infty$, the properly normalized spectral distribution of a matrix from the ensemble converges almost surely to a semi-circle (or semi-ellipse):
\begin{equation}\label{wignersemi}
    \sigma_{R}(x) = \begin{cases}
\frac{2}{\pi R^2}\sqrt{R^2 - x^{2}} & \text{{\rm if} } |x| \leq R,\\
0 & \text{{\rm if} } |x| > R.
\end{cases}
\end{equation}
See \cite{Wig1, Wig2, Wig3, Wig4, Wig5} for more details.


Besides the more well-known families such as the Gaussian Orthogonal, Unitary and Symplectic Ensembles, many other special ensembles have been studied; see for example \cite{Bai, BasBo1, BasBo2, BanBo, BLMST, BCG, BHS1, BHS2, BM, BDJ, GKMN, HM, JMRR, JMP, Kar, KKMSX, LW, MMS, MNS, MSTW, McK, Me, Sch}, where the additional structures on the entries of the matrices lead to different behaviors of the eigenvalues in the limit.



For most ensembles that people have studied, while it is possible to prove the convergence of the limiting spectral measure, in only a few (such as $d$-regular graphs \cite{McK}, block circulant matrices \cite{KKMSX} and palindromic Toeplitz matrices \cite{MMS}) can the limiting distribution be written down in a nice, closed form expression.


This paper is a sequel to \cite{BCDHMSTPY}, where they introduce ensembles of checkerboard matrices which also have a nice, closed-form expression for its limiting distribution. The spectrum splits into two; most of the eigenvalues are in the bulk and are of size $\sqrt{N}$,  but a small number are of size $N$. They studied the splitting behavior of the ensemble similar to that in \cite{CDF1, CDF2}, and used the combinatorial method in the style of \cite{KKMSX, MMS}. The ensemble in \cite{BCDHMSTPY} is defined as follows in the real symmetric case.  \begin{defi}\label{defn:checkerboard}
Fix $k\in\mathbb{N}$ and $w\in\mathbb{R}$. The $N \times N$ $(k,w)$-checkerboard ensemble over $\mathbb{R}$ is the ensemble of matrices $M = (m_{ij})$ given by
\begin{equation}\label{def $k$-checkerboard matrix}
m_{ij}\ =\ \begin{cases}
a_{ij} & \text{{\rm if} } i \not\equiv j \bmod k\\
w & \text{{\rm if} } i \equiv j \bmod k,
\end{cases}
\end{equation}
where $a_{ij}=a_{ji}$ are i.i.d. random variables with mean 0, variance 1, and finite higher moments, and the probability measure on the ensemble given by the natural product probability measure.
\end{defi} 

For this ensemble $N-k$ of the eigenvalues (called the \textbf{bulk} eigenvalues) are of order $\sqrt{N}$ and converge to a semi-circle, while $k$ of the eigenvalues (called the \textbf{blip} eigenvalues) are of order $N$ and converge to the spectral distribution of a $k\times k$ hollow Gaussian orthogonal ensemble. 


\begin{defi} \label{def hollow GOE}
The $k\times k$ \textbf{hollow Gaussian Orthogonal Ensemble} is given by $k\times k$ matrices $A = (a_{ij}) = A^T$ with
\begin{equation}
a_{ij} = \begin{cases}
\mathcal{N}_{\R}(0, 1) & \text{{\rm if} } i \neq j \\
0 & \text{{\rm if} } i = j.
\end{cases}
\end{equation}
\end{defi}
See \cite{BCDHMSTPY} for a collection of histograms of eigenvalues of matrices from some $k\times k$ hollow GOE.
\subsection{Generalized Checkerboard Ensembles}
We generalize \cite{BCDHMSTPY} by allowing the constant $w$ to take different values. 
While the Checkerboard ensembles in \cite{BCDHMSTPY} only allow one blip for each ensmble, the generalized Checkerboard ensembles allow arbitrarily many blips for each ensemble. Moreover, we have control over the positions of these blips.
That is, given a list of points, the generalized checkerboard ensemble allows the spectrum at those points in a ``non-trivial'' way. We can 
always ``trivially'' construct ensembles with prescribed locations and frequency by taking a diagonal union of block matrices. But then the blocks are independent from each other. The significance of the generalized checkerboard ensemble is that we can control the locations of normalized eigenvalues within an ensemble that doesn't have independent diagonal blocks. It is a "mixed" matrix whose eigenvalues have a nice split limiting distribution.


\begin{defi}\label{generalized cb}
Fix $k\in \mathbb{N}$ and a $k$-tuple of real numbers $W = (w_1,\dots, w_{k})$, then the $N\times N$ $(k, W)$-checkerboard ensemble is the ensemble of matrices $A_N=(m_{ij})$ given by \begin{align}
    m_{ij} \; = \; \begin{cases}
    a_{ij} & \text{ if } i\not\equiv j\pmod{k}, \\
    w_{u} & \text{ if } i\equiv j \equiv u\pmod{k},\text{ with  } u\in \{1,2,\dots, k\},
    \end{cases}
\end{align} where $a_{ij} = a_{ji}$ are independent and identically distributed random variables with mean $0$, variance $1$, and finite higher moments.
\end{defi}



For example, when $k=3$, $W=(1,1,2)$, a $ (3,W)$ checkerboard looks like the following (we assume $3|N$):

$$X = \begin{pmatrix}
	1 & a_{12} & a_{13} & 1 & a_{15} & a_{16} & \dots & 1 & a_{1N-1} & a_{1N} \\
	a_{12} & 1 & a_{23} & a_{24} & 1 & a_{26} & \dots & a_{2N-2} & 1 &a_{2N} \\
	a_{13} & a_{23} & 2 & a_{34}  & a_{35} & 2 &\dots & a_{3 N-2} & a_{3N-1} & 2 \\
	\vdots & \vdots & \vdots & \vdots & \vdots & \vdots & \ddots & \vdots & \vdots & \vdots \\
     a_{1N} & a_{2N} & 2 & a_{4N} & a_{5N} & 2 & \dots & a_{N-2N} & a_{N-1N} & 2 \\
	\end{pmatrix}.$$
	
\subsection{Results}
What makes the checkerboard ensemble in \cite{BCDHMSTPY} interesting is that the eigenvalues of a matrix from the ensemble almost surely fall into two separate regimes. With our generalization we can exploit the freedom to choose different constants to force the eigenvalues to fall into more regimes. To be more precise, using matrix perturbation theory we prove the following result.

\begin{restatable}{theorem}{bulkregimes}
Let $\{A_N\}_{N\in\N}$ be a sequence of $(k,W)$-checkerboard matrices. Suppose that $W$ has $x$ non-zero entries and there are $s$ distinct $w$'s, then almost surely as $N\rightarrow \infty$, the eigenvalues of $A_N$ fall into $s+1$ regimes: $N-x$ of the eigenvalues are $O(N^{1/2+\epsilon})$ and if $w_i'$ appears $k_i$ times, $k_i$ eigenvalues are of magnitude $Nw_i'/k+O(N^{1/2+\epsilon})$.
\end{restatable}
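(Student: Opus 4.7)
The plan is to decompose $A_N = D_N + R_N$ into a deterministic rank-$x$ piece $D_N$ holding the checkerboard constants and a random piece $R_N$ holding the off-checkerboard i.i.d.\ variables (and zero on the checkerboard), to diagonalize $D_N$ exactly, control $\|R_N\|_{\text{op}}$ almost surely, and conclude via Weyl's perturbation inequality. The key observation is that $D_N$ has an explicit block structure and a very small number of nonzero eigenvalues, while $R_N$ is a sparse variant of a Wigner matrix whose operator norm is $O(N^{1/2+\epsilon})$ almost surely.

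First, I would conjugate $D_N$ by the permutation that groups rows and columns according to their residue class modulo $k$. This turns $D_N$ into a block-diagonal matrix whose $u$-th block is $w_u J_{n_u}$, where $J_{n_u}$ is the all-ones matrix and $n_u \in \{\lfloor N/k\rfloor,\lceil N/k\rceil\}$. Each block $w_u J_{n_u}$ has rank one (when $w_u\neq 0$) with a single nonzero eigenvalue $w_u n_u = w_u N/k + O(1)$, and zero eigenvalues of multiplicity $n_u - 1$; when $w_u = 0$ the block contributes only zero eigenvalues. Totaling, the spectrum of $D_N$ consists of the zero eigenvalue with multiplicity $N-x$ together with $x$ nonzero values, which after grouping by the $s$ distinct nonzero $w_i'$ produces exactly $k_i$ eigenvalues clustered at $w_i' N/k + O(1)$ for each $i$.

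Second, I would show $\|R_N\|_{\text{op}} = O(N^{1/2+\epsilon})$ almost surely. The matrix $R_N$ is real symmetric with independent upper-triangular entries that are either identically zero (at the checkerboard positions, an $O(N)$-deficient set per row) or the mean-zero, variance-one variables $a_{ij}$. A standard trace-moment calculation gives a bound of the form $\E\,\tr R_N^{2p} \leq C_p N^{p+1}$ for each fixed integer $p$, exactly as for the usual Wigner ensemble, since the forced zeros only remove contributions from the combinatorial sum over pair partitions. Combining this with Markov's inequality and the Borel--Cantelli lemma, taking $p$ large enough depending on $\epsilon$, yields $\|R_N\|_{\text{op}} \leq N^{1/2+\epsilon}$ for all sufficiently large $N$ almost surely.

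Third, Weyl's inequality for Hermitian matrices states that if $A = D + R$, then $|\lambda_i(A) - \lambda_i(D)| \leq \|R\|_{\text{op}}$ with eigenvalues listed in sorted order. Applied to $A_N = D_N + R_N$ and combined with the operator-norm bound, this immediately places $N-x$ of the eigenvalues of $A_N$ (those perturbed from the zero eigenvalues of $D_N$) in the bulk regime of size $O(N^{1/2+\epsilon})$, and produces for each distinct $w_i'$ exactly $k_i$ blip eigenvalues at $w_i'N/k + O(N^{1/2+\epsilon})$, as claimed. The main technical obstacle is the operator-norm bound on $R_N$: because the checkerboard positions are forced to be zero rather than genuinely i.i.d., the ensemble is not literally a Wigner matrix, and one must verify that the standard trace-moment argument still applies. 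This is routine since zeroing out entries can only shrink the relevant combinatorial sums, but it needs to be written out carefully to justify the almost-sure localization.
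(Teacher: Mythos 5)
Your proposal is correct and follows essentially the same route as the paper: decompose $A_N$ into the deterministic checkerboard constant matrix $Z$ plus a random $(k,0)$-checkerboard matrix, determine the spectrum of $Z$ (the paper exhibits eigenvectors directly while you conjugate to block-diagonal $w_u J_{n_u}$ form; these are equivalent), bound the operator norm of the random part by $O(N^{1/2+\epsilon})$ almost surely via trace moments and Borel--Cantelli, and conclude with Weyl's inequality. The only superficial difference is presentational.
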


As in \cite{BCDHMSTPY}, we refer to the $N-x$ eigenvalues that are on the order of $\sqrt{N}$ as the eigenvalues in the \textbf{bulk}, while for each distinct $w_i$, the $k_i$ eigenvalues near $Nw_i/k$ are called the eigenvalues in the \textbf{blips}. We study the eigenvalue distribution of each regime. 


For the remainder of this paper,  $\boldsymbol{A_N}$ always refers to an $N \times N$ matrix.


Let $\nu_{A_N}$ be the empirical spectral measure of an $N\times N$ matrix $A_N$, where we have normalized the eigenvalues by dividing by $\sqrt{N}$:
\begin{equation}\label{eqn Wigner Spectral Distribution Measure}
\nu_{A_ N}(x)\ =\ \frac{1}{N}\sum_{\lambda \text{ \rm an eigenvalue of } A_N} \delta\left(x - \frac{\lambda}{\sqrt{N}}\right).
\end{equation}
For example, Figure \ref{figexampleoftheblip} gives this normalized eigenvalue distribution of a collection of $500\times 500$ $(6, W)$-checkerboard matrices with $W = (1,-2,-2,3,3,3)$.
\begin{figure}[h]
\begin{center}
\includegraphics[width = 16cm, height = 7 cm]{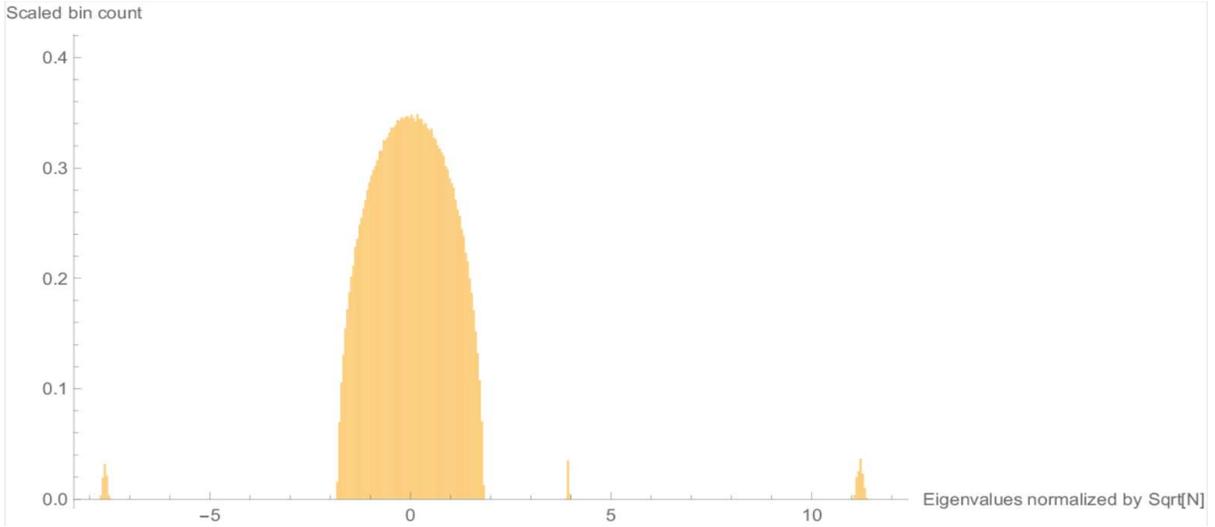}
\caption{\label{figexampleoftheblip} A histogram of the normalized eigenvalue distribution on a probability density scale for $500 \times 500$ $(6, W)$-checkerboard real matrices with $W = (1,-2,-2,3,3,3)$ after 500 trials.}
\end{center}
\end{figure}

As in \cite{BCDHMSTPY}, the blip eigenvalues of order $N$ prevent us from directly using the method of moments (for large $m$, the contribution from these eigenvalues dwarfs that from the bulk). We use following result (see \cite{Tao}) to bypass the complications presented by the small number of blip eigenvalues.

\begin{restatable}{prop}{stabilityESDrank}
\label{stabilityESDrank}
(\cite{Tao}) Let $\{\mathcal{A}_N\}_{N \in \N}$ be a sequence of random Hermitian matrix ensembles such that $\{\nu_{\mathcal{A}_N,N}\}_{N \in \N}$ converges weakly almost surely to a limit $\nu$. Let $\{\tilde{\mathcal{A}}_N\}_{N \in \N}$ be another sequence of random matrix ensembles such that $\frac{1}{N}\rank(\tilde{\mathcal{A}}_N)$ converges almost surely to zero. Then $\{\nu_{\mathcal{A}_N+\tilde{\mathcal{A}}_N,N}\}_{N\in\N}$ converges weakly almost surely to $\nu$.
\end{restatable}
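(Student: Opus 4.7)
The plan is to prove the stronger quantitative statement that the Kolmogorov distance between the empirical cumulative distribution functions (CDFs) of a Hermitian matrix and any Hermitian perturbation of it is controlled by the rank of the perturbation, and then to deduce the weak almost-sure convergence from this.

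Write $F_M$ for the CDF associated with $\nu_{M,N}$. The main lemma to establish is the rank inequality
\[
\sup_{x\in\R} \bigl|F_{A+B}(x) - F_A(x)\bigr| \;\leq\; \frac{\rank(B)}{N}
\]
for $N\times N$ Hermitian matrices $A$ and $B$. I would prove this via Weyl interlacing for additive perturbations: decomposing $B$ into its positive and negative parts, each of rank at most $\rank(B)$, one obtains
\[
\lambda_{j+\rank(B)}(A+B) \;\leq\; \lambda_j(A) \;\leq\; \lambda_{j-\rank(B)}(A+B),
\]
where eigenvalues are listed in decreasing order (with the obvious boundary convention). These inequalities imply that for any $x\in\R$ the counts $\#\{i:\lambda_i(A)\le x\}$ and $\#\{i:\lambda_i(A+B)\le x\}$ differ by at most $\rank(B)$, from which the displayed Kolmogorov bound follows. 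Since rescaling eigenvalues by $1/\sqrt{N}$ only rescales the horizontal axis, the same bound holds for the normalized CDFs used in the statement.

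Granted the rank inequality, the conclusion follows by soft arguments. Let $x$ be any continuity point of $F_\nu$. The triangle inequality gives
\[
\bigl|F_{\mathcal{A}_N + \tilde{\mathcal{A}}_N}(x) - F_\nu(x)\bigr| \;\leq\; \bigl|F_{\mathcal{A}_N}(x) - F_\nu(x)\bigr| \;+\; \frac{\rank(\tilde{\mathcal{A}}_N)}{N}.
\]
The first term tends to zero almost surely because $\{\nu_{\mathcal{A}_N,N}\}$ converges weakly almost surely to $\nu$ and $x$ is a continuity point of $F_\nu$; the second tends to zero almost surely by hypothesis. Since $F_\nu$ has at most countably many discontinuities, picking a countable dense set $D$ of its continuity points and taking a union bound over $D$ produces a single probability-one event on which $F_{\mathcal{A}_N + \tilde{\mathcal{A}}_N}(x) \to F_\nu(x)$ for every $x \in D$. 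Monotonicity of CDFs then upgrades this to pointwise convergence at every continuity point of $F_\nu$, which is equivalent to weak convergence.

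The only genuine step is the rank inequality; the rest is measure-theoretic bookkeeping. I expect no serious obstacle, as the rank inequality is standard Hermitian linear algebra (documented in the cited reference). The only subtlety worth handling carefully is the almost-sure qualifier: the hypothesis provides two separate probability-one events (one for weak convergence of $\nu_{\mathcal{A}_N,N}$, one for $\rank(\tilde{\mathcal{A}}_N)/N \to 0$), and the conclusion is obtained on their intersection, which still has probability one.
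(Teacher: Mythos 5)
Your proposal is correct; note that the paper does not prove this proposition at all but simply quotes it from the cited reference (Tao's notes on the semicircular law), where the argument is precisely the one you give: the rank inequality $\sup_x |F_{A+B}(x)-F_A(x)| \leq \rank(B)/N$ via Weyl interlacing, followed by the standard reduction of weak convergence to pointwise convergence of CDFs at a countable dense set of continuity points. So your write-up is essentially the standard proof of the cited result, and the only cosmetic point is that the final step is an intersection of countably many probability-one events rather than a union bound.
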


Taking $\tilde{\mathcal{A}}_N$ to be the fixed matrix with entries $m_{ij}=w_u$ whenever $i\equiv j \equiv u \pmod{k}$ and zero otherwise, we have that the limiting spectral distribution of the $(k,W)$-checkerboard ensemble is the same as the limiting spectral distribution of the ensemble with $W=\mathbf{0}$, which does not have the $k$ large blip eigenvalues 
. This overcomes the issue of diverging moments.

\begin{restatable}{theorem}{thmbulklim}\label{thm_main_bulk}
Let $\{A_N\}_{N \in \N}$ be a sequence of $N \times N$ $(k,W)$-checkerboard matrices, and let $\nu_{A_N}$ denote the empirical spectral measure, then $\nu_{A_N}$ converges weakly almost surely to the Wigner semicircle measure $\sigma_R$ with radius
\begin{equation}
R\ =\ 2\sqrt{1-1/k}.
\end{equation}
\end{restatable}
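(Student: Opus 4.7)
The plan is to reduce to the case $W = \mathbf{0}$ and then run a modification of the standard Wigner moment argument, tracking the restriction that the matrix has forced zeros on the checkerboard diagonals.

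First, I would invoke Proposition \ref{stabilityESDrank}. Write $A_N = A_N^{(0)} + D_N$, where $A_N^{(0)}$ is the $(k, \mathbf{0})$-checkerboard matrix obtained by zeroing out all the diagonal-pattern entries, and $D_N$ is the deterministic matrix whose only nonzero entries are the $w_u$'s in the positions $i \equiv j \equiv u \pmod k$. Since $D_N$ is a fixed matrix whose nonzero entries are supported on $k$ rank-one blocks (one for each residue class), $\rank(D_N) \leq k$, so $\frac{1}{N}\rank(D_N) \to 0$. Thus by Proposition \ref{stabilityESDrank} it suffices to prove the theorem for $A_N^{(0)}$, whose entries are i.i.d.\ mean zero variance one off the checkerboard diagonals and identically zero on them.

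Second, I would use the method of moments. Expand
\begin{equation}
\E\!\left[\frac{1}{N}\tr\!\left(\frac{A_N^{(0)}}{\sqrt{N}}\right)^{\!m}\right] \; = \; \frac{1}{N^{m/2+1}} \sum_{i_1,\ldots,i_m} \E\!\left[m_{i_1 i_2} m_{i_2 i_3} \cdots m_{i_m i_1}\right],
\end{equation}
where each factor vanishes if its two indices are congruent mod $k$. The standard Wigner combinatorics apply: odd $m$ contribute $0$, and for $m = 2n$ the leading term in $N$ comes from closed walks that pair up into $n$ distinct edges each traversed exactly twice, corresponding to the $C_n$ non-crossing pair partitions. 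Walks with fewer than $n+1$ distinct vertices are $O(1/N)$ smaller and can be discarded as in the Wigner proof.

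The main obstacle, and the new ingredient, is the residue-class restriction. For a non-crossing pairing, the $n+1$ distinct indices $i_1,\ldots,i_{n+1}$ form the vertex set of a plane tree with $n$ edges; the product of entries is nonzero only if each edge joins indices in distinct residue classes mod $k$. I would count these contributions by first choosing a residue $r_v \in \{1,\ldots,k\}$ for each vertex $v$ with $r_v \neq r_{v'}$ whenever $v, v'$ are adjacent (this is a proper $k$-coloring of a tree, giving $k(k-1)^n$ colorings), and then choosing an actual index in each residue class (approximately $N/k$ choices each, independently, for large $N$). This yields
\begin{equation}
k(k-1)^n \left(\frac{N}{k}\right)^{\!n+1}(1+o(1)) \; = \; N^{n+1}\left(1-\tfrac{1}{k}\right)^{\!n}(1+o(1))
\end{equation}
valid index assignments per non-crossing pairing, so after dividing by $N^{n+1}$ each pairing contributes $(1-1/k)^n$ in the limit. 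Summing over the $C_n$ non-crossing pairings gives the $2n$-th moment $C_n(1-1/k)^n$, which is precisely the $2n$-th moment of the semicircle of radius $R = 2\sqrt{1-1/k}$.

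Finally, I would promote convergence of expected moments to weak almost sure convergence by the usual two-step argument: a standard variance bound $\V(\frac{1}{N}\tr((A_N^{(0)}/\sqrt{N})^m)) = O(1/N^2)$ (obtained by the same combinatorial expansion applied to pairs of traces, where connected diagrams yield an extra $N^{-2}$), combined with the Borel--Cantelli lemma to get almost sure convergence of all moments, and Carleman's condition together with the fact that the semicircle is determined by its moments to upgrade moment convergence to weak convergence of measures.
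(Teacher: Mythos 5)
Your proposal is correct and follows essentially the same route as the paper: reduce to the $W = \mathbf{0}$ case via Proposition \ref{stabilityESDrank} (noting $\rank D_N \le k$), compute the moments of the $(k,\mathbf{0})$-checkerboard by the Wigner tree/non-crossing-pairing argument with the extra residue-class restriction giving the factor $(1-1/k)^n$ via proper $k$-colorings of trees, and finish with the standard variance/Borel--Cantelli/Carleman upgrade to almost-sure weak convergence. The only difference is presentational: the paper delegates both the $(k,\mathbf{0})$ moment computation (Lemma \ref{lem:avgmomentsbulk1}) and the almost-sure convergence argument to \cite{BCDHMSTPY}, whereas you spell them out self-containedly.
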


The proof is by standard combinatorial arguments. We give the details in \S\ref{bulk}.

Similar to the previous checkerboard paper \cite{BCDHMSTPY}, each blip may be thought of as deviations about the trivial eigenvalues. Instead of having just one blip as in \cite{BCDHMSTPY}, we now have many different blips. A blip containing $k_i > 1$ eigenvalues has the same distribution as the eigenvalues of the $k_i \times k_i$ hollow Gaussian Orthogonal Ensemble (see Definition \ref{def hollow GOE}); when $k_i=1$ the blip has the distribution of a dirac delta function.

We need to define a weighted blip spectral measure which takes into account only the eigenvalues of one blip. Thus we not only need to get rid of the interference from the bulk, we also need to avoid the interference from the other blips. In order to facilitate the use of eigenvalue trace lemma,  similar to \cite{BCDHMSTPY}, we are led to use a polynomial weighting function and we use a sequence of polynomials of degree tending to infinity as the matrix size $N \rightarrow \infty$ so that in the limit we mimic a smooth cutoff function. Specifically, let
\begin{equation}
f_i^{2n}(x) \;:=\;\left(\frac{x(2-x)\prod_{w_{j}\neq w_i}(x-\frac{w_j}{w_i})(2-x-\frac{w_j}{w_i})}{\prod_{w_{j}\neq w_i}(1-\frac{w_j}{w_i})^2}\right)^{2n}.
\end{equation}
Thus we alter the standard empirical spectral measure in the following way to capture the blip. 
For example, when $k = 3$ and $W = (1,1,2)$, we use the polynomial $f_3^{2n}(x) = \left(x(2-x)(2x-1)(3-2x)\right)^{2n}$ to study the blip around $\frac{2N}{3}$. Figure \ref{fig:weightfn} gives a plot of the polynomial $f_3^{200}(x)$. We can see that the weight function $f_3^{2n}(x)$ is large when $|x-1| > \frac{\sqrt{5}}{2}$, but this would not cause a problem since almost surely there will be no scaled eigenvalues in that region.
\begin{figure}[H]
\begin{center}
\includegraphics[width = 9cm, height = 5 cm]{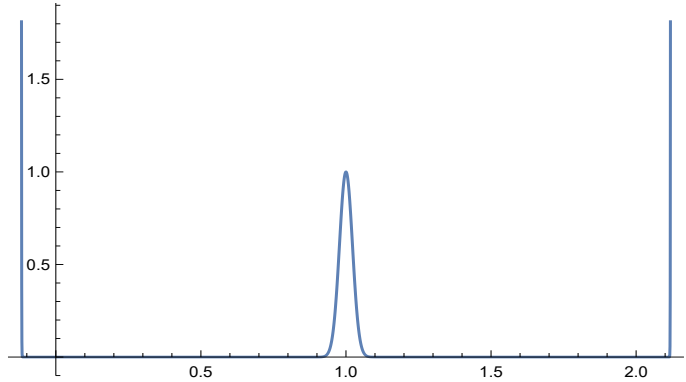}
\caption{\label{fig:weightfn} Weight function with $n = 100$ to study the blip around $\frac{2N}{3}$ when $k=3$ and $ W = (1,1,2)$.}
\end{center}
\end{figure}

\begin{defi}\label{def:blipmeasi}
Given $k\in\mathbb{N}$ and a $k$-tuple of real numbers $W = (w_1,\dots, w_k)$, the \textbf{empirical blip spectral measure} associated to an $N \times N$ $(k,W)$-checkerboard matrix $A_N$ around $Nw_i/k\neq 0$ is
\begin{equation}\label{eqn:blipmeasold}
\mu_{A_N,i}(x) \; :=\;\frac{1}{k_i} \sum_{\lambda \text{ \rm an eigenvalue of } A}  f_i^{2n}\left(\frac{k\lambda}{w_i N}\right)\delta\left(x-\left(\lambda-\frac{w_iN}{k}\right)\right),
\end{equation} where $k_i$ is the number of $w_i$'s in $(w_1,\dots, w_k)$, and $n(N)$ is a function satisfying $\lim\limits_{N\to\infty} n(N) = \infty$ and $n(N) = O(\log \log N)$.
\end{defi}

\begin{rek}\label{rmk:long_blip_justification}
The actual choice of weight functions should not change the empirical blip spectral measure in the limit. It will be used in the proof that the weight polynomial $f_i^{2n}(x)$ has a critical point at $1$ with $f_i^{2n}(1) = 1$ and has zeroes of order $2n$ at $0$ and at all $w_j/w_1$ with $w_j\neq w_1$.
Heuristically, because the fluctuation of eigenvalues in each regime is of order $\sqrt{N}$, we have $f_i^{2n}\left(\frac{k\lambda}{w_i N}\right) \approx 1$ if $\lambda$ is in the blip around $Nw_i/k$, and $f_i^{2n}\left(\frac{k\lambda}{w_i N}\right) \approx 0$ if $\lambda$ is in the bulk or in the blip other than $Nw_i/k$. More specifically, \begin{align}
    f_i^{2n}\left(\frac{k\lambda}{w_i N}\right) \; = \; \begin{cases}
    O\left(\frac{\log{N}}{N^n}\right) & \text{ {\rm if} } \lambda \text{ {\rm is} } O\left(\sqrt{N}\right) \text{ {\rm or} }  \frac{Nw_j}{k} +  O\left(\sqrt{N}\right) \text{ {\rm with} } w_j\neq w_i, \\
    1 + O\left(\frac{\log{N}}{N^{2n}}\right) & \text{ {\rm if} }  \lambda \text{ {\rm is} } \frac{Nw_i}{k} +  O\left(\sqrt{N}\right).
    \end{cases}
\end{align}
\end{rek}

As in \cite{BCDHMSTPY}, we use the method of moments to reduce to a combinatorial problem and relate the expected moments of the empirical blip measure around $Nw_i/k$ to those of the $k_i\times k_i$ hollow GOE. One remarkable observation is that the values of the constants $w_j\neq w_i$ do not affect the blip eigenvalues around $Nw_i/k$. 

For example, if we choose $W_1 = (1,-2,-2,3,3,3)$ and $W_2 = (0,0,0,3,3,3)$, then numerically we can observe that the histograms (Figure \ref{fig:blip122333} and \ref{fig:blip000333}) of the eigenvalues of the $500\times 500$ $(6,W_1)$-checkerboard matrices and $(6, W_2)$-checkerboard matrices at the blip around $\frac{1}{\sqrt{N}}\frac{N\cdot 3}{6} = \frac{1}{\sqrt{500}}\frac{500\cdot 3}{6}  \approx 11.2$ after normalization have approximately the same shape. 

\begin{figure}[H]
\begin{center}
\includegraphics[width = 16cm, height = 7 cm]{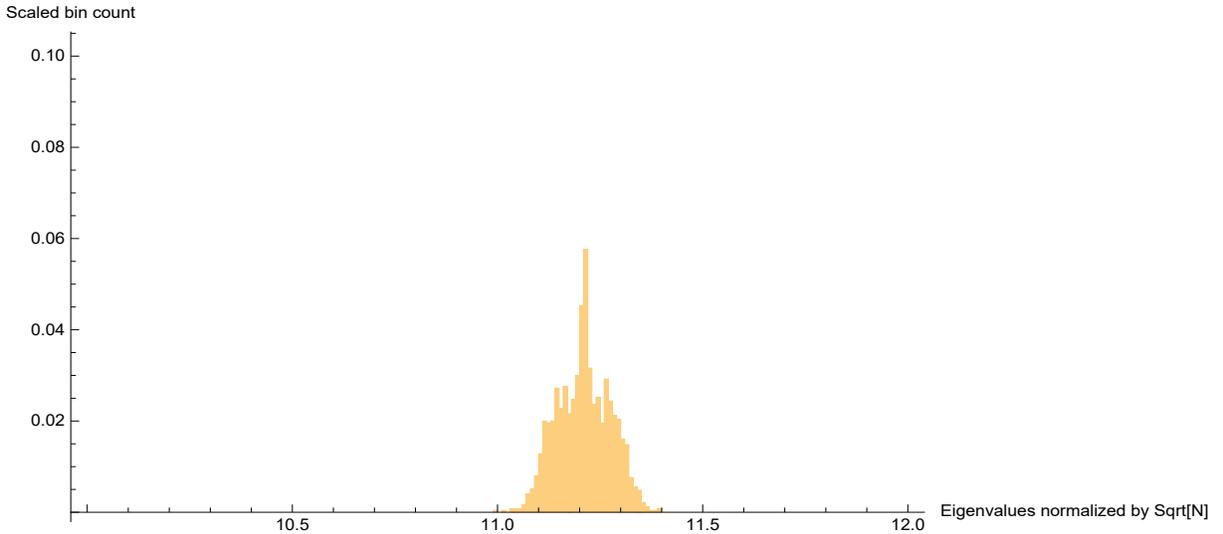}
\caption{\label{fig:blip122333} A histogram of the largest blip on a probability density scale for $500 \times 500$ $(6, W_1)$-checkerboard real matrices with $W_1 = (1,-2,-2,3,3,3)$ after 500 trials.}
\end{center}
\end{figure}

\begin{figure}[H]
\begin{center}
\includegraphics[width = 16cm, height = 7 cm]{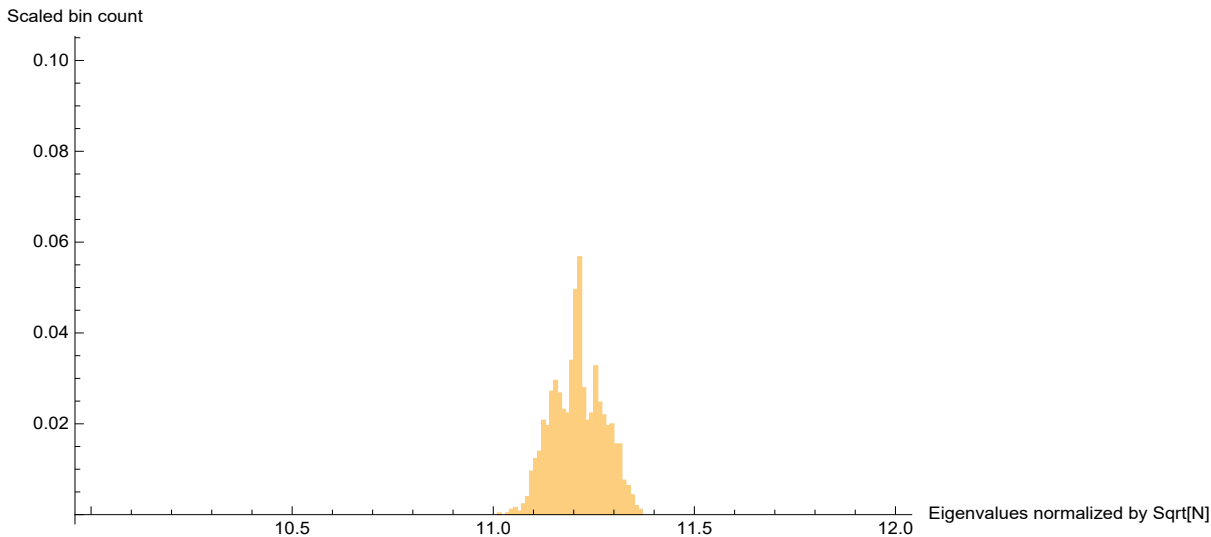}
\caption{\label{fig:blip000333} A histogram of the largest blip on a probability density scale for $500 \times 500$ $(6, W_2)$-checkerboard real matrices with $W_2 = (0,0,0,3,3,3)$ after 500 trials.}
\end{center}
\end{figure}

In particular, when there is only one eigenvalue in a blip, we obtain the following.

\begin{thm}
Fix $k\in\mathbb{N}$ and a $k$-tuple of real numbers $W = (w_1,\dots, w_k)$ where $w_i\neq 0$ and there is exactly one $w_i$ in $W$. Let $\{A_N\}_{N\in\mathbb{N}}$ be a sequence of $(k, W)$-checkerboard matrices. Then the associated empirical blip spectral measure $\mu_{A_N,i}$ around $Nw_i/k$ converges weakly to the Dirac delta distribution centered at $\frac{Nw_i}{k} + \frac{k-1}{w_i}$.
\end{thm}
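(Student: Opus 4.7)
The plan is to combine matrix perturbation theory with the localizing property of the polynomial weight $f_i^{2n}$, so as to reduce the problem to tracking a single deterministic eigenvalue. Decompose $A_N = M_W + B_N$, where $M_W$ is the fixed matrix with $(M_W)_{ab}=w_u$ when $a\equiv b\equiv u\pmod{k}$ and $0$ elsewhere, and $B_N$ is the random part. A direct check shows that $M_W$ has rank $k$ with eigenpairs $(Nw_u/k,\,v_u)$, where $v_u\in\R^N$ has entries $\sqrt{k/N}$ at coordinates $a\equiv u\pmod{k}$ and $0$ elsewhere; on the orthogonal complement $V^\perp$ of $V:=\mathrm{span}(v_1,\dots,v_k)$ the matrix $M_W$ vanishes. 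Since $w_i\neq 0$ appears exactly once in $W$, $Nw_i/k$ is a simple eigenvalue of $M_W$ with spectral gap $\Theta(N)$, and the standard Wigner tail bound $\|B_N\|=O(\sqrt{N}\log N)$ almost surely guarantees that for all large $N$ there is a unique eigenvalue $\lambda_N$ of $A_N$ in a $\sqrt{N}\log N$-neighborhood of $Nw_i/k$.

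By Remark~\ref{rmk:long_blip_justification}, the weight $f_i^{2n}(k\lambda/(w_iN))$ is $1+o(1)$ at $\lambda_N$ and $o(1)$ at every other eigenvalue. Since $k_i=1$ and $n(N)\to\infty$, for any fixed $m$ we would therefore have
\begin{align*}
\int x^m\,d\mu_{A_N,i}(x) \;=\; \left(\lambda_N - \tfrac{Nw_i}{k}\right)^m + o(1),
\end{align*}
so by the method of moments it suffices to show $\lambda_N - Nw_i/k \to (k-1)/w_i$ almost surely.

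To obtain this, I would apply second-order Rayleigh--Schr\"odinger perturbation theory to the simple eigenvalue $Nw_i/k$ of $M_W$ perturbed by $B_N$. The first-order correction $\langle v_i, B_N v_i\rangle$ vanishes, because $B_N$ is identically zero on the support of $v_iv_i^T$. The second-order correction decomposes as
\begin{align*}
\sum_{j\neq i}\frac{\langle v_i, B_N v_j\rangle^2}{N(w_i-w_j)/k} \;+\; \frac{k}{Nw_i}\,\|P_{V^\perp} B_N v_i\|^2.
\end{align*}
Each summand in the first sum is $O(1/N)$, since the numerator is $O(1)$ and the denominator is $\Theta(N)$. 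For the second term, the coordinates of $B_N v_i$ outside the $V$-directions are sums of $N/k$ independent, mean-zero, unit-variance variables indexed by pairwise disjoint sets, whence $\E\|P_{V^\perp} B_N v_i\|^2 = (k-1)N/k$ with variance $O(N)$; Chebyshev plus Borel--Cantelli (using the finite higher moments of the $a_{ij}$) gives $\|P_{V^\perp} B_N v_i\|^2 = (k-1)N/k + O(N^{1/2+\eps})$ almost surely. Dividing by $Nw_i/k$ produces the advertised limit $(k-1)/w_i$.

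The main technical obstacle will be controlling the cumulative contribution of third- and higher-order terms in the perturbation series so they shift $\lambda_N$ by only $o(1)$. I would handle this via the Schur-complement equation $\det\bigl(A_{VV} - \lambda I - B_{VV^\perp}(B_{V^\perp V^\perp}-\lambda I)^{-1}B_{V^\perp V}\bigr)=0$, expanding $(B_{V^\perp V^\perp}-\lambda I)^{-1}$ as a Neumann series in $\|B_N\|/\lambda = O(N^{-1/2})$ and bounding the tail with the operator-norm estimate on $B_N$ together with the variance estimates above. Once this is in hand, combining with the weight-function localization yields $\int x^m\,d\mu_{A_N,i}\to ((k-1)/w_i)^m$ almost surely for every $m$, and the method of moments gives weak almost sure convergence of $\mu_{A_N,i}$ to the Dirac mass at the stated point.
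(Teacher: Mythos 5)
Your proposal is correct in outline but takes a genuinely different route from the paper. The paper obtains this theorem as a corollary of its general moment machinery: Proposition \ref{prop:expmoment} gives $\lim_{N}\E[\mu_{A_N,1}^{(m)}]=\frac{1}{k_1}\sum_{m_1}\binom{m}{m_1}\left(\frac{k-1}{w_1}\right)^{m-m_1}\mathbb{E}_{k_1}[\Tr(B^{m_1})]$, which for $k_1=1$ collapses to $\left(\frac{k-1}{w_1}\right)^m$ because the $1\times 1$ hollow GOE is the zero matrix, and \S\ref{sec:convergence} then shows $\var[\mu_{A_N,1}^{(m)}]\to 0$ when $k_1=1$ by ruling out crossover matchings; the method of moments finishes. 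You instead locate the blip eigenvalue directly by perturbing the simple eigenvalue $Nw_i/k$ of the fixed matrix, identifying the shift $(k-1)/w_i$ as the second-order correction $\frac{k}{Nw_i}\|P_{V^\perp}B_Nv_i\|^2\approx\frac{k}{Nw_i}\cdot\frac{(k-1)N}{k}$. This is a legitimate and arguably more illuminating argument: it explains where $(k-1)/w_i$ comes from, it gives almost-sure control of the actual eigenvalue location (stronger than convergence of the weighted measure), and the weight polynomial enters only to verify that the other $N-1$ eigenvalues contribute $o(1)$ to each moment of $\mu_{A_N,i}$ (Remark \ref{rmk:long_blip_justification} together with $n(N)\to\infty$, $n(N)=O(\log\log N)$ suffices). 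The trade-offs: your route leans on the operator-norm bound $\|B_N\|_{\mathrm{op}}=O(N^{1/2+\epsilon})$, which the paper already supplies in \S\ref{sec:bulk} since $B_N$ is exactly a $(k,0)$-checkerboard matrix, and on making the tail of the perturbation series rigorous --- the estimate $\|B_N\|^3/\mathrm{gap}^2=O(N^{-1/2+3\epsilon})=o(1)$ does close this as you anticipate --- whereas the paper's combinatorial route, heavier here, is the one that extends uniformly to blips with $k_i>1$, where the limit is a hollow GOE rather than a point mass. Two small imprecisions to fix in a write-up: $\E\|P_{V^\perp}B_Nv_i\|^2$ equals $(k-1)N/k$ only up to an $O(1)$ correction from projecting off the $k$ directions $v_j$ (harmless after dividing by $Nw_i/k$), and almost-sure concentration of $\|P_{V^\perp}B_Nv_i\|^2$ at scale $o(N)$ requires a fourth-moment Chebyshev bound yielding summable $O(N^{-2})$ tails, not the second-moment bound alone.
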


Thus, when $k_i = 1$, we expect an eigenvalue of magnitude exactly $\frac{Nw_i}{k} + \frac{k-1}{w_i}$ as $N\to\infty$. In general, when $k_i > 1$, the empirical blip spectral measure of one matrix $A_N$ around $Nw_i/k$ no longer converges to the expected value, as the variances of the moments do not necessarily converge to zero as $N\to\infty$. Thus, we follow \cite{BCDHMSTPY} to modify the moment convergence theorem and average over the eigenvalues of multiple independent matrices.

\begin{defi}\label{def_ave_blip_fin}
Fix $k\in\mathbb{N}$, a $k$-tuple of real numbers $W = (w_1,\dots, w_k)$, and a function $g: \N \rightarrow \N$. The \textbf{averaged empirical blip spectral measure} around $Nw_i/k$ associated to a $g(N)$-tuple of $N \times N$ $(k,W)$-checkerboard matrices $(A_N^{(1)},A_N^{(2)},\dots,A_N^{(g(N))})$ is
\begin{equation}
\mu_{i,g,A_N^{(1)},A_N^{(2)},\dots,A_N^{(g(N))}}\ :=\ \frac{1}{g(N)}\sum_{j=1}^{g(N)} \mu_{A_N^{(j)},i}.
\end{equation}
\end{defi}

\begin{thm}\label{thm_main_goe}
Fix $k\in\mathbb{N}$, a $k$-tuple of real numbers $W = (w_1,\dots, w_k)$. Let $g: \N \rightarrow \N$ be such that there exists a $\delta>0$ for which $g(N) \gg N^\delta$. Let $A^{(j)}=\{A^{(j)}_N\}_{N \in \N}$ be sequences of fixed $N \times N$ matrices, and let $\overline{A}=\{A^{(j)}\}_{j \in \N}$ be a sequence of such sequences.
Then, as $N\to\infty$, the averaged empirical blip spectral measures $\mu_{i, g,A_N^{(1)},A_N^{(2)},\dots,A_N^{(g(N))}}$ around $Nw_i/k$ of the $(k,W)$-checkerboard ensemble over $\R$ converge weakly almost-surely to the measure with moments equal to the expected moments of the standard empirical spectral measure of the $k_i\times k_i$ hollow Gaussian Orthogonal Ensemble, where $k_i$ is the number of $w_i$ in $W$.
\end{thm}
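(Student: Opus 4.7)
The plan is to prove Theorem \ref{thm_main_goe} by the method of moments combined with a variance-and-concentration argument, extending the combinatorial framework of \cite{BCDHMSTPY} to accommodate multiple blips.

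\textbf{Setup via the trace lemma.} First I reduce each moment to a trace. By the eigenvalue trace lemma applied to Definition \ref{def:blipmeasi},
\begin{equation*}
\int x^m \, d\mu_{A_N,i}(x) \;=\; \frac{1}{k_i}\,\tr\!\left( f_i^{2n}\!\Bigl(\tfrac{k}{w_iN}A_N\Bigr)\bigl(A_N - \tfrac{w_iN}{k}I\bigr)^{\!m}\right),
\end{equation*}
so the $m$-th moment of the averaged empirical blip measure equals the average, over the $g(N)$ independent matrices, of this trace. Taking expectations and expanding both $f_i^{2n}$ and $(A_N-w_iN/k\cdot I)^m$ term by term reduces the computation to a polynomial sum over closed walks $i_0\to i_1\to\cdots\to i_L\to i_0$ on $\{1,\dots,N\}$, where each step contributes either a deterministic factor (one of the constants $w_u$ on a fixed-residue edge, or a shift contribution from $-w_iN/k\cdot I$) or a random variable $a_{i_a i_{a+1}}$.

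\textbf{Combinatorial analysis.} Next I classify walks by their index profile and by which random edges are traversed an even number of times. Using independence and the higher-moment hypotheses, only walks in which every random edge appears at least twice can contribute at leading order, as in the Wigner reduction. The weight polynomial $f_i^{2n}$ now plays the decisive role: by construction each factor vanishes to order $2n$ at $k\lambda/(w_iN)$ equal to $0$ or $w_j/w_i$ for $w_j\neq w_i$, so walks whose spectral support would lie outside the blip at $Nw_i/k$ are suppressed by a factor $O((\log N/N)^{n})$ per the estimate in Remark \ref{rmk:long_blip_justification}. After all cancellations, the surviving walks live entirely on the $k_i$ residue classes corresponding to $w_i$; once the shift $-w_iN/k\cdot I$ is absorbed, they count exactly the same closed walks that produce the expected moments of the $k_i\times k_i$ hollow GOE. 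The constants $w_j$ with $j\neq i$ drop out of the limiting moments, which matches the numerical observation in Figures \ref{fig:blip122333} and \ref{fig:blip000333}.

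\textbf{Almost-sure convergence via averaging.} With the expected moments identified, I pass to almost sure statements by a Chebyshev-plus-Borel-Cantelli argument. Independence of the $g(N)$ matrices and finite higher moments of the $a_{ij}$'s yield
\begin{equation*}
\var\!\left(\int x^m\, d\mu_{i,g,A_N^{(1)},\dots,A_N^{(g(N))}}(x)\right) \;\ll\; \frac{C_m}{g(N)} \;\ll\; \frac{C_m}{N^{\delta}},
\end{equation*}
so Chebyshev and Borel-Cantelli along a suitable subsequence give almost-sure convergence of each moment. Since the hollow GOE moments grow slowly enough to satisfy Carleman's condition (and the $k_i=1$ case collapses to a Dirac mass, handled separately), this upgrades to weak almost-sure convergence of the measures themselves.

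\textbf{Main obstacle.} The hard part is the combinatorial bookkeeping in the second step. Each extra factor $(x - w_j/w_i)(2-x-w_j/w_i)$ in $f_i^{2n}$ produces cross-terms coupling the target blip to the other constants $w_j$, and I must show that after summation over walks these all cancel or are absorbed into subleading error. This significantly generalizes the single-blip analysis of \cite{BCDHMSTPY} because the number of combinatorial cases grows with the number of distinct values in $W$. Controlling these off-target contributions at order $O((\log N/N)^n)$, while still matching only the on-target walks to the hollow GOE walk counts, is where the growth condition $n(N)=O(\log\log N)$ in Definition \ref{def:blipmeasi} becomes essential: it must be large enough for $N^{-n}$ to beat polynomial combinatorial factors, yet small enough that the expansion of $f_i^{2n}$ stays manageable.
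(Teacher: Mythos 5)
Your overall architecture (trace lemma, combinatorial reduction of the expected moments, then averaging over $g(N)$ independent matrices and Chebyshev--Borel--Cantelli) matches the paper's, but two of your steps have genuine gaps. First, the off-blip contributions are not handled by a size estimate. You invoke the $O((\log N/N)^n)$ suppression of Remark \ref{rmk:long_blip_justification}, but that remark is a heuristic about where the eigenvalues sit; inside the expansion of $\E[\Tr(A_N^{\alpha+i})]$ the congruence configurations containing some $w_j\neq w_i$ produce terms carrying full powers of $N$, individually as large as the main term, and no decay estimate will beat them. The paper's mechanism is an exact algebraic cancellation: Lemma \ref{lem:sumform} shows each such configuration contributes a sum of terms $p(\alpha+i)\,w_j^{\alpha+i-\gamma}(N/k)^{\alpha+i-t}$ with $\deg p\le \beta-s+1\le m-1$ whenever two distinct constants appear, and Lemma \ref{lem_zero}, applied to the order-$2n$ zero of $f_i^{2n}$ at $w_j/w_i$ (and to the order-$m$ zero of $(x-1)^m$ at $1$ for the on-blip terms), makes the weighted sum over $\alpha$ and $i$ vanish identically, as in \eqref{eqn:eqn1} and \eqref{eqn:eqn2}. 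Relatedly, you have the role of $n(N)$ partly backwards: the binding constraint is that $n$ be \emph{small} ($n=o(\log N)$), because the terms losing more than $m$ degrees of freedom accumulate to $c^{n(N)}/N$ as in \eqref{equation 3.5}; the lower bound $n\to\infty$ is only needed so that $2n$ exceeds the fixed degree $m$ of the polynomials being annihilated.

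Second, your almost-sure convergence step does not close. The variance bound $\var(\mu^{(m)}_{i,g,\dots})\ll C_m/g(N)\ll C_m N^{-\delta}$ is correct (the single-matrix variance is $O_m(1)$ but, by Example \ref{example}, does \emph{not} tend to $0$ when $k_i>1$, which is why averaging is needed at all), but $\sum_N N^{-\delta}$ diverges for $\delta\le 1$, so Chebyshev plus Borel--Cantelli fails on the full sequence. The "suitable subsequence" escape is not available here: the matrices $A_N^{(j)}$ at different $N$ are independent, so there is no interlacing or monotone coupling with which to interpolate between subsequence points. The paper's fix is Lemma \ref{lem_moments_of_moments}: all centered moments of the single-matrix statistic $X_{m,N,i}$ are $O_{m,r}(1)$, whence the $2t$-th centered moment of the $g(N)$-average is $O_{m,t}(g(N)^{-t})=O(N^{-t\delta})$; choosing $t>1/\delta$ yields a summable Chebyshev tail and Borel--Cantelli applies to the whole sequence. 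Establishing that higher-moment bound (which again requires the multi-blip cancellation machinery of Lemmas \ref{lem:sumform} and \ref{lem_zero} applied to products of $l$ traces) is an essential ingredient missing from your proposal.
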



In conclusion, we can construct an expanding family to have blips of any desired finite size at any sequence of positions after normalization. For example, we give an explicit construction of an ensemble whose limiting spectral measure has a semi-circle bulk and blips at all the Fibonacci numbers in Appendix \ref{sec:fibonacci}. Moreover, we extend \cite{BCDHMSTPY} by showing that the averaged empirical blip spectral measure around $\frac{Nw_i}{k}$ converges to a $k_i\times k_i$ hollow Gaussian with its mean $\frac{k-1}{w_i}$ independent of the choice of all the constants $w_j\neq w_i$. This measn that the distribution of different blips don't interfere with each other. When the blip has size $k_i = 1$, we get weak convergence of empirical blip spectral measure around $\frac{Nw_i}{k}$ to a Dirac Delta distribtion.


The paper is organized as follows. In \S\ref{sec:bulk} we prove our claims concerning the eigenvalues in the bulk, then turn to the blip spectral measure in  \S\ref{sec:blip}. We then prove results on the convergence to  the limiting spectral measure in \S\ref{sec:convergence}.


\section{The Bulk Spectral Measure and the split behavior}\label{sec:bulk}

\subsection{Bulk Measure}\label{bulk}
In this section we establish that the limiting bulk measure for the generalized $k$-checkerboard matrices follows a semi-circle. We denote by $\mu^{(m)}$ the $m$\textsuperscript{th} moment of the measure $\mu$.

The commonly used method of moments cannot be directly applied to our ensemble because as proved in \cite{BCDHMSTPY} the limiting expected moments of the empirical spectral measure do not exist. As remarked in the introduction, we overcome this difficulty by connecting the limiting spectral measure of the generalized checkerboard ensemble with that of $N \times N (k,0)$ checkerboard ensembles through Proposition \ref{stabilityESDrank}. \cite{BCDHMSTPY} then use the method of moments to establish the result for $(k,0)$-checkerboard matrices, using the eigenvalue trace lemma and combinatorics to establish convergence of the expected moments. The remaining arguments establishing almost sure weak convergence are standard (see for example Appendix A of \cite{BCDHMSTPY}); we state the result below.

\begin{lemma}\label{lem:avgmomentsbulk1}
The expected moments of the bulk empirical spectral measure taken over $A_N$ in the $N \times N$ $(k,0)$-checkerboard ensemble converge to the moments of the Wigner semicircle distribution $\sigma_R$ as defined in \eqref{wignersemi} with radius $R=2\sqrt{1-1/k}$ and
\begin{equation}
\E\left[\nu^{(l)}_{A_N}\right] \rightarrow \sigma_R^{(l)}
\end{equation}
as $N \rightarrow \infty$.
\end{lemma}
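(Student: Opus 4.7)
The plan is to apply the method of moments via the eigenvalue trace lemma, exactly as in the standard Wigner proof, but with a careful accounting for the checkerboard restriction. Writing $l = 2m$ (odd moments vanish by the mean-zero assumption, matching the symmetry of the semicircle), we have
\begin{equation*}
\E\left[\nu^{(2m)}_{A_N}\right]
\; = \; \frac{1}{N^{m+1}} \sum_{i_1,\dots,i_{2m}} \E\left[m_{i_1 i_2} m_{i_2 i_3} \cdots m_{i_{2m} i_1}\right],
\end{equation*}
where the sum is over cyclic sequences of indices in $\{1,\dots,N\}$. For $(k,0)$-checkerboard matrices the summand is zero unless every edge $\{i_t, i_{t+1}\}$ of the closed walk satisfies $i_t \not\equiv i_{t+1} \pmod{k}$ and is traversed an even number of times. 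I would then group the nonvanishing configurations according to the isomorphism type of the associated closed walk.

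The leading-order contributions come from closed walks in which every edge is traversed exactly twice and the underlying graph is a tree on $m+1$ distinct vertices; such walks are in bijection with plane trees on $m$ edges and are therefore counted by the Catalan number $C_m$. For each such walk shape I have to count index labelings satisfying the residue constraint. Here is the one new ingredient compared to the standard Wigner argument: adjacent vertices in the tree must receive different residues mod $k$, so the number of admissible residue assignments is the chromatic polynomial of a tree on $m+1$ vertices evaluated at $k$, namely $k(k-1)^m$. Since (up to $O(1)$ corrections) each residue class contains $N/k$ indices, each tree shape contributes
\begin{equation*}
k(k-1)^m \cdot (N/k)^{m+1} \;=\; (1-1/k)^m \, N^{m+1} \cdot (1+o(1)),
\end{equation*}
and each such labeling carries a weight of $\E[a_{ij}^2]^m = 1$. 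Summing over the $C_m$ tree shapes and dividing by $N^{m+1}$ gives $C_m(1-1/k)^m = C_m (R/2)^{2m} = \sigma_R^{(2m)}$, which is exactly the desired semicircle moment.

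Next I would bound the remaining configurations. Any closed walk on $v$ distinct vertices contributes at most $O(N^v)$ labelings, and any configuration with nonvanishing expectation that is not a "tree-with-edges-doubled" walk must have $v \leq m$; therefore such configurations contribute $O(N^m) = o(N^{m+1})$ after normalization. The finite higher moments of the $a_{ij}$ keep the per-configuration expectations bounded uniformly, so the total error is indeed $o(1)$. The checkerboard restriction can only decrease these counts, so no modification of the error bound is needed.

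The main obstacle is the bookkeeping in the coloring step: one must verify that the residue restriction produces precisely the factor $(1-1/k)^m$ rather than some more complicated quantity depending on the tree's shape. The observation that a tree on $m+1$ vertices has chromatic polynomial $k(k-1)^m$ independently of its shape is what makes the calculation clean and matches the rescaled semicircle with radius $R = 2\sqrt{1-1/k}$. Once the expected moments are shown to converge to $\sigma_R^{(l)}$, weak almost sure convergence of $\nu_{A_N}$ follows from the standard variance-bound and Borel–Cantelli arguments; but since only the convergence of expected moments is claimed in this lemma, the combinatorial calculation sketched above suffices.
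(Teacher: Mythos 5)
Your proof is correct, and it follows the same line the paper points to: the paper does not reprove Lemma~\ref{lem:avgmomentsbulk1} itself but cites \cite{BCDHMSTPY}, which establishes it by the method of moments, the eigenvalue trace lemma, and closed-walk combinatorics — exactly the framework you use. Your bookkeeping is sound: the odd moments vanish by symmetry, the leading even-moment contribution comes from doubled-tree walks counted by $C_m$, the checkerboard constraint forces adjacent vertices of the tree to carry distinct residues mod $k$, the chromatic polynomial of any tree on $m+1$ vertices is $k(k-1)^m$ regardless of shape, and with $\approx N/k$ indices per residue class this yields the factor $(1-1/k)^m$, matching $(R/2)^{2m}$ for $R = 2\sqrt{1-1/k}$; non-tree-type walks contribute $O(N^m) = o(N^{m+1})$ as in the classical Wigner argument. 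The chromatic-polynomial observation is a particularly clean way to see that the rescaling factor is independent of the tree's shape, which is the crux of why the limit is still a semicircle rather than something more complicated.
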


Thus, by Lemmas \ref{lem:avgmomentsbulk1} and \ref{stabilityESDrank}, we obtain the limiting distribution for the bulk of the general checkerboard ensemble.
\begin{lemma}\label{lem:avgmomentsbulk2}
The expected moments of the bulk empirical spectral measure taken over $A_N$ in the $N \times N$ general $(k,W)$-checkerboard ensemble converge to the moments of the Wigner semicircle distribution $\sigma_R$ with radius $R=2\sqrt{1-1/k}$ and
\begin{equation}
\E\left[\nu^{(\ell)}_{A_N}\right] \rightarrow \sigma_R^{(\ell)}
\end{equation}
as $N \rightarrow \infty$.
\end{lemma}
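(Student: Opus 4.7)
The plan is to deduce Lemma 2.2 from Lemma 2.1 by a low-rank perturbation argument based on Proposition \ref{stabilityESDrank}. First, I would decompose any $(k,W)$-checkerboard matrix $A_N$ as
\begin{equation}
A_N \;=\; \mathcal{A}_N + \tilde{\mathcal{A}}_N,
\end{equation}
where $\mathcal{A}_N$ is the $(k,0)$-checkerboard matrix obtained by replacing every fixed entry $w_u$ in $A_N$ by $0$ (keeping the same random entries off the checkerboard pattern), and $\tilde{\mathcal{A}}_N$ is the deterministic matrix with $(\tilde{\mathcal{A}}_N)_{ij} = w_u$ whenever $i \equiv j \equiv u \pmod{k}$ and zero otherwise.

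The next step is to bound the rank of the deterministic piece. After conjugating by the permutation that groups the indices $\{1,\dots,N\}$ by residue class modulo $k$, $\tilde{\mathcal{A}}_N$ becomes block diagonal with $k$ blocks, where the $u$-th block is $w_u \mathbf{1}\mathbf{1}^T$ on $\lfloor N/k\rfloor$-or-so indices; each such block has rank at most $1$, so $\rank(\tilde{\mathcal{A}}_N) \leq k$. In particular $\tfrac{1}{N}\rank(\tilde{\mathcal{A}}_N) \to 0$. By Lemma \ref{lem:avgmomentsbulk1}, the empirical spectral measures $\nu_{\mathcal{A}_N}$ of the $(k,0)$-ensemble converge weakly almost surely to $\sigma_R$ with $R = 2\sqrt{1-1/k}$, and so Proposition \ref{stabilityESDrank} applied to the pair $(\mathcal{A}_N,\tilde{\mathcal{A}}_N)$ yields the corresponding weak almost sure convergence of $\nu_{A_N}$ to $\sigma_R$.

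The main obstacle is that the statement is phrased in terms of expected moments, and not every moment of $\nu_{A_N}$ converges: the (at most $x \leq k$) blip eigenvalues of $A_N$ have size of order $N$, so after the $\sqrt{N}$ normalization they are of order $\sqrt{N}$, and their contribution to the $\ell$-th moment is on the order of $N^{\ell/2 - 1}$, which diverges for $\ell \geq 3$. This forces us to interpret the \emph{bulk} empirical spectral measure as the measure supported only on the $N-x$ bulk eigenvalues, after discarding the (at most $k$) blip outliers. To handle this rigorously, I would invoke Weyl's interlacing inequality for Hermitian perturbations: since $\tilde{\mathcal{A}}_N$ has rank at most $k$, the eigenvalues of $A_N$ interlace with those of $\mathcal{A}_N$ after removing at most $k$ of them, and hence the bulk moments of $A_N$ differ from the moments of $\nu_{\mathcal{A}_N}$ by $O(k/N)$ uniformly in $\ell$ (for each fixed $\ell$). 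Combining this with Lemma \ref{lem:avgmomentsbulk1} gives
\begin{equation}
\E\!\left[\nu_{A_N}^{(\ell)}\right] \;\longrightarrow\; \sigma_R^{(\ell)}
\end{equation}
for each fixed $\ell$, completing the proof.
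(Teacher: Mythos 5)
Your proof follows the same route as the paper: decompose $A_N = \mathcal{A}_N + \tilde{\mathcal{A}}_N$ with $\mathcal{A}_N$ a $(k,0)$-checkerboard matrix and $\tilde{\mathcal{A}}_N$ the fixed matrix, show $\rank(\tilde{\mathcal{A}}_N) \leq k$ by permuting into block-diagonal form (correct), and combine Proposition \ref{stabilityESDrank} with Lemma \ref{lem:avgmomentsbulk1}. The paper's written proof stops at that citation, but, as you correctly observe, this only yields almost-sure \emph{weak} convergence of the normalized empirical spectral measure, not convergence of expected moments --- and indeed the moments of the full $\nu_{A_N}$ diverge once blips are present. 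Your additional Weyl-interlacing step, exploiting that a rank-$\leq k$ Hermitian perturbation shifts eigenvalue ranks by at most $k$, is the right way to transfer the $(k,0)$ moment convergence of Lemma \ref{lem:avgmomentsbulk1} to the bulk moment convergence asserted here, and it makes explicit a point the paper leaves implicit. One small caveat: the claimed error rate $O(k/N)$ presumes an $O(1)$ almost-sure bound on the normalized bulk eigenvalues; with only the paper's $O(N^{1/2+\epsilon})$ spectral-radius control, the error is instead $O_\ell(k N^{\epsilon\ell - 1})$, which still vanishes for each fixed $\ell$, so the conclusion stands.
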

\subsection{Split Behavior}
In this section we demonstrate that general checkerboard matrices with $s$ different non-zero $w$'s almost surely have $s+1$ regimes of eigenvalues. One is $O(N^{1/2+\epsilon})$ (the bulk) and the others are of order $N$ (the blip). Similar to Appendix B of \cite{BCDHMSTPY}, we rely on matrix perturbation theory. In particular, we view a $(k,w)$-checkerboard matrix as the sum of a $(k,0)$-checkerboard matrix and a fixed matrix $Z$ where $Z_{ij}=w_u\chi\{{i \equiv j \equiv u \bmod k}\}$. In that sense, we view the $(k,w)$-checkerboard matrix as a perturbation of the matrix $Z$. Then, as the spectral radius of the $(k,0)$-checkerboard matrix is $O(N^{1/2+\epsilon})$, we obtain by standard results in the theory of matrix perturbations that the spectrum of the $(k,w)$-checkerboard matrix is the same as that of matrix $Z$ up to an order $N^{1/2+\epsilon}$ perturbation.

We begin with the following observation on the spectrum of the matrix $Z$.

\begin{lemma}\label{lem_spectrum_Z}
Suppose that $W$ has $x$ non-zero entries and
suppose that $w'_i$ appears $k_i$ times in $W=(w_1, \dots,w_k)$, then the matrix $Z$ has exactly $k_i$ eigenvalues at $Nw'_i/k$ and has $N-x$ eigenvalues at zero.
\end{lemma}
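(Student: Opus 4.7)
The plan is to expose the block structure of $Z$ by reordering the indices according to their residue class modulo $k$. For each $u \in \{1,\dots,k\}$, let $S_u = \{i : 1 \le i \le N, \, i \equiv u \pmod{k}\}$, so that $|S_u| = N/k$ (assuming $k \mid N$, consistent with the construction of the ensemble). If $P$ is the permutation matrix that lists the indices of $S_1$ first, then those of $S_2$, and so on, then $P Z P^T$ is block-diagonal: the $u$-th diagonal block is $w_u J_m$, where $m = N/k$ and $J_m$ is the $m \times m$ all-ones matrix, while all off-diagonal blocks vanish because $Z_{ij} = 0$ whenever $i \not\equiv j \pmod k$.

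Since $P$ preserves the spectrum, it suffices to understand each block separately. The matrix $J_m$ has rank one, so it has a single nonzero eigenvalue equal to $m = N/k$ (with eigenvector $(1,\dots,1)^T$) and a zero eigenvalue of multiplicity $m-1$. Consequently, each block $w_u J_m$ contributes one eigenvalue $w_u N/k$ together with $m-1$ zeros when $w_u \ne 0$, and $m$ zeros when $w_u = 0$.

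To finish, I would aggregate the contributions across $u = 1, \dots, k$. For each distinct nonzero value $w'_i$ that appears $k_i$ times in $W$, exactly $k_i$ of the blocks produce the eigenvalue $w'_i N / k$, so $Z$ has precisely $k_i$ eigenvalues equal to $N w'_i / k$. For the zero eigenvalues, the $x$ blocks with $w_u \ne 0$ each contribute $m-1$ zeros, while the $k-x$ blocks with $w_u = 0$ each contribute $m$ zeros, for a total of $x(m-1) + (k-x)m = km - x = N - x$ zero eigenvalues.

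There is no real obstacle here; the lemma is a direct consequence of the rank-one spectral decomposition of the all-ones matrix once the right permutation reveals the block-diagonal form. The only point worth stating explicitly is the assumption that $k$ divides $N$, which matches the standing convention in the paper, and a trivial modification using floor functions handles the remainder case should it arise elsewhere.
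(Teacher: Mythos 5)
Your proof is correct, and it rests on exactly the same structural observation the paper uses: after grouping the indices by residue class modulo $k$, the matrix $Z$ becomes a direct sum of scaled all-ones blocks $w_u J_{N/k}$, and the spectrum of each such block is one copy of $w_u N/k$ plus $N/k - 1$ zeros. The paper proceeds by listing eigenvectors directly rather than conjugating to block-diagonal form: the vectors $\sum_{i} e_{ki+i_j}$ (a sum over one residue class) carry the eigenvalue $N w_{i'}/k$, and a second family of differences is claimed to span the zero eigenspace. Your permutation-conjugation presentation buys you two things: the multiplicity count $x(m-1) + (k-x)m = N-x$ becomes automatic, and there is no need to separately verify linear independence of an ad hoc eigenvector list. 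It is also worth noting that the difference vectors the paper writes, $e_{ki+j} - e_{ki+j+1}$, mix two \emph{different} residue classes $j$ and $j+1$ and so are not eigenvectors of $Z$ in general (applying $Z$ produces $w_j$ in the class-$j$ rows and $-w_{j+1}$ in the class-$(j+1)$ rows); what is surely intended is $e_{ki+j} - e_{k(i+1)+j}$, a difference of two basis vectors within the \emph{same} residue class, which does lie in the kernel of $Z$. Your block-diagonal argument sidesteps this issue entirely. The one assumption you rely on, $k \mid N$, is the standing convention of the paper, and you correctly flag how to adjust with floors and ceilings otherwise.
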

\begin{proof}
Suppose $w_{i_1}= w_{i_2}=\dots=w_{i_t}=w_{i'}$, then
for $1\le j \le t$ the vectors $\sum_{i=0}^{(N-1)/k} e_{ki+i_j}$ are eigenvectors with eigenvalues $Nw_{i'}/k$. Furthermore, for $1\le i\le N$ and $1\le j<k$  the vector $e_{ki+j}-e_{ki+j+1}$ are eigenvectors with eigenvalues equal to $0$.
\end{proof}

Weyl's inequality gives the following.

\begin{lemma}\label{lem_weyl}
(Weyl's inequality) \cite{HJ} Let $H,P$ be $N\times N$ Hermitian matrices, and let the eigenvalues of $H$, $P$, and $H+P$ be arranged in increasing order. Then for every pair of integers such that $1\le j,k\le N$ and $j+k\ge N+1$ we have
\begin{equation}
\lambda_{j+k-N}(H+P)\ \le\ \lambda_j(H)+\lambda_k(P),
\end{equation}\label{eqn_wyle1}
and for every pair of integers $j,k$ such that $1\le j,k\le N$ and $j+k\le N+1$ we have
\begin{equation}\label{eqn_wyle2}
\lambda_j(H)+\lambda_k(P)\ \le\ \lambda_{j+k-1}(H+P).
\end{equation}
\end{lemma}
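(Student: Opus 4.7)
The plan is to derive both parts of Weyl's inequality from the Courant--Fischer min-max characterization of eigenvalues. For an $N\times N$ Hermitian matrix $M$ with eigenvalues $\lambda_1(M)\le\cdots\le\lambda_N(M)$ in increasing order, one has the dual characterizations
\[
\lambda_i(M) \;=\; \min_{\dim S = i}\ \max_{x\in S,\ \|x\|=1}\langle Mx,x\rangle \;=\; \max_{\dim T = N-i+1}\ \min_{x\in T,\ \|x\|=1}\langle Mx,x\rangle.
\]
I would use the first form to prove the upper bound \eqref{eqn_wyle1} and the second form to prove the lower bound \eqref{eqn_wyle2}; both arguments rest on intersecting spectral subspaces of $H$ and $P$ and applying the dimension formula $\dim(U_1\cap U_2)\ge \dim U_1 + \dim U_2 - N$ in $\mathbb{C}^N$.

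For \eqref{eqn_wyle1}, let $V_j$ be the span of orthonormal eigenvectors of $H$ corresponding to its $j$ smallest eigenvalues, and let $W_k$ be the analogous span for $P$. Then $\dim V_j = j$ and $\dim W_k = k$; unit vectors in these subspaces satisfy $\langle Hx,x\rangle\le\lambda_j(H)$ and $\langle Py,y\rangle\le\lambda_k(P)$ respectively. The dimension formula gives $\dim(V_j\cap W_k)\ge j+k-N$, which is positive by the hypothesis $j+k\ge N+1$. Choosing any $S\subseteq V_j\cap W_k$ with $\dim S = j+k-N$, every unit $x\in S$ satisfies $\langle(H+P)x,x\rangle\le\lambda_j(H)+\lambda_k(P)$, so feeding $S$ as a candidate into the min-max formula for $\lambda_{j+k-N}(H+P)$ yields \eqref{eqn_wyle1}.

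Inequality \eqref{eqn_wyle2} follows by the dual argument. Let $V_j'$ be the span of the eigenvectors of $H$ corresponding to its $N-j+1$ \emph{largest} eigenvalues and $W_k'$ the analogous span for $P$, so unit vectors in $V_j'$, $W_k'$ have Rayleigh quotients at least $\lambda_j(H)$ and $\lambda_k(P)$ respectively. The dimension count now yields $\dim(V_j'\cap W_k')\ge (N-j+1)+(N-k+1)-N = N-j-k+2$, which is positive under $j+k\le N+1$. Any subspace $T\subseteq V_j'\cap W_k'$ with $\dim T = N-(j+k-1)+1 = N-j-k+2$ plugged into the max-min form of Courant--Fischer applied to $\lambda_{j+k-1}(H+P)$ produces \eqref{eqn_wyle2}.

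The only delicate part of the argument is keeping the index conventions straight: because the statement uses the ascending-order convention, the ``lower'' spectral projections $V_j,W_k$ appear in the upper bound while the ``upper'' projections $V_j', W_k'$ appear in the lower bound, with the dimension formula used at opposite ends of the spectrum each time. As a sanity check, once \eqref{eqn_wyle1} is established, \eqref{eqn_wyle2} can be recovered by applying \eqref{eqn_wyle1} to $-H$ and $-P$ and invoking $\lambda_i(-M)=-\lambda_{N-i+1}(M)$, which makes the symmetry between the two bounds transparent.
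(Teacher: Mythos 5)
Your proof is correct. The paper does not prove this lemma at all --- it simply cites Horn and Johnson --- and your Courant--Fischer argument with the dimension count $\dim(U_1\cap U_2)\ge\dim U_1+\dim U_2-N$ is exactly the standard proof from that reference, with the index bookkeeping for the ascending-order convention handled correctly in both directions.
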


Let $\|P\|_{\text{op}}$ denote $\max_i \left|\lambda_i(P)\right|$. By using the fact that $\left|\lambda_k (P)\right|\le \|P\|_{\text{op}}$ and taking $k=N$ in \eqref{eqn_wyle1}, we obtain that $\lambda_j(H+P)\le \lambda_j(H)+\|P\|_{\text{op}}$. Taking $k=1$ in \eqref{eqn_wyle2} gives the inequality on the other side, hence $\left| \lambda_j(H+P)-\lambda_j(H)\right|\le \|P\|_{\text{op}}$.

The above lemma implies that if the spectral radius of $P$ is $O(f)$ then the size of the perturbations are $O(f)$ as well. Hence it suffices to demonstrate that almost surely the spectral radius of a sequence of $(k,0)$-checkerboard matrices is $O(N^{1/2+\epsilon})$.

Let $A_N$ be a $(k,0)$-checkerboard matrix. By Remark A.3 in \cite{BCDHMSTPY} we have that $\var(\tr(A^{2m}_N))=O(N^{2m})$ and by the proof of Lemma \ref{lem:avgmomentsbulk1} we get $\E\left[\tr(A^{2m}_N)\right]=O( N^{m+1})$.

Since Lemma B.2 in \cite{BCDHMSTPY} holds for all $m\in \Z^+$, we have that almost surely $\|A_N\|_{\text{op}}$ is $O(N^{{1/2}+\epsilon})$. Together with Lemma \ref{lem_spectrum_Z} and Lemma \ref{lem_weyl}, we obtain the following.

\bulkregimes*

\section{The Blip Spectral Measure}\label{sec:blip}
In this section, we study the distribution of the eigenvalues at the blips. First, we define a weight function to enable us to focus on just one blip at a time. Then, we reduce the general cases to the case where all $w_j \not=w_i$ are zero. Finally, we show that the distribution in the special case is hollow $k_1 \times k_1$ gaussian following an argument similar to the one in \cite{BCDHMSTPY}.

Without loss of generality, we focus on the blip around $Nw_1/k \neq 0$ and use the polynomial weight function \begin{equation}f_1^{2n}(x)\ =\ \left(\frac{x(2-x)\prod_{w_{j}\neq w_1}(x-\frac{w_j}{w_1})(2-x-\frac{w_j}{w_1})}{\prod_{w_{j}\neq w_1}(1-\frac{w_j}{w_1})^2}\right)^{2n}.\end{equation} As discussed in Remark \ref{rmk:long_blip_justification}, the particular choice of weight functions does not change the result, provided that they are essentially $1$ close to $1$ and vanish to sufficiently high order at $0$ and all $w_j/w_1$ to remove the contribution from the eigenvalues within the bulk and the other blips.

\begin{defi} \label{def:blipmeas}
The \textbf{empirical blip spectral measure} associated to an $N \times N$ $k$-checkerboard matrix $A_N$ around $Nw_1/k$ is
\begin{equation}\label{eqn:blipmeas}
\mu_{A_N,1}(x) \; :=\;\frac{1}{k_1} \sum_{\lambda \text{ \rm an eigenvalue of } A}  f_1^{2n}\left(\frac{k\lambda}{w_1 N}\right)\delta\left(x-\left(\lambda-\frac{w_1N}{k}\right)\right),
\end{equation} where $k_1$ is the number of $w_1$'s in $(w_1,\dots, w_k)$, and $n(N)$ is a function satisfying $\lim\limits_{N\to\infty} n(N) = \infty$ and $n(N) = O(\log \log N)$.
\end{defi}

Because the fluctuation of the location of the eigenvalues in each regime is of order $\sqrt{N}$, the modified spectral measure of Definition \ref{def:blipmeas} weights eigenvalues within this blip by almost exactly 1 and those in the bulk and the other blips by almost exactly zero.

For fixed $N$, the polynomial $f_1^{2n}$ can be written as $
    f_1^{2n}(x)  = \displaystyle  \sum_{\alpha = 2n}^{4nl} c_{\alpha} x^{\alpha}
$, where $l$ is the number of distinct constants in $(w_1,\dots, w_k)$ and all $c_{\alpha}\in\mathbb{R}$.

We apply the method of moments to the modified spectral measure \eqref{eqn:blipmeas}. By the eigenvalue trace formula and linearity of expectation, the expected $m$-th moment of the empirical blip spectral measure is \begin{align}\label{eqn:blipmoment}
\mathbb{E}\left[\mu_{A_N,1}^{(m)}\right] \;=\; &\mathbb{E}\left[\frac{1}{k_1}\sum_{\lambda} \sum_{\alpha=2n}^{4nl}c_{\alpha}\left(\frac{k\lambda}{w_1 N}\right)^{\alpha}\left(\lambda-\frac{w_1N}{k}\right)^{m}\right] \nonumber\\
=\;&\mathbb{E}\left[\frac{1}{k_1}\sum_{\alpha = 2n} ^{4nl}c_{\alpha} \left(\frac{k}{w_1N}\right)^{\alpha}\left(\sum_{i = 0}^{m}\binom{m}{i} \left(-\frac{w_1N}{k}\right)^{m-i}\text{Tr}(A_N^{\alpha+i})\right)\right] \nonumber\\
=\; &\frac{1}{k_1}\sum_{\alpha = 2n} ^{4nl}c_{\alpha} \left(\frac{k}{w_1N}\right)^{\alpha}\left(\sum_{i = 0}^{m}\binom{m}{i} \left(-\frac{w_1N}{k}\right)^{m-i}\mathbb{E}\left[\text{Tr}(A_N^{\alpha+i})\right]\right).
\end{align}

Recall that
\begin{equation} \label{eqn:trace}
\mathbb{E}\left[\Tr( A_N^{\alpha+i})\right] \; =\; \sum_{1 \leq j_1, \ldots, j_{\alpha+i} \leq N} \mathbb{E}\left[m_{j_1 j_2}m_{j_2 j_3} \cdots m_{j_{\alpha+i} j_1}\right].
\end{equation}
The calculation of the moment has been transformed into a combinatorial problem of counting different types of products of entries. We follow the vocabulary from \cite{BCDHMSTPY} to describe the combinatorics problem. 

\begin{defi}\label{def:block}
A \textbf{block} is a set of adjacent $a$'s surrounded by $w$'s in a cyclic product, where the last entry of a cyclic product is considered to be adjacent to the first. We refer to a block of length $\ell$ as an $\ell$-block or sometimes a block of size $\ell$.
\end{defi}

\begin{defi}\label{def:configuration}
A \textbf{configuration} is the set of all cyclic products for which it is specified (a) how many blocks there are, and of what lengths, and (b) in what order these blocks appear (up to cyclic permutation);
However, it is not specified how many $w$'s there are between each block.
\end{defi}

\begin{defi}\label{def:congruenceconfiguration}
A \textbf{congruence configuration} is a configuration together with a choice of the congruence class modulo $k$ of every index.
\end{defi}

\begin{defi}\label{def:matching}
Given a configuration, a \textbf{matching} is an equivalence relation $\sim$ on the $a$'s in the cyclic product which constrains the ways of indexing (see Definition \ref{def of an Indexing}) the $a$'s as follows: an indexing of $a$'s conforms to a matching $\sim$ if, for any two $a$'s $a_{i_{\ell},i_{\ell+1}}$ and $a_{i_{t},i_{t+1}}$, we have $\{i_\ell,i_{\ell+1}\}=\{i_t,i_{t+1}\}$ if and only if $a_{i_{\ell}i_{\ell+1}} \sim a_{i_{t},i_{t+1}}$. We further constrain that each $a$ is matched with at least one other by any matching $\sim$.
\end{defi}

\begin{defi}\label{def of an Indexing}
Given a configuration, matching, and length of the cyclic product, then an \textbf{indexing} is a choice of
\begin{enumerate}
\item the (positive) number of $w$'s between each pair of adjacent blocks (in the cyclic sense), and
\item the integer indices of each $a$ and $w$ in the cyclic product.
\end{enumerate}
\end{defi}

\begin{exa}\label{ex:congruence}
Consider the configuration
\begin{equation}
\cdots a_{i_1i_2}w_{i_2i_3}w_{i_3i_4}a_{i_4i_5}a_{i_5i_6} \cdots.
\end{equation}
Then we have
\begin{equation}
i_2 \equiv i_3 \equiv i_4  \pmod{k}.
\end{equation}
\end{exa}

We see that the congruence classes of the indices of the $a$'s determine which congruence classes of the indices of the $w$'s belong to, and thus which $w_j$'s appear between the blocks.

\subsection{Reducing to the case where all $w_j\neq w_1$ are zero}

We will show that if there is some $w_j\neq w_1$ in a fixed congruence configuration
, then it does not contribute to the expected moment \eqref{eqn:blipmoment} in the limit.

We begin by analyzing the form of the summands in the total contribution of a congruence configuration in Lemma \ref{lem:sumform}. The following lemma helps us to derive this form, and its proof is provided in Appendix \ref{sec:appendixproof1}.

\begin{lemma} \label{lem:poly}
	Fix $s\in\mathbb{N}$ with $s\geq 2$ and some polynomial $p(x_1,\dots, x_s)\in\mathbb{R}[x_1,\dots, x_s]$ of degree $q$. For $\eta\in\mathbb{N}$ with $\eta\geq \sum_{i=1}^{s} y_i$ and distinct $w_1,\dots, w_s$, we have \begin{align}
	\sum_{\substack{x_1+\dots + x_s = \eta\\ x_i\geq y_i}}p(x_1,\dots, x_s) w_1^{x_1} \cdots w_{s}^{x_s} \; = \; \frac{\sum_{l = 1}^{s} w_{l}^{\eta + 2 -\sum_{i=1}^{s}y_i} f_{l,\eta}(w_1,\dots, w_s)}{(\prod_{1\leq i < j \leq s}(w_i - w_j))^{2^{q}}},
	\end{align} where each $f_{l,\eta}(w_1,\dots, w_s)\in\mathbb{R}[\eta][w_1,\dots, w_s]$ is a homogeneous polynomial in $w_1,\dots, w_s$ of degree $ 2^{q}\binom{s}{2} + (\sum_{i=1}^{s}y_i)-2$. Furthermore, the coefficients in the polynomial $f_{l,\eta}(w_1,\dots, w_s)$ are polynomials in $\eta$ of degree $\leq q$.
\end{lemma}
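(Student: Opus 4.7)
The plan is induction on $q = \deg p$. The core identity is $x_i w_i^{x_i} = w_i\partial_{w_i}(w_i^{x_i})$, which lets us rewrite
\[
  \sum_{\substack{x_1+\dots+x_s=\eta\\ x_i\ge y_i}} p(x_1,\dots,x_s)\, w_1^{x_1}\cdots w_s^{x_s} \;=\; p(D_1,\dots,D_s)\!\!\sum_{\substack{x_1+\dots+x_s=\eta\\ x_i\ge y_i}} \!w_1^{x_1}\cdots w_s^{x_s},
\]
where $D_i := w_i\partial_{w_i}$. So it suffices to compute the unweighted sum explicitly and then track how the differential operators $D_i$ transform the resulting rational expression.

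For the base case $q = 0$, the substitution $x_i = z_i + y_i$ (with $z_i \ge 0$ and $\sum z_i = \eta - \sum y_i =: m$) pulls out $\prod_i w_i^{y_i}$ and reduces the inner sum to the complete homogeneous symmetric polynomial $h_m(w)$. The partial-fraction expansion of $\prod_i (1-w_i T)^{-1}$ yields the Schur/Lagrange identity
\[
  h_m(w) \;=\; \sum_{l=1}^s \frac{w_l^{m+s-1}}{\prod_{j\ne l}(w_l - w_j)},
\]
and rewriting each $\prod_{j\ne l}(w_l - w_j)$ in terms of the Vandermonde $B := \prod_{i<j}(w_i - w_j)$ (up to sign and the omitted-factor subproduct) puts everything over the common denominator $B^{2^0} = B$. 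A direct exponent count confirms the numerator takes the stated form $\sum_l w_l^{\eta+2-\sum y_i} f_{l,\eta}(w)$ with each $f_{l,\eta}$ homogeneous of degree $\binom{s}{2}+\sum y_i - 2$ and constant in $\eta$.

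For the inductive step, by linearity it suffices to treat $p(x) = x_{i_0}\cdot r(x)$ with $\deg r = q$. Writing $\sum r(x) w^x = N/B^{2^q}$ via the inductive hypothesis (with $E := \eta+2-\sum y_i$ and $N = \sum_l w_l^E f_l$), applying $w_{i_0}\partial_{i_0}$ via the quotient rule, and then multiplying top and bottom by $B^{2^q-1}$ gives
\[
  w_{i_0}\partial_{i_0}\!\left(\frac{N}{B^{2^q}}\right) \;=\; \frac{w_{i_0}(\partial_{i_0}N)\,B - 2^q w_{i_0} N (\partial_{i_0}B)}{B^{2^q+1}} \;=\; \frac{B^{2^q-1}\bigl[w_{i_0}(\partial_{i_0}N)\,B - 2^q w_{i_0} N (\partial_{i_0}B)\bigr]}{B^{2^{q+1}}},
\]
where the loose exponent $2^{q+1}$ in the lemma affords exactly this slack (the quotient rule alone would only give $B^{2^q+1}$). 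The key structural fact is that $w_{i_0}\partial_{i_0}$ preserves the decomposition $N = \sum_l w_l^E f_l$: for $l \ne i_0$ the derivative acts purely on $f_l$, while for $l = i_0$ the outer $w_{i_0}$ compensates for the power dropped from $w_{i_0}^E$, since
\[
  w_{i_0}\partial_{i_0}(w_{i_0}^E f_{i_0}) \;=\; w_{i_0}^E\bigl(E f_{i_0} + w_{i_0}\partial_{i_0} f_{i_0}\bigr).
\]
Multiplying by $B$, $\partial_{i_0} B$, and $B^{2^q-1}$ (each free of $w_l^E$ structure) likewise preserves the $\sum_l w_l^E(\cdots)$ form.

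The main obstacle I anticipate is the degree bookkeeping: verifying at each inductive step that (a) each new $\tilde f_l$ is homogeneous of degree exactly $2^{q+1}\binom{s}{2} + \sum y_i - 2$, gaining precisely $\binom{s}{2}$ over the previous step from the extra factor of $B$ in the numerator, and (b) its coefficients remain polynomials in $\eta$ of degree at most $q+1$. Point (b) holds because the only new $\eta$-dependence introduced per inductive step is the explicit factor $E = \eta + 2 - \sum y_i$ arising from differentiating $w_{i_0}^E$; every other operation (multiplication by $w$'s, differentiation in $w$'s, multiplication by $B$ or $\partial_{i_0} B$) preserves $\eta$-polynomial degree, so the degree in $\eta$ grows by at most one per induction step, yielding the claimed bound of $q$ after $q$ applications.
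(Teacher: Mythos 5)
Your proposal is correct and follows essentially the same route as the paper's Appendix A: the base case is the explicit rational form of the constrained geometric sum (you get it via the partial-fraction identity for the complete homogeneous symmetric polynomial, the paper via induction on $s$), and the inductive step on $q$ applies the Euler operator $w_{i_0}\partial_{w_{i_0}}$, uses the quotient rule, and pads the denominator to $B^{2^{q+1}}$ by multiplying through by $B^{2^q-1}$, exactly as in the paper's Lemmas A.2--A.4. Your tracking of the $\eta$-degree (one extra factor of $E=\eta+2-\sum y_i$ per differentiation) and of the homogeneity degree matches the paper's bookkeeping, so no gap.
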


\begin{lemma} \label{lem:sumform}
    Fix a congruence configuration and a matching. 
    The contribution to $\mathbb{E}\left[ \Tr (A_N^{\alpha+i}) \right]$ is a sum of terms of the form \begin{equation} \label{eqn:sumform}
    p(\alpha+i) w_j^{\alpha + i-\gamma}\left(\frac{N}{k}\right)^{\alpha + i - t}\end{equation} where $p$ is a polynomial of degree $\leq \beta-s+1$, $\beta$ is the number of blocks determined by the configuration, $s$ is the number of distinct constants $w$'s determined by the chosen congruence classes, and $t$ is the lost degrees of freedom determined by the matching.
\end{lemma}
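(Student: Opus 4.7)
My plan is to separate the contribution into an $a$-part (governed by the matching and the fixed residue classes) and a $w$-part (governed by the lengths of the $w$-runs between consecutive blocks), and then invoke Lemma \ref{lem:poly} on the $w$-part to obtain the form \eqref{eqn:sumform}.

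Fixing a congruence configuration and matching, I would write the associated piece of $\mathbb{E}[\Tr(A_N^{\alpha+i})]$ as a sum, over indexings consistent with both, of $\mathbb{E}[\text{product of the }a\text{'s}]\cdot\prod_{g=1}^{\beta}w_{u_g}^{y_g}$, where $u_g$ is the fixed congruence class of the $g$-th gap, $y_g\ge 1$ is its length, and $\sum_g y_g=\eta:=(\alpha+i)-\ell$ with $\ell$ the total number of $a$-positions (determined by the configuration). Since the $a_{ij}$ are i.i.d.\ with fixed higher moments, the $a$-expectation is a constant depending only on the matching.

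Next I would factor the indexing sum into the choice of index values and the choice of gap lengths. Each vertex of the cyclic chain $j_1,\ldots,j_{\alpha+i}$ has its residue class fixed by the configuration, and the matching forces a number of equalities among the $j_\ell$'s that depends only on the matching---inserting more $w$-positions into a gap simply creates new, unmatched vertices. Hence the index-value sum contributes $(N/k)^{\alpha+i-t}$, where $t$ is the ``lost degrees of freedom,'' a function of the matching alone and in particular independent of the gap lengths. The remaining sum over $(y_g)$, regrouped by the $s$ distinct $w$-values appearing in the gaps (with $\beta_u$ the number of gaps of type $u$, so $\sum_u\beta_u=\beta$), collapses via $\binom{x_u-1}{\beta_u-1}=\#\{\text{compositions of }x_u\text{ into }\beta_u\text{ positive parts}\}$ into
\begin{equation}
\sum_{\substack{x_1+\cdots+x_s=\eta\\ x_u\ge\beta_u}}\Bigl(\prod_{u=1}^s\binom{x_u-1}{\beta_u-1}\Bigr)w_1^{x_1}\cdots w_s^{x_s}.
\end{equation}

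I would then apply Lemma \ref{lem:poly} with $y_u=\beta_u$ and inner polynomial $p(x)=\prod_u\binom{x_u-1}{\beta_u-1}$, of total degree $q=\sum_u(\beta_u-1)=\beta-s$. The lemma expresses this sum as a linear combination of terms $w_l^{\eta+2-\beta}f_{l,\eta}(w)$ whose coefficients are polynomials in $\eta$ of degree at most $\beta-s$; since $\eta=(\alpha+i)-\ell$ with $\ell$ fixed, these are polynomials in $\alpha+i$ of degree at most $\beta-s\le\beta-s+1$. Absorbing the constant denominator $(\prod_{i<j}(w_i-w_j))^{-2^q}$ and combining with the separated factor $(N/k)^{\alpha+i-t}$ yields a sum of terms of the form $p(\alpha+i)\,w_l^{\alpha+i-\gamma}(N/k)^{\alpha+i-t}$ with $\gamma=\ell+\beta-2$, matching \eqref{eqn:sumform}. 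The main obstacle will be the bookkeeping for the index count: verifying carefully that matched $a$-positions impose only a fixed number of vertex identifications in the cyclic chain, so that the factor $(N/k)^{\alpha+i-t}$ truly separates out of the gap-length sum with $t$ depending on the matching alone.
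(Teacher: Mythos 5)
Your decomposition is essentially the paper's: pull out the constant $a$-expectation determined by the matching, count the integer index choices as $\left(\frac{N}{k}\right)^{\alpha+i-t}$ with $t$ depending only on the matching, regroup the gap-length sum by the $s$ distinct $w$-values into $\sum\prod_u\binom{x_u-1}{y_u-1}w_{j_u}^{x_u}$, and collapse it with Lemma \ref{lem:poly} applied to an inner polynomial of degree $\beta-s$. That is the same route the paper takes, and your flagged concern (that matched $a$'s impose a gap-length-independent number of index identifications) is handled the same way there.

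There is, however, one concrete omission: the cyclic-rotation multiplicity. A configuration fixes the order of the blocks only up to cyclic permutation, whereas the trace $\sum_{j_1,\dots,j_{\alpha+i}}\mathbb{E}[m_{j_1j_2}\cdots m_{j_{\alpha+i}j_1}]$ runs over cyclic products with a distinguished starting index $j_1$. So for each choice of gap lengths and integer indices, roughly $\alpha+i$ distinct terms of the trace (one per rotation, up to the symmetry of the pattern) belong to the fixed configuration, and this extra factor is linear in $\alpha+i$. This is exactly the ``taking into account cyclic permutation'' step in the paper's proof and is the source of the $+1$: the degree bound is $\beta-s+1$, not the $\beta-s$ you obtain. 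The point is not cosmetic. You only need an upper bound on $\deg p$ to match the statement of \eqref{eqn:sumform}, so your conclusion is formally consistent, but the identity you derive undercounts the contribution by a factor of order $\alpha+i$, and the degree-$(\beta-s+1)$ part you drop is precisely what survives the later cancellations: in the case $s=1$, $\beta=m$ the surviving term has $p$ of degree exactly $m$ (the $(\alpha+i)^{\beta}/\beta!$ term of Proposition \ref{proposition contribution}), while a polynomial of degree $\beta-s=m-1<m$ would be annihilated by the finite-difference identity behind \eqref{eqn:eqn2}, forcing all limiting moments to vanish. So you should restore the rotation factor before applying Lemma \ref{lem:poly}, or multiply it in afterwards as the paper does; with that correction the argument is complete.
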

\begin{proof}
    Suppose that the distinct constants $w_{j_1}$, $w_{j_2}$, $\dots$, $w_{j_s}$ appear in the configuration, where each $w_{j_q}$ appears $x_q$ times and the $x_q$ $w_{j_q}$'s are separated by the blocks into $y_q$ parts. There are $\binom{x_q-1}{y_q-1}$ ways to put $x_q$ $w_{j_q}$ into $y_q$ gaps.
    Note that $\mathbb{E}\left[m_{j_1 j_2}m_{j_2 j_3} \cdots m_{j_{\alpha+i} j_1}\right] = w_{j_1}^{x_1}x_{j_2}^{x_2} \cdots w_{j_s}^{x_s} \mathcal{A}$ where $\mathcal{A}$ is some constant determined by the matching.

    Denote the number of $a$'s in this configuration by $r$, then $x_1+x_2+\dots+x_s=\alpha+i-r$ and there are \begin{equation} \label{eqn:combinatorics}
    \sum_{\substack{x_i \geq y_i \\ x_1+\dots+x_s=\alpha+i-r}}\prod_{q=1}^s\binom{x_q-1}{y_q-1}w_{j_q}^{x_q}\end{equation} ways to place the constants $w_{j_1}$, $w_{j_2}$, $\dots$, $w_{j_s}$.
    For fixed $y_1,\dots, y_s$, we can write \eqref{eqn:combinatorics} as \begin{equation} \label{eqn:combinatorics_poly}
    \sum_{\substack{x_i \geq y_i \\ x_1+\dots+x_s=\alpha+i-r}}\widetilde{g}_{y_1,\dots, y_s} (x_1,\dots, x_s)w_{j_1}^{x_1}w_{j_2}^{x_2}\cdots w_{j_s}^{x_s}\end{equation} where $\widetilde{g}_{y_1,\dots, y_s}(x_1,\dots, x_s) \in \mathbb{R}[x_1,\dots, x_s]$ is a polynomial in $x_1,\dots, x_s$ of degree $\sum_{q=1}^{s}(y_q-1) = (y_1+\cdots+y_s) - s = \beta-s$.

    By Lemma \ref{lem:poly}, we can write \eqref{eqn:combinatorics_poly} as a sum of the terms of the form \begin{equation}w_j^{\alpha+i-r+2-\sum_{i=1}^{s} y_i} \widetilde{p}(\alpha+i-r) \;=\; \widetilde{p}(\alpha+i-r)
    w_j^{\alpha+i-r+2-\beta}
    \end{equation} where $\widetilde{p}(x) \in\mathbb{R}[x]$ is a polynomial of degree $\leq \beta-s$.

    Recall that $\beta,r$ are constants fixed by the configuration. Taking into account cyclic permutation, the contribution is a sum of the terms of the form \begin{equation*}
    p(\alpha+i)w_j^{\alpha+i-\gamma}\left(\frac{N}{k}\right)^{\alpha+i-t}
    \end{equation*} where $p(x) \in\mathbb{R}[x]$ is a polynomial of degree $\leq \beta-s+1$, $\gamma\in\mathbb{Z}$ and $\left(\frac{N}{k}\right)^{\alpha+i-t}$ is from choosing the indices from given equivalence classes modulo $k$.
\end{proof}

Observe that in \eqref{eqn:trace} there are $(\alpha+i)$ degrees of freedom in choosing $j_1,\dots, j_{\alpha+i}$. Whenever the lost degrees of freedom $t \geq m+1$, we have
\begin{align}\label{equation 3.5}
&\sum_{\alpha = 2n} ^{4nl}c_{\alpha} \left(\frac{k}{w_1N}\right)^{\alpha}\left(\sum_{i = 0}^{m}\binom{m}{i} \left(-\frac{w_1N}{k}\right)^{m-i}N^{\alpha+i-t}\right) \;\nonumber\\ = \;& N^{m-t} \left(\sum_{\alpha = 2n} ^{4nl}c_{\alpha} \left(\frac{k}{w_1}\right)^{\alpha}\right)\left(\sum_{i = 0}^{m}\binom{m}{i} \left(-\frac{w_1}{k}\right)^{m-i}\right) \nonumber\\
\ll \;& N^{m-t} \left(1+\frac{|w_1|}{k}\right)^{m} \left\lvert f_1^{2n}\left(\frac{k}{w_1}\right)\right\rvert \nonumber\\
\ll \;& N^{-1}  \left\lvert f_1\left(\frac{k}{w_1}\right)\right\rvert^{2n},
\end{align} then since we have required $n(N) = O(\log \log N)$, we only need to consider the contribution from $\mathbb{E}\left[ A_N^{\alpha+i}\right]$ that loses at most $m$ degrees of freedom.

\begin{rem}
Even though each term contributes $O(1/N)$, the contribution adds up to $c^{n(N)}/N$ for some $c\in\mathbb{R}$. Thus, in order to remove the contributions from configurations with more than $m$ blocks in this way, we have to require $n = o(\log N)$, so we correct the assumed growth rate $ n(N) \gg N^{\epsilon} $ in \cite{BCDHMSTPY}. 
\end{rem}

We cite the following lemma from \cite{BCDHMSTPY}, which relates the number of blocks to the lost degree of freedom.
\begin{lem}\label{lem_contributions}(\cite{BCDHMSTPY})
Fix the number of blocks $\beta$, and consider all classes with $\beta$ blocks. Then the classes among these with the highest number of degrees of freedom are exactly those which contain only $1$- or $2$-blocks, $1$-blocks are matched with exactly one other $1$-block, and both $a$'s in any $2$-block are matched with their adjacent entry and no others.
\end{lem}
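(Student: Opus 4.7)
The plan is to adapt the standard Wigner semicircle argument, in which non-crossing pair matchings maximize the number of distinct vertices in a closed walk, to the block-and-$w$-segment structure of our cyclic products. Interpret the lost degrees of freedom $t$ as the number of nontrivial index identifications forced by the matching: a pair of matched $a$'s $a_{i_p i_{p+1}}$ and $a_{i_q i_{q+1}}$, together with the fixed congruence classes, generically forces two scalar identifications, whereas the self-matching of a $2$-block $a_{i_p i_{p+1}} a_{i_{p+1} i_{p+2}}$ forces only one identification, namely $i_p = i_{p+2}$, since $i_p = i_{p+1}$ is ruled out by congruence class incompatibility.

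First I would verify by direct computation that the configurations in the statement achieve $t = \beta$: if there are $\beta_1$ $1$-blocks (paired up into $\beta_1/2$ pairs) and $\beta_2$ self-matched $2$-blocks with $\beta_1 + \beta_2 = \beta$, then the $1$-block pairings contribute $2 \cdot (\beta_1/2) = \beta_1$ identifications and the $2$-block self-matchings contribute $\beta_2$ identifications, giving exactly $\beta$ in total, all of which can be arranged to be independent; the internal $w$-segment indices are then completely free, yielding $\alpha + i - \beta$ degrees of freedom.

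Next I would show strict suboptimality in all other cases, organized in three parts. For a block of size $\ell \geq 3$, the $\ell$ internal $a$'s share indices along the block, and any matching (whether confined to the block or extending to others) forces at least one more independent identification among the $\ell + 1$ block indices than one obtains from decomposing the same $\ell$ $a$'s into several $1$- and $2$-blocks. For a $1$-block whose matching class has size $\geq 3$, enumerating the possible orientations of the three-way identification $\{i_p, i_{p+1}\} = \{i_q, i_{q+1}\} = \{i_r, i_{r+1}\}$ produces strictly more than two independent scalar identifications, hence loses more than the pair-match baseline. For a $2$-block whose two $a$'s are matched externally rather than to each other, the shared internal index $i_{p+1}$ is now independently constrained by two separate cross-block pair matchings, contributing two identifications rather than one.

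The main obstacle is the careful case analysis for the cross-block matchings in the latter two points, where coupling through shared indices and congruence classes can introduce or cancel identifications in non-obvious ways. A clean way to handle this is to reinterpret the situation via the ribbon-graph / genus-zero construction classically used in the Wigner proof, where each matching defines a closed orientable surface whose Euler characteristic $V - E + F$ relates the count of distinct vertices $V$ to the genus $g$; the maximum $V$ is achieved exactly at genus zero, which corresponds precisely to the configurations in the statement. Since the block/matching structure here is identical to that of the $(k,w)$-checkerboard ensemble of \cite{BCDHMSTPY}, the argument proceeds in parallel to their proof of the same lemma, with $k_i$ playing the role of $k$.
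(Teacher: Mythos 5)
The paper does not actually prove this lemma; it is imported verbatim from \cite{BCDHMSTPY}, and the only content in this paper is Remark \ref{Remark of Lemma 3.11}, which says the cited proof works by showing that the average number of degrees of freedom lost per block is at least $1$, with equality exactly for the stated configurations. Your achievability computation matches this: $\beta_1/2$ pairs of $1$-blocks each lose $2$ degrees, $\beta_2 = \beta-\beta_1$ self-matched $2$-blocks each lose $1$, for a total of $t = \beta$, i.e.\ an average of exactly $1$ per block; and the three ``bad'' categories you enumerate for strict suboptimality (blocks of length $\geq 3$, $1$-blocks in matching classes of size $\geq 3$, $2$-blocks matched externally) are the right ones to rule out. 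So the direct-counting half of your plan is sound and parallel to the cited argument.

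However, the ribbon-graph/genus-zero shortcut you propose for the hard case analysis is conceptually wrong for the blip regime, and it is worth being precise about why. In the standard Wigner trace method, the genus-zero surfaces correspond to non-crossing pair partitions, and genus zero is precisely what maximizes the vertex count --- that is what gives Catalan-number moments and the semicircle. Here the optimal configurations are \emph{not} restricted to non-crossing $1$-block pairings: crossing pairings of $1$-blocks are just as good, because the intervening $w$-segments carry free indices (constrained only mod $k$) that absorb what would otherwise be genus. This is exactly why the blip moments come out as hollow GOE trace moments $\mathbb{E}_{k_1}[\Tr(B^{m_1})]$ rather than Catalan numbers. If you actually tried to run the genus-zero argument you would incorrectly conclude that only non-crossing configurations contribute, which contradicts Proposition \ref{proposition contribution}. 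The correct proof, as \cite{BCDHMSTPY} indicates, must instead do the per-block degree-of-freedom bookkeeping directly --- the portion of your proposal you flagged as the obstacle --- and cannot be outsourced to the Euler-characteristic machinery from the bulk setting.
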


\begin{rem}\label{Remark of Lemma 3.11}
In \cite{BCDHMSTPY}, they prove Lemma \ref{lem_contributions} by showing that the average number of degrees of freedom lost per block is at least $1$, and that the average number of degrees of freedom lost per block is $1$ if and only if we have the configurations and matchings specified in Lemma \ref{lem_contributions}. 
\end{rem}

By Lemma \ref{lem_contributions}, we can restrict ourselves to the configurations that have no more than $m$ blocks.

The following lemma allows us to cancel the contributions from the congruence configurations that contain some constants $w_j\neq w_1$ and reduce the general case to the special one where all the constant $w_j\neq w_1$ are zero.

\begin{lemma}\label{lem_zero}
    Suppose the polynomial $f(x)  :=  \sum_{\alpha} c_{\alpha} x^{\alpha} \in \mathbb{R}[x]$ has a zero of order $n > 0$ at $x_0$. Then \begin{align}
    \sum_{\alpha}c_{\alpha}x_0^{\alpha} p(\alpha) \; = \; 0
\end{align} for any polynomial $p$ of degree $d < n$.
\end{lemma}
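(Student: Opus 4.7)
The plan is to use the Euler operator $\theta := x\frac{d}{dx}$, which picks out the exponent: since $\theta(x^\alpha) = \alpha x^\alpha$, iteration gives $\theta^k(x^\alpha) = \alpha^k x^\alpha$ for every nonnegative integer $k$. Applied termwise to $f$ and evaluated at $x_0$, this yields
\begin{equation}
(\theta^k f)(x_0) \; = \; \sum_{\alpha} c_\alpha \alpha^k\, x_0^{\alpha}.
\end{equation}
Thus the sums $\sum_\alpha c_\alpha \alpha^k x_0^\alpha$ are exactly the quantities we need to kill when $k<n$, and any polynomial $p$ of degree $d<n$ is a linear combination of the monomials $\alpha^k$ for $k=0,1,\dots,d$.

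The first step is to show $(\theta^k f)(x_0)=0$ for $k=0,1,\dots,n-1$. Assume first that $x_0 \neq 0$. A quick induction on $k$ (or the classical Stirling-number expansion $\theta^k = \sum_{j=0}^k S(k,j)\, x^j \frac{d^j}{dx^j}$) shows that $\theta^k f(x)$ is an $\mathbb{R}[x]$-linear combination of $f(x), f'(x), \dots, f^{(k)}(x)$. Because $f$ has a zero of order $n$ at $x_0$, we have $f^{(j)}(x_0)=0$ for every $j=0,1,\dots,n-1$, and in particular for every $j\le k\le n-1$. Hence $(\theta^k f)(x_0)=0$ for all such $k$.

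The second step combines the vanishings. Write $p(\alpha)=\sum_{k=0}^{d} a_k \alpha^k$ with $d<n$. Then
\begin{equation}
\sum_{\alpha} c_\alpha x_0^{\alpha}\, p(\alpha) \; = \; \sum_{k=0}^{d} a_k \sum_{\alpha} c_\alpha \alpha^k x_0^{\alpha} \; = \; \sum_{k=0}^{d} a_k\, (\theta^k f)(x_0) \; = \; 0,
\end{equation}
which is the desired identity. The edge case $x_0=0$ is immediate and handled separately: a zero of order $n$ at $0$ forces $c_0=c_1=\dots=c_{n-1}=0$, and under the convention $0^0=1$ the sum $\sum_\alpha c_\alpha \cdot 0^\alpha\, p(\alpha)$ reduces to $c_0 p(0)=0$.

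There is no real obstacle here; the only subtlety worth flagging is choosing a bookkeeping device that converts the hypothesis ``zero of order $n$'' (a statement about derivatives of $f$) into the conclusion (a statement about weighted moments of the coefficients $c_\alpha$). The Euler operator $\theta$ is the natural bridge: it records the exponent as a scalar factor while remaining a differential operator, so vanishing of $f^{(j)}(x_0)$ up to order $n-1$ translates directly into vanishing of the power-sum combinations $\sum_\alpha c_\alpha \alpha^k x_0^\alpha$ for $k<n$, which is all that is needed to annihilate any polynomial test function of degree $<n$.
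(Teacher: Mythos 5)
Your proof is correct and is essentially the paper's argument: the paper defines $f_d(x) := \sum_\alpha c_\alpha x^\alpha \alpha^d$ and uses the recursion $f_d(x) = x f_{d-1}'(x)$, which is precisely iterating your Euler operator $\theta = x\,\frac{d}{dx}$, to conclude that $f_d$ has a zero of order $n-d$ at $x_0$, then finishes by linearity exactly as you do. The only cosmetic difference is that the paper tracks the order of vanishing of $\theta^d f$ directly rather than expanding $\theta^k$ in terms of $f, f', \dots, f^{(k)}$.
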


The lemma is proved in Appendix \ref{sec:appendixproof2}.

We are now ready to show that the contributions from the congruence configurations that contain some constants $w_j\neq w_1$ cancel. 

Given any polynomial $p(x) \in\mathbb{R}[x]$ and $\gamma, t\in\mathbb{Z}$, notice the following
\begin{enumerate}
\item If $w_j\neq w_1$, $w_j\neq 0$, and $p$ has degree less than $2n$, then \begin{align} \label{eqn:eqn1}
&\sum_{\alpha = 2n} ^{4nl}c_{\alpha} \left(\frac{k}{w_1N}\right)^{\alpha}\left(\sum_{i = 0}^{m}\binom{m}{i} \left(-\frac{w_1N}{k}\right)^{m-i}p(\alpha+i) w_j^{\alpha + i-\gamma}\left(\frac{N}{k}\right)^{\alpha + i - t}\right) \nonumber\\
=\; & \frac{k^t}{w_j^{\gamma} N^t}\sum_{\alpha = 2n}^{4nl} c_{\alpha}  \left(\frac{w_j}{w_1}\right)^{\alpha}
\left(\sum_{i = 0}^{m}\binom{m}{i} \left(-\frac{w_1N}{k}\right)^{m-i}p(\alpha+i)\left(\frac{w_jN}{k}\right)^{i}\right) \nonumber\\
=\; & \frac{k^t}{w_j^{\gamma} N^t}\sum_{i=0}^{m} \binom{m}{i}\left(-\frac{w_1N}{k}\right)^{m-i}\left(\frac{w_jN}{k}\right)^{i}\sum_{\alpha = 2n}^{4nl} c_{\alpha} \left(\frac{w_j}{w_1}\right)^{\alpha}
p(\alpha+i) \nonumber\\
=\; & 0, \end{align} where we get $\sum_{\alpha = 2n}^{4nl} c_{\alpha} (\frac{w_j}{w_1})^{\alpha}
p(\alpha+i) = 0$ from Lemma \ref{lem_zero} using the fact that $f_1^{2n}(x)  =   \sum_{\alpha = 2n}^{4nl} c_{\alpha} x^{\alpha} \in \mathbb{R}[x]$ has a zero of order $2n$ at $w_j/w_1$.
\item If $w_j = w_1$, and $p$ has degree less than $m$, then
\begin{align} \label{eqn:eqn2}
&\sum_{\alpha = 2n} ^{4nl}c_{\alpha} \left(\frac{k}{w_1N}\right)^{\alpha}\left(\sum_{i = 0}^{m}\binom{m}{i} \left(-\frac{w_1N}{k}\right)^{m-i}p(\alpha+i) w_1^{\alpha + i-\gamma}\left(\frac{N}{k}\right)^{\alpha + i - t}\right) \nonumber\\
=\; & \frac{k^{t-m}}{w_1^{\gamma-m} N^{t-m}}\sum_{\alpha = 2n}^{4nl} c_{\alpha}
\left(\sum_{i = 0}^{m}\binom{m}{i} (-1)^{m-i}p(\alpha+i)\right) \nonumber\\
=\; &0,
\end{align}
where we get $\sum_{i = 0}^{m}\binom{m}{i} (-1)^{m-i}p(\alpha+i) = 0$ from Lemma \ref{lem_zero} using the fact that $(x-1)^m$ has a zero of order $m$ at $1$.
\end{enumerate}

By Lemma \ref{lem:sumform}, we know given a configuration with $\beta$ blocks, the polynomial $p$ in the contribution \eqref{eqn:sumform} has degree $\leq \beta -s +1 $ where $s$ the number of distinct $w$'s in this configuration determined by the chosen congruence classes. In particular, given $\beta\leq m$, the polynomial $p$ has degree $\leq m -1+1 = m$, and whenever both $w_1$ and $w_j\neq w_1$ appear in the configuration, the polynomial $p$ has degree $\leq m- 2 + 1 = m-1$.

From \eqref{eqn:eqn1} and \eqref{eqn:eqn2}, we conclude that the configurations with some $w_j\neq w_1$ do not contribute to the moment.
We may therefore assume that all all $w_j\neq w_1$ are zero. 

\subsection{The special case where all $w_j\neq w_1$ are zero}

We have reduced to the special case where $k_1$ of the $w_j$'s are $w_1$ and the rest $k-k_1$ are $0$.

Following the arguments in \S 3 of \cite{BCDHMSTPY}, we can show that the contributions to the $m$-th moment from all configurations with fewer than $m$ blocks cancel, and the contributions from all configurations with matchings that lose more than $m$ degrees of freedom become insignificant as $N\to\infty$. In particular, by Lemma \ref{lem_contributions}, we are only left with the configurations with $m$ blocks.


\begin{proposition}\label{proposition contribution}
Fix the number of blocks $\beta$, the total contribution of configurations with $m_1$ 1-blocks to $\mathbb{E}\left[\Tr( A_N^{\alpha+i})\right]$ is
\begin{align}
&w_1^{\alpha+i-m_1-2(\beta-m_1)}
\left(\frac{(\alpha+i)^{\beta}}{\beta!}+\tilde{p}(\alpha+i)\right)\binom{\beta}{m_1}
(k-1)^{\beta-m_1}
\mathbb{E}_{k_1}\left[\text{Tr}(B^{m_1})\right]
\left(\frac{N}{k}\right)^{\alpha+i-\beta} \nonumber\\
&+O_{\beta}\left((\alpha+i)^{\beta}
\left(\frac{N}{k}\right)^{\alpha+i-\beta-1}
\right)
\end{align}
where $\tilde{p}$ is a polynomial of degree $\leq \beta-1$.
\end{proposition}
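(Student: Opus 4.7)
The plan is to enumerate configurations with exactly $\beta$ blocks, $m_1$ of size $1$ and $\beta - m_1$ of size $2$, and identify each combinatorial factor in the target formula. By Lemma \ref{lem_contributions}, only these block-length profiles together with the matching pattern in which each $1$-block is paired with exactly one other $1$-block and each $2$-block is self-matched attain the maximal $\alpha + i - \beta$ degrees of freedom; every other configuration with $\beta$ blocks loses at least one further degree of freedom and is absorbed into the stated error term $O_\beta\bigl((\alpha+i)^\beta (N/k)^{\alpha+i-\beta-1}\bigr)$.

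Among the optimal configurations I would factor the count into independent choices. First, choosing which $m_1$ of the $\beta$ blocks (in cyclic order) are $1$-blocks gives the $\binom{\beta}{m_1}$ factor. Second, in the special case where $w_u = w_1$ for exactly $k_1$ residues and $w_u = 0$ for the remaining $k - k_1$, a gap between two consecutive blocks contributes non-trivially only if its residue class lies among those $k_1$; this forces every $w$-entry in a surviving configuration to carry the weight $w_1$, yielding the prefactor $w_1^{\alpha + i - m_1 - 2(\beta - m_1)}$, whose exponent is exactly the total number of $w$-entries. Third, the self-matching of each $2$-block forces $i_\ell = i_{\ell+2}$, so the surrounding gap class persists across the $2$-block while the middle index $i_{\ell+1}$ may lie in any of the $k - 1$ other residues (off-diagonality), giving $(k-1)^{\beta - m_1}$. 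Fourth, at the maximal number of degrees of freedom each of the $\alpha + i - \beta$ free indices ranges over roughly $N/k$ integers of its residue class, producing $(N/k)^{\alpha + i - \beta}$.

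The crux of the argument is identifying the $1$-block subwalk with a hollow GOE moment. Collapsing every $2$-block (which leaves the surrounding class unchanged) reduces the remaining structure to a cyclic closed walk of length $m_1$ on $\{1,\dots,k_1\}$ with consecutive classes distinct (off-diagonality of each $1$-block) and with the $m_1$ steps paired according to the matching; summing the mean-zero, variance-one Gaussian weights over all matchings and index choices reproduces exactly the combinatorial expansion of $\mathbb{E}_{k_1}[\text{Tr}(B^{m_1})]$, since this moment is by definition the weighted count of paired closed walks of length $m_1$ on the loop-free complete graph $K_{k_1}$. Finally, distributing the $\alpha + i - m_1 - 2(\beta - m_1)$ $w$-entries into the $\beta$ gaps with every gap non-empty, together with the cyclic placement of the block pattern, yields a polynomial in $\alpha + i$ of degree $\beta$ whose leading coefficient is $1/\beta!$, with the remaining lower-order terms collected into $\tilde p(\alpha + i)$ of degree at most $\beta - 1$. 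The principal obstacle is the hollow-GOE identification: one must produce a weight-preserving bijection between the labelled configurations (with possibly crossing matchings of $1$-blocks) and the cyclic paired walks on $K_{k_1}$ defining the hollow GOE moments, verifying that every matching pattern, crossing or non-crossing, contributes with the correct Gaussian multiplicity.
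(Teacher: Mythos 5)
Your proposal is correct and follows essentially the same route as the paper's proof in Appendix \ref{app:blip}: restriction via Lemma \ref{lem_contributions} to configurations of $1$- and $2$-blocks with the optimal matchings (everything else absorbed into the error term), the factors $\binom{\beta}{m_1}$, $(k-1)^{\beta-m_1}$, $w_1^{\alpha+i-m_1-2(\beta-m_1)}$ and $\left(\frac{N}{k}\right)^{\alpha+i-\beta}$ arising from the same independent choices, and the identification of the cyclic paired walk of the $1$-blocks over the $k_1$ admissible residues (with consecutive residues distinct) with $\mathbb{E}_{k_1}\left[\text{Tr}(B^{m_1})\right]$. No changes are needed.
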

The proof follows closely from that of Proposition 3.15 in \cite{BCDHMSTPY}, and is given in Appendix \ref{app:blip}.

\begin{proposition} \label{prop:expmoment}
    The expected $m$-th moment in the limit is \begin{equation}
    \lim\limits_{N\to\infty}\mathbb{E}\left[\mu_{A_N,1}^{(m)}\right] \; = \;\frac{1}{k_1}\sum_{m_1=0}^m\binom{m}{m_1}\left(\frac{k-1}{w_1}\right)^{m-m_1}\mathbb{E}_{k_1}\left[\text{Tr}(B^{m_1})\right].
\end{equation}
\end{proposition}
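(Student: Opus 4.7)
The plan is to substitute Proposition \ref{proposition contribution} into the blip moment formula \eqref{eqn:blipmoment} and split the analysis by the number of blocks $\beta$. For each fixed $\beta$ and each count $m_1$ of $1$-blocks, the contribution to $\mathbb{E}[\mu_{A_N,1}^{(m)}]$ acquires a triple sum over $\alpha$ and $i$ of
\[
\frac{\binom{\beta}{m_1}(k-1)^{\beta-m_1}\,\E_{k_1}[\Tr(B^{m_1})]}{k_1}\,c_\alpha\left(\frac{k}{w_1N}\right)^{\alpha}\binom{m}{i}\left(-\frac{w_1N}{k}\right)^{m-i}w_1^{\alpha+i-m_1-2(\beta-m_1)}\left(\tfrac{(\alpha+i)^{\beta}}{\beta!}+\tilde p(\alpha+i)\right)\left(\tfrac{N}{k}\right)^{\alpha+i-\beta},
\]
plus an error term that is negligible by the same reasoning as \eqref{equation 3.5}.

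Next I would do a three-way case analysis on $\beta$. For $\beta>m$ the $N$-exponent budget fails: Lemma \ref{lem_contributions} forces the lost degrees of freedom to be at least $\beta\ge m+1$, and the calculation in \eqref{equation 3.5} shows the contribution is $O(N^{-1}|f_1(k/w_1)|^{2n})$, which vanishes because $n(N)=O(\log\log N)$. For $\beta<m$, the polynomial in $\alpha+i$ multiplying the $i$-sum has degree at most $\beta\le m-1$, so Lemma \ref{lem_zero} applied to $(x-1)^m$ (which vanishes to order $m$ at $1$) forces $\sum_{i=0}^m\binom{m}{i}(-1)^{m-i}p(\alpha+i)=0$ for every $\alpha$, killing this contribution entirely. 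Only $\beta=m$ survives.

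For $\beta=m$ I would invoke the classical finite-difference identity $\sum_{i=0}^m\binom{m}{i}(-1)^{m-i}i^{j}=m!\,\delta_{j,m}$, which annihilates $\tilde p$ and every subleading piece of $(\alpha+i)^m/m!$, leaving only the pure $i^m$ term. A direct accounting of the remaining factors shows that every power of $N$ cancels, the outer $\alpha$-sum collapses to $\sum_{\alpha}c_\alpha=f_1^{2n}(1)=1$ by the normalization built into $f_1^{2n}$, and the combined $w_1$- and $k$-powers reduce to $((k-1)/w_1)^{m-m_1}$. Summing over $m_1=0,\dots,m$ then yields the stated expression.

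The main obstacle is the careful bookkeeping of the competing powers of $w_1$, $k$, and $N$ spread across the outer $\alpha$-sum, the inner $i$-sum, and the $w_1$-prefactor from Proposition \ref{proposition contribution}; they must cancel exactly for a finite nonzero limit to emerge. Conceptually the cancellations for $\beta\neq m$ drive the theorem and both reduce cleanly to Lemma \ref{lem_zero}, while the arithmetic for $\beta=m$ is routine once the binomial identity is in hand.
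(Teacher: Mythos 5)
Your proposal is correct and mirrors the paper's own proof in Appendix D: both substitute Proposition \ref{proposition contribution} into \eqref{eqn:blipmoment}, eliminate $\beta>m$ via the degree-of-freedom bound in \eqref{equation 3.5}, kill $\beta<m$ via Lemma \ref{lem_zero} applied to $(x-1)^m$, and then close out $\beta=m$ with the finite-difference identity $\sum_{i=0}^m\binom{m}{i}(-1)^{m-i}i^j=m!\,\delta_{j,m}$ together with $\sum_\alpha c_\alpha=f_1^{2n}(1)=1$. The bookkeeping of the $w_1$-, $k$-, and $N$-powers you flag as the main obstacle is likewise left implicit in the paper's write-up, so your level of detail matches theirs.
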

The proof follows closely from that of Theorem 3.18 in \cite{BCDHMSTPY}, and is given in Appendix \ref{app:blip2}. 

Note that the expected first moment in the limit is
\begin{equation}
    \lim\limits_{N\to\infty}\mathbb{E}\left[\mu_{A_N,1}^{(1)}\right] \; = \; \frac{k-1}{w_1}. \end{equation}

Following the same calculation as in Theorem 3.18 of \cite{BCDHMSTPY}, we obtain the centered $m$-th moment 
\begin{align}\label{moment}
    \mu_{c,1}^m:= \;
    \lim_{N \to \infty}\mathbb{E}\left[\int(x-\mu_{A_N,1}^{(1)})^m d\mu_{AN,1}\right] \;
    =\;\frac{1}{k_1}
    \mathbb{E}_{k_1}\left[\text{Tr}(B^{m})\right].
\end{align}

\section{Weak and Absolute Convergence of the Blip Spectra Measure} \label{sec:convergence}

In this section we establish the convergence result for the blip spectra measure. We first use the standard technique to show weak convergence for a blip of size one, and then we follow \cite{BCDHMSTPY} to established a modified spectral measure and prove its convergence.

\subsection{Weak convergence for blip of size $1$}

\begin{defi}(Weak Convergence).
A family of probability distribution $\mu_n$ weakly converges to $\mu$ if and only if for any bounded, continuous $f$ we have
\[\lim_{n \to \infty}\int_{-\infty}^{\infty} f(x)\mu_n(dx)=\int_{-\infty}^{\infty} f(x)\mu(dx).\]

\end{defi}
Since $\mu_{A_N, 1}^{(m)}$ is finite, to prove weak convergence we should prove the variance of expected $m^{\text{th}}$ tends to zero as $N$ goes to infinity. That is,
\[\lim_{N \to \infty}\mathbb{E}[(\mu^{(m)}_{A_N,1})^2]-\mathbb{E}[(\mu^{(m)}_{A_N,1})]^2=0.\]

By \eqref{eqn:blipmoment}, we have that
\begin{align}
    \mathbb{E}[(\mu^{(m)}_{A_N,1})^2] \: = \: & \frac{1}{k_1^2}\sum_{\alpha=2n}^{4nl}\sum_{\beta=2n}^{4nl}c_{\alpha}c_{\beta}\sum_{i=0}^m\sum_{j=0}^m{m \choose i}{m \choose j}(-1)^{i+j}(\frac{w_1N}{k})^{2m-(i+j)-(\alpha+\beta)}\nonumber\\ \: & \left(\sum_{\substack{C_{i+\alpha}\\C_{j+\beta}}}(\mathbb{E}[C_{i+\alpha}C_{j+\beta}]\right)\label{E^2 1}, \\
    \mathbb{E}\left[\mu_{A_N,1}^{(m)}\right]^2 \; = \; & \frac{1}{k_1^2} \sum_{\alpha = 2n}^{4n}\sum_{\beta = 2n}^{4n} c_{\alpha} c_{\beta} \sum_{i = 0}^{m}\sum_{j = 0}^{m} \binom{m}{i}\binom{m}{j}(-1)^{i+j}\left(\frac{w_1N}{k}\right)^{2m-(i+j)-(\alpha+\beta)}\nonumber\\ \, & \left(\sum_{\substack{C_{i+\alpha}, \\ C_{j+\beta}}}\mathbb{E}\left[C_{i+\alpha}\right]\mathbb{E}\left[C_{j+\beta}\right]\right)\label{E^2 2},
\end{align}
where $C_{t}$ denotes the cycle $m_{i_1i_2}m_{i_2i_3}\dots m_{i_ti_1}$
Notice that the difference cancels unless there exists $a_{t_1t_2}$ such that $a_{t_1t_2} \in C_{i+\alpha}$ and $C_{j+\beta}$. Therefore we only need to count the pair of cycles where $C_{i+\alpha}$ and $C_{j+\beta}$ has at least one common $a$. We call such pair of cycles the crossover terms.

\begin{lemma}
The contributions of crossover terms to $\mathbb{E}[(\mu^{(m)}_{A_N,1})]^2$ is 0 as $N \to \infty$.
\end{lemma}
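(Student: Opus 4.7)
The plan is to show the crossover contribution is $O(N^{-1})$ by demonstrating that joining the two cycles $C_{i+\alpha}$ and $C_{j+\beta}$ via a shared $a$ costs at least one extra degree of freedom compared to the independent case. First I would treat each crossover pair as a single cyclic product of total length $(i+\alpha)+(j+\beta)$ and apply the block, configuration, matching, and indexing framework of Section~\ref{sec:blip} to the joined object. Lemma~\ref{lem:sumform} then represents each joint congruence configuration's contribution in the form $p(\alpha+i+\beta+j)\, w_1^{\alpha+i+\beta+j-\gamma}\,(N/k)^{\alpha+i+\beta+j-t}$ with $t$ the total number of lost degrees of freedom; combined with the prefactor $(k/(w_1 N))^{\alpha+\beta}(-w_1 N/k)^{2m-(i+j)}$ coming from \eqref{E^2 1}, the contribution is of order $N^{2m-t}$.

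The key combinatorial claim I would prove is that every crossover matching satisfies $t \geq \beta_{\text{total}} + 1$, where $\beta_{\text{total}} = \beta_1 + \beta_2$ is the total number of blocks in the two cycles. This is a one-unit sharpening of Lemma~\ref{lem_contributions} in the joined setting: each crossing pair of $a$'s identifies at least one pair of indices across the two cycles that is not already forced by the individual matchings within each cycle, costing one additional degree of freedom. The argument mirrors the per-block averaging logic noted in Remark~\ref{Remark of Lemma 3.11}, now applied to the joined matching graph which necessarily contains at least one inter-cycle edge.

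Armed with this bound I would rerun the cancellation argument of Section~\ref{sec:blip} on the joint setup. The same applications of $f_1^{2n}(1) = 1$ and Lemma~\ref{lem_zero} eliminate joint configurations with $\beta_{\text{total}} < 2m$ or containing some $w_j \neq w_1$, so only joint configurations with $\beta_{\text{total}} = 2m$ and entries only equal to $w_1$ survive. For these, the crossover bound $t \geq 2m + 1$ gives an $O(N^{-1})$ contribution per configuration, and summing the $O(c^{n(N)})$ remaining configurations together with $n(N) = O(\log \log N)$ shows the total crossover contribution is $o(1)$, which is the content of the lemma.

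The main obstacle is the combinatorial claim $t \geq \beta_{\text{total}} + 1$ for joined configurations. Establishing it would proceed by deleting a crossing pair of $a$'s and comparing degree counts: the deletion either yields two legal independent matchings (in which case the original crossing contributes a fresh index identification and the bound is immediate) or leaves behind compensating constraints that still net an extra unit of loss. Handling the mixed $1$-block/$2$-block patterns that saturate Lemma~\ref{lem_contributions} will be the most delicate part, since the saturation argument has no slack and we must verify the crossing edge always introduces genuinely new information across the two cycles.
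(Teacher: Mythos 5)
Your core combinatorial insight---that the crossover constraint forces at least one extra lost degree of freedom, so the total lost is at least $\beta_{\text{total}}+1 \geq 2m+1$---is exactly what the paper uses. But the route you take to set up the cancellation has a structural mismatch with the quantity $\mathbb{E}\bigl[\mu^{(m)}_{A_N,1}\bigr]^2$ this lemma is about (note: it is the \emph{square of the expectation}, equation \eqref{E^2 2}, not $\mathbb{E}\bigl[(\mu^{(m)}_{A_N,1})^2\bigr]$ from \eqref{E^2 1}, which is what your prefactor citation points to).

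The paper does not ``join'' the two cycles into a single cyclic product. It instead observes that $\mathbb{E}\bigl[\mu^{(m)}_{A_N,1}\bigr]^2$ is literally a product of two independent expectations, each of which is a sum over a single cyclic product, so for a fixed pair of congruence configurations the combinatorial count factorizes as $p_1(\alpha+i)\,p_2(\beta+j)$ with $\deg p_1 \leq b_1$ and $\deg p_2 \leq b_2$ (separate applications of Lemma~\ref{lem:sumform}). The cancellation of \eqref{eqn:eqn1}--\eqref{eqn:eqn2} then acts \emph{per factor}: if either $b_1 < m$ or $b_2 < m$, the corresponding factor's $\alpha,i$ (or $\beta,j$) sum already vanishes, which is how the paper gets the $b_1 + b_2 \geq 2m$ threshold. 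Your version applies Lemma~\ref{lem:sumform} to a ``joined'' cyclic product of length $\alpha+i+\beta+j$ to produce a single polynomial $p(\alpha+i+\beta+j)$. That object does not correspond to $\mathbb{E}[C_1]\mathbb{E}[C_2]$: a genuinely joined cycle would admit matchings between $a$'s of $C_1$ and $a$'s of $C_2$, which are forbidden in the product of expectations (those cross-cycle matchings are precisely what distinguish $\mathbb{E}[(\mu^{(m)})^2]$ from $\mathbb{E}[\mu^{(m)}]^2$). The joined framing is therefore the right tool for the \emph{other} quantity, and using it here requires either re-deriving a pair version of Lemma~\ref{lem:sumform} or implicitly using the factorization anyway. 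As it happens, the numerical threshold $2m$ comes out the same because the double $\binom{m}{i}(-1)^{m-i}$ / $\binom{m}{j}(-1)^{m-j}$ identity shaves off $m$ degrees twice, but the justification you give does not actually establish this, and a reader following your outline literally would be stuck at the step ``apply Lemma~\ref{lem:sumform} to the joined object.''

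Finally, your ``deletion of a crossing pair and compare degree counts'' strategy for proving $t \geq \beta_{\text{total}}+1$ is more machinery than the situation needs. Because the matchings in $\mathbb{E}[\mu^{(m)}]^2$ are internal to each cycle, Lemma~\ref{lem_contributions} already gives $\geq b_1$ lost degrees within $C_{i+\alpha}$ and $\geq b_2$ within $C_{j+\beta}$; the crossover constraint then forces the index pair of the shared $a$ in $C_{j+\beta}$ to coincide with that of $C_{i+\alpha}$, which is one genuinely new identification on top of the independent counts. That one sentence is the paper's entire justification, and it suffices exactly because there is no interplay between the two cycles' matchings to cancel the extra constraint.
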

\begin{proof}
$\mathbb{E}[(\mu^{(m)}_{A_N,1})]^2$ is the product of \[\frac{1}{k_1}\sum_{\alpha=2n}^{4nl}c_{\alpha}\sum_{i=0}^m {m \choose i}(-1)^{m-i}(\frac{w_1N}{k})^{m-i-\alpha}\sum_{C_{i+\alpha}}\mathbb{E}[C_{i+\alpha}]\]
and
\[\frac{1}{k_1}\sum_{\beta=2n}^{4nl}c_{\beta}\sum_{i=0}^m {m \choose j}(-1)^{m-j}(\frac{w_1N}{k})^{m-j-\beta}\sum_{C_{j+\beta}}\mathbb{E}[C_{j+\beta}].\]

Suppose we fix a pair of congruence configurations of $C_{i+\alpha}$ and $C_{i+\alpha}$ such that there is a common $a_{t_1t_2}$ in the two cycles, and $C_{i+\alpha}$ has $b_1$ blocks while $C_{j+\beta}$ has $b_2$ blocks. If either of $b_1, b_2$ is less than $m$, then by $\ref{eqn:eqn1}$ and $\ref{eqn:eqn2}$ their product makes $0$ contribution.
So we know the that configuration contributes only when $b_1+ b_2 \geq 2m$. By Lemma $\ref{lem_contributions}$, each block loses at least $1$ degree of freedom. However, due to the common $a_{t_1t_2}$ of $C_{i+\alpha}$ and $C_{j+\beta}$, there is a block that loses at least $2$ degrees of freedom, so in total at least $b_1+b_2+1 \geq 2m+1$ degree of freedom is lost. Thus by $\eqref{equation 3.5}$, the crossover terms contribute to 0 when $N \to \infty$.
\end{proof}
Now it is sufficient to look at the contribution from crossovers to
$\mathbb{E}[(\mu^{(m)}_{A_N,1})^2]$. For general $k_1$, the contributions of the crossovers doesn't necessarily go to $0$ as $N \to \infty$. We want to show that for $k_1=1$, the contribution from the crossovers does go to $0$. In order to show this, we first reduce the general $W$ to the simplest case where all $w_j\neq w_1$ are zero. 

\begin{lemma}\label{E^2 nothing}
The contribution from the congruence configurations that contain $w_j \neq w_1$ to  $\mathbb{E}[(\mu^{(m)}_{A_N,1})^2]$ is $0$.
\end{lemma}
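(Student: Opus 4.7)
The plan is to mirror the cancellation argument for the first moment $\mathbb{E}[\mu_{A_N,1}^{(m)}]$ at \eqref{eqn:eqn1}--\eqref{eqn:eqn2}, now applied to each cycle in the squared-moment expansion. Expanding $\mathbb{E}[(\mu^{(m)}_{A_N,1})^2]$ via the eigenvalue trace lemma gives a double sum over pairs of cycles $C_{i+\alpha}$ and $C_{j+\beta}$. Fixing a pair of congruence configurations and a matching on the $a$'s (possibly with crossovers linking the two cycles), the key observation is that each $w$-entry belongs to exactly one of the two cycles, so the combinatorics of placing $w$'s into the gaps between blocks factorizes between the cycles, even when $a$-crossovers are present; only the combinatorial factor $\mathcal{A}$ coming from the expectation of the $a$-product couples the cycles.

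Applying Lemma \ref{lem:poly} (and the proof of Lemma \ref{lem:sumform}) separately to each cycle, the contribution from a fixed pair of configurations and matching, summed over indexings, takes the form
\begin{equation*}
\mathcal{A}\,(N/k)^{\alpha+i+\beta+j-t}\sum_{l_1,l_2} p_{l_1}(\alpha+i)\,q_{l_2}(\beta+j)\,w_{l_1}^{\alpha+i-\gamma_1}\,w_{l_2}^{\beta+j-\gamma_2},
\end{equation*}
where $l_1$ (resp.\ $l_2$) ranges over distinct constants in cycle $1$ (resp.\ $2$) and $\deg p_{l_1}\le\beta_1-s_1+1$, $\deg q_{l_2}\le\beta_2-s_2+1$ in terms of the block counts $\beta_1,\beta_2$ and numbers of distinct constants $s_1,s_2$. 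The combined $N$-dependence simplifies to $(N/k)^{2m-t}$; since each block loses at least one degree of freedom (the two-cycle analogue of Lemma \ref{lem_contributions}), configurations with $\beta_1+\beta_2>2m$ contribute negligibly, so I restrict to $\beta_1+\beta_2\le 2m$. Suppose the pair of configurations contains some $w_j\ne w_1$, without loss of generality in cycle $1$. Then cycle $1$'s sum over $l_1$ splits into three cases: if $w_{l_1}\notin\{0,w_1\}$, the $\alpha$-sum reduces to $\sum_\alpha c_\alpha(w_{l_1}/w_1)^\alpha p_{l_1}(\alpha+i)$, which vanishes by Lemma \ref{lem_zero} because $f_1^{2n}$ has a zero of order $2n$ at $w_{l_1}/w_1$ and $\deg p_{l_1}<2n$ for $n$ large; if $w_{l_1}=0$, the factor $w_{l_1}^{\alpha+i-\gamma_1}$ vanishes as soon as $\alpha\ge 2n$; if $w_{l_1}=w_1$, the mixed condition $s_1\ge 2$ forces $\deg p_{l_1}\le\beta_1-1$, and the $i$-sum reduces to $\sum_i\binom{m}{i}(-1)^{m-i}p_{l_1}(\alpha+i)$, which vanishes by Lemma \ref{lem_zero} applied to $(x-1)^m$ provided $\beta_1\le m$. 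In the remaining sub-case $\beta_1>m$, the bound $\beta_1+\beta_2\le 2m$ forces $\beta_2<m$, so cycle $2$'s $j$-sum vanishes by the analogous argument on $q_{l_2}$ for every $l_2$. In every case, the total contribution is zero.

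The chief technical subtlety is the two-cycle analogue of the degree-of-freedom tracking in Lemma \ref{lem_contributions}, especially for matchings with crossovers between the two cycles; the key point, however, is that crossovers can only increase the loss $t$ without affecting the factorization of the $w$-combinatorics between cycles, so the cancellation argument above goes through unchanged.
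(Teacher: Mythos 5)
Your proof is correct and follows essentially the same strategy as the paper: bound the total number of blocks by $b_1+b_2\le 2m$ via the degree-of-freedom count, use Lemma \ref{lem:sumform} to get the polynomial degree bounds $\deg p_{l_1}\le\beta_1-s_1+1$ and $\deg q_{l_2}\le\beta_2-s_2+1$, and then invoke the cancellation in \eqref{eqn:eqn1} and \eqref{eqn:eqn2}. The only cosmetic difference is that the paper argues from the combined degree bound $\deg p_1+\deg p_2\le b_1+b_2+2-s-s'\le 2m-1$ and a ``without loss of generality one has degree $\le m-1$'' step, while you reach the same conclusion by splitting explicitly on $\beta_1\le m$ versus $\beta_1>m$ and on the value of $w_{l_1}$ (including the trivial $w_{l_1}=0$ case the paper leaves implicit).
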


\begin{proof}
Fix a pair of congruence configuration. Say $w_{i_1}, w_{j_2},\dots, w_{j_s}$
appears in the cyclic product
$C_{i+\alpha}$ and $w_{j_q}$ appears $x_q$ times, separated by the blocks into $y_q$ parts.
$w_{j'_{1}},\dots,w_{j'_{s'}}$ appears in the cyclic product $C_{j+\beta}$, $w_{j_q'}$ appears
${x'}_{q'}$ times and are separated by the blocks into $y_{q'}$ parts.
The sum of $y$s should be the total number of blocks, so we have $y_1+\cdots +y_s+{y'}_1+\cdots +{y'}_{s'}=b_1+b_2$.
By Lemma $\ref{lem_contributions}$, the total lost degree of freedom is at least $b_1+b_2$. On the other hand, by \eqref{equation 3.5} we know that the total lost of degree of freedom should be at most $2m$. Therefore we have $b_1+b_2 \leq 2m$.

By Lemma $\ref{lem:sumform}$, with the congruence configuration fixed, the total number of ways to place $w_j$ and $w_{j'}$ is
\begin{equation}(\alpha+i)\sum_{\substack{x_i \geq y_i \\ x_1+\cdots +x_s=\alpha+i-r_1}} \prod_{q=1}^s {x_q-1 \choose y_q-1}w_{j_q}^{x_q}(\beta+j)\sum_{\substack{x'_{i'}\geq y'_{i'}\\ x'_{1}+\cdots +x'_{s'}=\beta+i-r_2 }}\prod_{q'=1}^{s'} {x'_{q'}-1 \choose y'_{q'}-1}w_{j_q'}^{x_q'}\end{equation}
where $r_1,r_2$ are the number of $a$ in each cycle. Since $y_i, y'_{i'}$ are fixed, the above expression can be written as
\begin{align}\label{above equation}
&    (\alpha+i)\sum_{\substack{x_i \geq y_i \\ x_1+\cdots +x_s=\alpha+i-r_1}}p_{y_1,\dots, y_s}(x_1,\dots, x_s)w_{j_1}^{x_1}\cdots w_{j_s}^{x_s}(\beta+j) \nonumber\\ & \ \ \ \ \ \ \ \ \ \ \ \ \cdot \ \sum_{\substack{x'_{i'}\geq y'_{i'}\\ x'_{1}+\cdots +x'_{s'}=\beta+i-r_2 }}p_{y'_1,\dots, y'_{s'}}(x'_1,\dots, x'_{s'})w_{j'_1}^{x'_1}\cdots w_{j'_{s'}}^{x'_{s'}}
\end{align}
where  $p_{y_1,\dots, y_s}$ and $p_{y'_1,\dots, y'_{s'}}$ are polynomials with variables $x_i, x'_{i'}$, and the sum of their degree is  $y_1+\cdots +y_s+y'_1+\cdots +y'_{s'}-s-s'=b_1+b_2-s-s' $. Then \eqref{above equation} is a sum of terms of the form $p_1(\alpha+j)w_j^{\alpha+j-\gamma_1}p_2(\beta+i)w_{j'}^{\beta+i-\gamma}$, where sum of degrees of $p_1$ and $p_2$ is $b_1+b_2+2-s-s'$,  $s$ or $s'$ should be at least $2$. Since $s, s' \geq 1, b_1+b_2 \leq 2m$, the sum of degree of $p_1$ and $p_2$ would be at most $2m-1$. Therefore at least one of $p_1,p_2$ will have degree $\leq m-1$. Without loss of generality say $p_1$ has degree $\leq m-1$. Then by $\eqref{eqn:eqn1}$ and $\eqref{eqn:eqn2}$
\begin{equation}\sum_{\alpha = 2n} ^{4nl}c_{\alpha} \left(\frac{k}{w_1N}\right)^{\alpha}\left(\sum_{i = 0}^{m}\binom{m}{i} \left(-\frac{w_1N}{k}\right)^{m-i}p_1(\alpha+i) w_j^{\alpha + i-\gamma}\left(\frac{N}{k}\right)^{\alpha + i - t}\right)\ = \ 0. \end{equation}
Therefore, the contribution of the terms $p_1(\alpha+j)w_j^{\alpha+j-\gamma_1}p_2(\beta+i)w_{j'}^{\beta+i-\gamma}$ to will be 0. Thus the contribution from congruence configurations containing $w_j \neq w_1$ to $\mathbb{E}[(\mu^{(m)}_{A_N,1})^2]$ is 0 as $N \to \infty$.

\end{proof}

Now we can restrict ourselves to the simplest case where $w_j \neq w_1$ are all $0$. We want to prove that when $k_1=1$, the contribution from the crossovers to $\mathbb{E}[(\mu^{(m)}_{A_N,1})^2]$ is $0$.
Assume $w_1\neq 0$ and $w_2 = \dots = w_k = 0$.

\begin{thm}
When $k_1=1$ and $w_j=0$ for all $w_j \neq w_1$, we have
\begin{align}
		\lim\limits_{N\to\infty}\text{Var}\left[\mu_{A_N,1}^{(m)}\right] \; = \; 0.
	\end{align}
\end{thm}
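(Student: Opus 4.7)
The strategy is to reduce to the case where only $w_1$ occurs in any congruence configuration, using the same polynomial-cancellation reasoning at the pair-configuration level that drove the preceding lemma, and then to show that the crossover contributions to $\mathbb{E}[(\mu^{(m)}_{A_N,1})^2]$ vanish in the limit when $k_1=1$. The non-crossover pairs in $\mathbb{E}[(\mu^{(m)}_{A_N,1})^2]$ factor as products of single-cycle expectations by independence, and these cancel against $\mathbb{E}[\mu^{(m)}_{A_N,1}]^2$ once its own crossover terms are discarded by the preceding lemma. Hence the variance is captured entirely by the crossover contribution to $\mathbb{E}[(\mu^{(m)}_{A_N,1})^2]$.

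The decisive structural input from $k_1=1$ is that no $1$-block can occur in any surviving configuration: a $1$-block $a_{xy}$ flanked by two $w_1$-entries forces $x\equiv y\equiv 1\pmod{k}$, but then by Definition \ref{generalized cb} the entry $m_{xy}$ itself equals $w_1$ rather than a random $a$, a contradiction. By Lemma \ref{lem_contributions} the surviving configurations in each cycle therefore consist entirely of internally-matched $2$-blocks $a_{pq}a_{qp}$ with endpoint index $p\equiv 1\pmod{k}$ and internal index $q\not\equiv 1\pmod{k}$. In a crossover pair $(C_{i+\alpha},C_{j+\beta})$ sharing a random variable $a_{pq}$, that shared entry must sit in a $2$-block of each cycle. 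The set equality $\{p_1,q_1\}=\{p_2,q_2\}$ rules out the transposition $p_1=q_2$ because endpoint and internal indices lie in different residue classes modulo $k$, forcing the two genuine equalities $p_1=p_2$ and $q_1=q_2$. This costs two extra degrees of freedom beyond the $b_1+b_2$ lost to the blocks themselves.

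To turn this into a quantitative bound, apply the cancellation behind \eqref{eqn:eqn1}--\eqref{eqn:eqn2} (supplied by Lemma \ref{lem:sumform}) separately to the $i$- and $j$-sums to kill all configurations with $b_1<m$ or $b_2<m$; what survives has $b_1+b_2\ge 2m$, so the total loss of degrees of freedom is at least $2m+2$. The analogue of the estimate in \eqref{equation 3.5} then bounds each surviving configuration by $O(N^{-2})$ times a factor of order $|f_1(k/w_1)|^{4n}$ coming from summing over $\alpha,\beta\le 4nl$; because $n(N)=O(\log\log N)$, this factor is only polylogarithmic in $N$, and the total is $o(1)$. The main obstacle is confirming that the ``extra two'' degrees of freedom are genuinely lost---that no rearrangement of $w_1$-chains and no alternative matching of $a_{pq}$ across the two cycles can recover them. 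This failure is special to $k_1=1$: when $k_1>1$ one can cross-match pairs of $1$-blocks between the two cycles in a way that preserves the degree-of-freedom count, which is exactly what lets the variance persist in the general case.
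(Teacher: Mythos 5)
Your proposal is correct and follows essentially the same route as the paper: reduce to the all-$w_1$ case, discard pairs with $b_1<m$ or $b_2<m$ by the binomial/zero-of-high-order cancellation, observe that $k_1=1$ forbids $1$-blocks so only internally matched $2$-blocks survive, and conclude that a crossover then forces strictly more than $2m$ lost degrees of freedom. The only (immaterial) difference is that the paper phrases the final step as ``the extremal matching forced by $b_1+b_2\le 2m$ admits no crossover,'' whereas you count the extra lost degrees of freedom explicitly.
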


\begin{proof}
We are left to prove that the contributions from crossovers to $\mathbb{E}[(\mu^{(m)}_{A_N,1})^2]$ is $0$.

Fix the pair of congruence configuration at $C_{i+\alpha}$ and $C_{j+\beta}$. Suppose there are $b_1$ blocks in $C_{i+\alpha}$ and $b_2$ blocks in $C_{j+\beta}$.

If $b_1< m$ or $b_2 < m$, then \begin{equation*}
	\sum_{i=0}^{m}\sum_{j = 0}^{m}\binom{m}{i}\binom{m}{j}(-1)^{2m-i-j}i^{p'}j^{q'} \;= \;	\sum_{i=0}^{m}\binom{m}{i}(-1)^{m-i}i^{p'}	\sum_{j = 0}^{m}\binom{m}{j}(-1)^{m-i}j^{q'} \;= \;0
	\end{equation*} for all integers $0\leq p'\leq b_1$ and $0\leq q'\leq b_2$, so that the contributions from this configuration cancel out. So we only need to look at configurations with $b_1 \geq m$ and $b_2 \geq m$.
	
Now notice that in this $W$, $w_j = 0$ for all $j\not\equiv 1 \;(k)$. Thus, if there is some 1-block in $C_{i+\alpha}$ or $C_{j+\beta}$, then both $\mathbb{E}\left[C_{i+\alpha}C_{j+\beta}\right]$ and $ \mathbb{E}\left[C_{i+\alpha}\right]$ $\mathbb{E}\left[C_{j+\beta}\right]$ are $0$. Therefore, we can restrict ourselves to the configurations where all the blocks are $2$-blocks.

By Lemma $\ref{lem_contributions}$ and Equation $\eqref{equation 3.5}$, We only need to consider congruence configurations where $b_1+b_2 \leq 2m$. Combining with $b_1 \geq m, b_2 \geq m$, We require $b_1=b_2=m$, and both $a$'s in 2-blocks matched with their adjacent entry. But then crossover matchings between $C_{i+\alpha}$ and $C_{j+\beta}$ become impossible. Therefore, we conclude that
\begin{align}
		\lim\limits_{N\to\infty}\text{Var}\left[\mu_{A_N,1}^{(m)}\right] \; = \; 0.
	\end{align}
\end{proof}
\subsection{Absolute Convergence of modified blip spectral measure }

We have computed the expected $m$-th moment $\mathbb{E}\left[\mu_{A_N,1}^{(m)}\right]$ of the empirical blip measure around $Nw_1/k$. However, given one matrix $A_N$ from our ensemble, the $m$-th moment $\mu_{A_N,1}^{(m)}$ of its empirical blip measure do not necessarily converge to this system average $\mathbb{E}\left[\mu_{A_N,1}^{(m)}\right]$, as we will show in the following example that $\lim\limits_{N\to\infty}\text{Var}\left[\mu_{A_N,1}^{(2)}\right] =\lim\limits_{N\to\infty}\left( \mathbb{E}\left[\left(\mu_{A_N,1}^{(2)}\right)^2\right]-\mathbb{E}\left[\mu_{A_N,1}^{(2)}\right]^2\right) > 0$.

\begin{exa}{\label{example}}
	Consider the special case where all $w_j\neq w_1$ are zero and $k_1> 1$. We can obtain the expression of $\text{Var}\left[\mu_{A_N,1}^{(2)}\right]$ by plugging $m=2$ into Equations \eqref{E^2 1} and \eqref{E^2 2}. By Lemma $\ref{E^2 nothing}$ we only need to calculate $\mathbb{E}\left[\left(\mu_{A_N}^{(2)}\right)^2\right]$.

The configurations that neither cancel out nor contribute insignificantly in the limit have two 1-blocks in both $C_{i+\alpha}$ and $C_{j+\beta}$ with cross-over matching. Its contribution to $\sum_{\substack{C_{i+\alpha}, \\ C_{j+\beta}}}\mathbb{E}\left[C_{i+\alpha}C_{\beta+j}\right]$ is given by \begin{align}
	2\binom{\alpha+i}{2}\binom{\beta+j}{2}k_1(k_1-1) \left(\frac{Nw_1}{k}\right)^{\alpha+i+\beta+j-4},
	\end{align} where $2\binom{\alpha+i}{2}\binom{\beta+j}{2}$ is from matching and choosing the positions of the 1-blocks and $k_1(k_1-1)$ is from choosing the equivalence classes of the indices.
	Then \begin{align}
	 \lim\limits_{N\to\infty}\text{Var}\left[\mu_{A_N,1}^{(2)}\right]
	\; =
	\; &\frac{1}{k_1^2} \sum_{\alpha = 2n}^{4n}\sum_{\beta = 2n}^{4n} c_{\alpha} c_{\beta} \sum_{i = 0}^{2}\sum_{j = 0}^{2} \binom{2}{i}\binom{2}{j}(-1)^{4-i-j}2\binom{\alpha+i}{2}\binom{\beta+j}{2}k_1(k_1-1)\nonumber\\
	=\; &\frac{2(k_1-1)}{k_1}\sum_{\alpha = 2n}^{4n}\sum_{\beta = 2n}^{4n} c_{\alpha} c_{\beta} \sum_{i = 0}^{2}\binom{2}{i}(-1)^{2-i}\binom{\alpha+i}{2}\sum_{j = 0}^{2} \binom{2}{j}(-1)^{2-j}\binom{\beta+j}{2}\nonumber\\
	=\; & \frac{2(k_1-1)}{k_1}\sum_{\alpha = 2n}^{4n}\sum_{\beta = 2n}^{4n} c_{\alpha} c_{\beta} \nonumber\\
	=\; &\frac{2(k_1-1)}{k_1} \; > \; 0
	\end{align} where we have used $\sum_{i = 0}^{2}\binom{2}{i}(-1)^{2-i}\binom{\alpha+i}{2} = \sum_{j = 0}^{2} \binom{2}{j}(-1)^{2-j}\binom{\beta+j}{2} = 1$.
\end{exa}

Therefore, the traditional way of showing weak convergence and absolute convergence fail here. In order to resolve this, we modify the empirical blip spectral measure by taking average over a large number of matrices and prove that the modified blip spectral measure converges. The definitions and the process of the proof follow closely to Section 5 of \cite{BCDHMSTPY}. The only change needed is the proof of the following lemma.

\begin{lemma}\label{lem_moments_of_moments}
Let $X_{m,N,i}$ be as defined in Definition \ref{defi of X}
Then for any $t \in \N$, the $r$\textsuperscript{th} centered moment of $X_{m,N,i}$ satisfies
\begin{equation}
X_{m,N,i}^{(r)} \ =\ \E\left[\left(X_{m,N,i}-\E[X_{m,N,i}]\right)^r\right]\ =\ O_{m,r}(1)
\end{equation}
as $N$ goes to infinity.
\end{lemma}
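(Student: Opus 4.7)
The plan is to bound $X_{m,N,i}^{(r)}$ by an expansion that parallels the variance computation of \S 4.1 and then carry out a cluster-expansion argument. First I will expand $X_{m,N,i}$ via the eigenvalue trace formula and the polynomial weight $f_1^{2n}$, exactly as in \eqref{eqn:blipmoment}, writing $X_{m,N,i}$ as a scalar multiple of a sum of cyclic products of matrix entries of $A_N^{(i)}$. Then
\begin{equation}
\left(X_{m,N,i}-\E[X_{m,N,i}]\right)^{r} \;=\; \sum_{\mathbf{C}=(C^{(1)},\dots,C^{(r)})} \lambda(\mathbf{C}) \prod_{s=1}^{r}\bigl(C^{(s)}-\E[C^{(s)}]\bigr),
\end{equation}
where each $C^{(s)}$ is a cyclic product of the form $m_{j_1 j_2}\cdots m_{j_{L_s} j_1}$ coming from the expansion of one factor, and $\lambda(\mathbf{C})$ collects the scalar weights (binomial, $c_\alpha$, powers of $N/k$, etc.) exactly as in \S 4.1.

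Next I take expectation and organize the $r$-tuples of cyclic products into clusters according to the equivalence relation on $\{1,\dots,r\}$ generated by ``$C^{(s)}$ and $C^{(s')}$ share at least one random entry $a_{ij}$.'' The centering $C^{(s)}-\E[C^{(s)}]$ kills any contribution in which some cluster has size one: if $\{s\}$ is a singleton cluster, then $C^{(s)}$ is independent of the other factors, so its centered expectation factors out as $\E[C^{(s)}-\E C^{(s)}]=0$. Therefore only cluster partitions of $\{1,\dots,r\}$ with all parts of size at least $2$ contribute; in particular there are at most $\lfloor r/2\rfloor$ clusters, so the clustering structure contributes only an $O_r(1)$ combinatorial factor.

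For each surviving clustered congruence configuration I will then run the degree-of-freedom bookkeeping used in \S 4.1 and in Lemma \ref{lem_contributions}, applied simultaneously across all $r$ cyclic products. For a single product of length $L_s=\alpha_s+i_s$ with $\beta_s$ blocks, at most $\beta_s$ degrees of freedom are lost; the block/weight cancellations \eqref{eqn:eqn1}--\eqref{eqn:eqn2} and Lemma \ref{lem_zero} reduce us to configurations with $\beta_s\le m$ and with no stray $w_j\ne w_1$. The crucial point for $r\ge 2$ is that two cyclic products lying in the same cluster must share at least one $a$-entry, and by the same parity-and-matching analysis as in Remark \ref{Remark of Lemma 3.11} each such inter-cycle identification forces at least one additional degree of freedom to be lost beyond the intra-cycle block count. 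Summed over a cluster of size $t$ this yields a total of at most $tm-(t-1)$ degrees of freedom surviving. Matching this against the normalizing factor $(N/k)^{-\sum_s \alpha_s}$ carried in $\lambda(\mathbf{C})$ and the contribution $(N/k)^{\sum_s L_s - (\text{lost})}$ from summing over indices, each cluster contributes $O_{m}(N^{0})=O_{m}(1)$ uniformly in $N$, with the implicit constant depending on $m$ through the bounded number of allowed block shapes and on $n(N)=O(\log\log N)$ only through an $O(1)$ power of a finite constant.

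Combining these three steps shows that only finitely many (in $N$) cluster-configuration types survive, each of them contributing $O_{m,r}(1)$, so
\begin{equation}
X_{m,N,i}^{(r)} \;=\; O_{m,r}(1)
\end{equation}
as $N\to\infty$. The main obstacle will be step three: carefully verifying that each inter-cluster crossover costs at least one additional degree of freedom in the present setting (blip around $Nw_1/k$ with the weight polynomial $f_1^{2n}$ and higher moments of $a_{ij}$ appearing whenever several $a$-entries are forced equal across cycles), rather than just in the two-cycle variance case handled explicitly in \S 4.1. Once this crossover bookkeeping is in place, the rest of the argument is a direct generalization of the $r=2$ computation in Example \ref{example} and the variance bound above it.
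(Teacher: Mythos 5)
Your route (center each factor, then cluster the $r$ cyclic products by shared $a$-entries and drop singleton clusters) is a genuinely different decomposition from the paper's, which simply binomially expands $\E[(X_{m,N,1}-\E X_{m,N,1})^r]$ into a finite sum $\sum_{l\le r}\binom{r}{l}(-1)^l\E[X_{m,N,1}^l](\E X_{m,N,1})^{r-l}$ and bounds each $\E[X_{m,N,1}^l]$ directly. Your cluster idea is attractive because the centering automatically kills independent factors, but the route as written contains a genuine gap in the degree-of-freedom step.

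The claim that ``each inter-cycle identification forces at least one additional degree of freedom to be lost beyond the intra-cycle block count'' is false in this ensemble, and Example~\ref{example} in the paper explicitly exhibits the failure. There, for $m=2$, the surviving crossover configurations have two $1$-blocks in each of $C_{i+\alpha}$ and $C_{j+\beta}$, all four matched in two pairs \emph{across} the two cycles. Each matched pair costs exactly two degrees of freedom, so the total loss is $4$, which equals $\beta_1+\beta_2$ with no surplus. That is precisely why the crossover contribution is $\Theta(1)$, giving $\lim_{N\to\infty}\var[\mu_{A_N,1}^{(2)}]=\frac{2(k_1-1)}{k_1}>0$ for $k_1>1$; if your extra-loss claim held, this variance would vanish. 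So your third step, as stated, would actually prove too much (that all crossover terms are $O(1/N)$), which is contradicted by the paper.

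The good news is that you do not need the extra loss. The bookkeeping that works, and that the paper uses in Appendix~F, is: for each cycle $v$, Lemma~\ref{lem_contributions} gives loss $\ge\beta_v$ regardless of whether the matchings are intra- or inter-cycle; the weight-polynomial cancellations \eqref{eqn:eqn1}--\eqref{eqn:eqn2} force $\beta_v=m$ for every surviving cycle; and the $N$-powers in $\lambda(\mathbf C)$ then exactly balance the $(N/k)^{\sum_v(\alpha_v+i_v)-\text{lost}}$ from the index sums to give $N^{\,lm-\sum_v t_v}\le N^0$. That already yields $O_{m,l}(1)$ per term, and then either your cluster bookkeeping or the paper's binomial expansion finishes. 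In short: drop the extra-degree-of-freedom claim, replace it with the equality-case bookkeeping from Lemma~\ref{lem_contributions} applied cycle by cycle, and your cluster approach becomes a correct (if somewhat heavier) alternative to the paper's argument. The paper's binomial-then-bound approach is the lighter of the two because it never needs to distinguish independent from crossover configurations -- it simply bounds everything by the same constant.
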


The proof of this lemma uses similar technique as Lemma 5.6 of \cite{BCDHMSTPY} and reduces to a special case of Section \ref{sec:blip}. The detail of the proof is also given in Appendix \ref{appendix F}.


\appendix

\section{Proof of Lemma \ref{lem:poly}} \label{sec:appendixproof1}
\begin{lemma} \label{lem_a3}
    Fix $s\in\mathbb{N}$ with $s\geq 2$. For $\eta\in\mathbb{N}$ with $\eta\geq s$ and distinct $w_1,\dots, w_s$, we have \begin{align} \label{732}
	\sum_{\substack{x_1+\dots + x_s = \eta\\ x_1,\dots, x_s\geq 1}} w_1^{x_1} \dots w_{s}^{x_s} \; = \; \frac{\sum_{l = 1}^{s} w_{l}^{\eta+2-s} f_{l}(w_1,\dots, w_s)}{\prod_{1\leq i < j \leq s}(w_i - w_j)},
	\end{align} where each $f_{l}(w_1,\dots, w_s) \in \mathbb{R}[w_1,\dots, w_s]$ is a homogeneous polynomial of degree $\binom{s}{2} + s-2$.
\end{lemma}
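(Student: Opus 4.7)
The plan is to recognize the left-hand sum as a classical symmetric-function expression and then repackage it using a partial-fraction identity. First, I would substitute $x_i = y_i + 1$ so that $y_i \geq 0$ and $y_1 + \cdots + y_s = \eta - s$. Factoring out $w_1 \cdots w_s$ rewrites the sum as $w_1 \cdots w_s \cdot h_{\eta - s}(w_1, \dots, w_s)$, where $h_n$ denotes the complete homogeneous symmetric polynomial of degree $n$ in $s$ variables. This converts a constrained composition sum into a well-studied object.

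Second, I would invoke the classical identity
\begin{equation*}
h_n(w_1, \dots, w_s) \; = \; \sum_{i=1}^{s} \frac{w_i^{n+s-1}}{\prod_{j\neq i}(w_i - w_j)},
\end{equation*}
obtained by decomposing $\prod_i (1 - w_i z)^{-1}$ into partial fractions in $z$ and reading off the $z^n$-coefficient. Substituting $n = \eta - s$ and multiplying back by $w_1 \cdots w_s$ gives
\begin{equation*}
\sum_{\substack{x_1+\dots+x_s=\eta\\ x_i \geq 1}} w_1^{x_1}\cdots w_s^{x_s} \; = \; \sum_{i=1}^s \frac{w_i^{\eta}\prod_{j\neq i} w_j}{\prod_{j\neq i}(w_i - w_j)}.
\end{equation*}

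Third, to bring everything over the common Vandermonde denominator $V := \prod_{i<j}(w_i - w_j)$, I would use the elementary identity $V = (-1)^{i-1}\prod_{j\neq i}(w_i - w_j)\cdot V_i$, where $V_i := \prod_{j<l,\,j,l\neq i}(w_j - w_l)$ is the reduced Vandermonde obtained by deleting the $i$-th variable. Pulling out the factor $w_l^{\eta + 2 - s}$ from each resulting numerator then yields
\begin{equation*}
f_l(w_1,\dots,w_s) \; = \; (-1)^{l-1}\, w_l^{s-2}\, V_l \prod_{j\neq l} w_j,
\end{equation*}
which is manifestly homogeneous of degree $(s-2) + (s-1) + \binom{s-1}{2} = \binom{s}{2} + s - 2$, as claimed.

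The only nontrivial step is the partial-fraction identity for $h_n$; the remaining work is careful bookkeeping of signs and degrees. As a sanity check I would verify the $s = 2$ case, where everything collapses to the geometric sum $w_1 w_2 (w_1^{\eta - 1} - w_2^{\eta - 1})/(w_1 - w_2)$, giving $f_1 = w_2$ and $f_2 = -w_1$ as homogeneous polynomials of degree $1 = \binom{2}{2} + 0$.
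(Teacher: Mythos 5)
Your proof is correct, and it takes a genuinely different route from the paper's. The paper proves Lemma~\ref{lem_a3} by induction on $s$: the base case $s=2$ is a geometric progression, and the inductive step sums out the variable $x_{s+1}$, applies the inductive hypothesis, and uses the geometric sum once more, with some algebra to collect everything over the Vandermonde denominator $\prod_{1\le i<j\le s+1}(w_i-w_j)$. You instead shift variables to identify the left-hand side with $w_1\cdots w_s\, h_{\eta-s}(w_1,\dots,w_s)$ and invoke the classical partial-fraction expansion $h_n = \sum_i w_i^{n+s-1}/\prod_{j\ne i}(w_i-w_j)$, which collapses the whole statement to sign-and-degree bookkeeping on the Vandermonde. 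The trade-off is that your argument is noninductive and yields explicit formulas
\begin{equation*}
f_l(w_1,\dots,w_s)\ =\ (-1)^{l-1}\,w_l^{\,s-2}\,\Bigl(\prod_{j\neq l}w_j\Bigr)\prod_{\substack{j<t\\ j,t\neq l}}(w_j-w_t),
\end{equation*}
which is strictly more information than the paper's existence-and-degree statement; the cost is the reliance on the $h_n$ identity, which the paper avoids by keeping everything elementary. Your $s=2$ sanity check agrees with the paper's base case, and your degree computation $(s-2)+(s-1)+\binom{s-1}{2}=\binom{s}{2}+s-2$ is correct.
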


\begin{proof}
	Induct on $s$. For $s = 2$, by geometric progression, we have \begin{align}
	\sum_{\substack{x_1+x_2 = \eta\\ x_1, x_2 \geq 1}}w_1^{x_1}w_2^{x_2}  \; =\; \frac{w_1^{\eta}w_2 - w_2^{\eta}w_1}{w_1-w_2}
	\end{align} for all $\eta\in\mathbb{N}$ with $\eta\geq 2$. Suppose $s\in\mathbb{N}$ with $s\geq 2$ and equation \eqref{732} holds for all $\eta\in\mathbb{N}$ with $\eta\geq s$.
	Then for $\eta \geq s+1$, \begin{align}
		\sum_{\substack{x_1+\dots + x_{s+1} = \eta\\ x_1,\dots, x_{s+1}\geq 1}} w_1^{x_1} \dots w_{s+1}^{x_{s+1}} \; = \;& \sum_{x_{s+1} = 1}^{\eta-s} \sum_{\substack{x_1+\dots + x_s = \eta-x_{s+1}\\ x_1,\dots, x_s\geq 1}} w_1^{x_1} \dots w_{s}^{x_s}w_{s+1}^{x_{s+1}}\nonumber\\
		=\; &\sum_{x_{s+1} = 1}^{\eta-s} \frac{\sum_{l = 1}^{s} w_{l}^{\eta-x_{s+1}+2-s} f_{l}(w_1,\dots, w_s)}{\prod_{1\leq i < j \leq s}(w_i - w_j)}w_{s+1}^{x_{s+1}} \nonumber\\
		=\;& \sum_{l =1}^{s}\frac{f_{l}(w_1,\dots, w_s)}{\prod_{1\leq i < j \leq s}(w_i - w_j)}\sum_{x_{s+1} = 1}^{\eta-s}w_{l}^{\eta-x_{s+1}+2-s}w_{s+1}^{x_{s+1}} \nonumber\\
		=\; &\sum_{l =1}^{s}\frac{f_{l}(w_1,\dots, w_s)}{\prod_{1\leq i < j \leq s}(w_i - w_j)}\frac{w_{l}^{\eta+1-s}w_{s+1} - w_{l}w_{s+1}^{\eta+1-s} }{1-\frac{w_{s+1}}{w_l}} \nonumber\\
		=\; & \sum_{l =1}^{s}\frac{w_{l}f_{l}(w_1,\dots, w_s)}{\prod_{1\leq i < j \leq s}(w_i - w_j)}\frac{w_{l}^{\eta+1-s}w_{s+1} - w_{l}w_{s+1}^{\eta+1-s} }{w_{l}-w_{s+1}} \nonumber\\
		=\; & \frac{\sum_{l=1}^{s}w_{l}^{\eta+1-s}(w_{l}w_{s+1}\prod_{\substack{1\leq i  \leq s\\ i\neq l}}(w_i - w_{s+1}) f_{l}(w_1,\dots, w_s) )  }{\prod_{1\leq i < j \leq s+1}(w_i - w_j)} \nonumber\\
		&-\frac{w_{s+1}^{\eta+1-s}(\sum_{l=1}^{s}w_{l}^{2}\prod_{\substack{1\leq i  \leq s\\ i\neq l}}(w_i - w_{s+1}) f_{l}(w_1,\dots, w_s) )  }{\prod_{1\leq i < j \leq s+1}(w_i - w_j)},
	\end{align} where each $w_{l}w_{j}\prod_{\substack{1\leq i  \leq s\\ i\neq l}}(w_i - w_{s+1}) f_{l}(w_1,\dots, w_s)$ is a homogeneous polynomial in $w_1,\dots, w_{s+1}$ of degree $2+(s-1)+(\binom{s}{2}+s-2) \; = \; \binom{s+1}{2}+s+1-2$.
\end{proof}

\begin{lemma}\label{a4}
	Fix $s\in\mathbb{N}$ with $s\geq 2$, $q\in\mathbb{N}\cup\{0\}$, and $\alpha_1,\dots, \alpha_q\in\mathbb{N}_{\leq s}$ (may not be distinct). For $\eta\in\mathbb{N}$ with $\eta\geq s$ and distinct $w_1,\dots, w_s$, we have \begin{align} \label{735old}
	\sum_{\substack{x_1+\dots + x_s = \eta\\ x_1,\dots, x_s\geq 1}}x_{\alpha_1}\dots x_{\alpha_q} w_1^{x_1} \dots w_{s}^{x_s} \; = \; \frac{\sum_{l = 1}^{s} w_{l}^{\eta+2-s} f_{l,\eta}(w_1,\dots, w_s)}{(\prod_{1\leq i < j \leq s}(w_i - w_j))^{2^{q}}},
	\end{align} where each $f_{l,\eta}(w_1,\dots, w_s)\in\mathbb{R}[\eta][w_1,\dots, w_s]$ is a homogeneous polynomial in $w_1,\dots, w_s$ of degree $2^{q}\binom{s}{2} + s-2$. Furthermore, the coefficients in the polynomial $f_{l,\eta}(w_1,\dots, w_s)$ are polynomials in $\eta$ of degree $\leq q$.
\end{lemma}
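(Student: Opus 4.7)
My plan is to induct on $q$, using a differential operator trick. The base case $q=0$ is precisely Lemma \ref{lem_a3}, with $f_{l,\eta}=f_l$ independent of $\eta$ (so $\eta$-degree $\leq 0$) and denominator $Q^{2^0}=Q$ where $Q:=\prod_{1\leq i<j\leq s}(w_i-w_j)$. For the inductive step, I will introduce the Euler-type operator $D_j:=w_j\partial_{w_j}$, whose defining property $D_j(w_j^{x_j})=x_j w_j^{x_j}$ allows me to obtain the $(q{+}1)$-version of \eqref{735old} by applying $D_{\alpha_{q+1}}$ term-by-term to both sides of the $q$-version. On the LHS this brings down exactly the factor $x_{\alpha_{q+1}}$ that is needed.

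On the RHS, set $N_q:=\sum_{l=1}^s w_l^{\eta+2-s}f_{l,\eta}(w)$. Using the quotient rule together with $D_{\alpha_{q+1}}(Q^{2^q})=2^q Q^{2^q-1}D_{\alpha_{q+1}}(Q)$, I compute
\begin{equation*}
D_{\alpha_{q+1}}\!\left(\frac{N_q}{Q^{2^q}}\right)
\; = \;
\frac{Q^{2^q-1}\bigl(Q\,D_{\alpha_{q+1}}N_q \,-\, 2^q N_q\,D_{\alpha_{q+1}}Q\bigr)}{Q^{2^{q+1}}},
\end{equation*}
whose denominator is already in the required form $Q^{2^{q+1}}$. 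I then take the numerator to be $N_{q+1}$, read off the functions $f_{l,\eta}^{(q+1)}$ from its expansion, and verify the three structural claims.

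The verification reduces to three elementary checks. (i) \emph{Form preserved}: since $D_j(w_l^{\eta+2-s})=\delta_{jl}(\eta+2-s)w_l^{\eta+2-s}$, the operator $D_{\alpha_{q+1}}$ acts diagonally on the basis $\{w_l^{\eta+2-s}\}_l$, so $D_{\alpha_{q+1}}N_q$ still has the shape $\sum_l w_l^{\eta+2-s}(\cdots)$; multiplying by the polynomials $Q$, $Q^{2^q-1}$, and $D_{\alpha_{q+1}}Q$ preserves this shape. (ii) \emph{$w$-degree}: $N_q$ is $w$-homogeneous of degree $\eta+2^q\binom{s}{2}$ by the inductive hypothesis, so both $Q\,D_{\alpha_{q+1}}N_q$ and $N_q\,D_{\alpha_{q+1}}Q$ have $w$-degree $\eta+(2^q+1)\binom{s}{2}$, and multiplication by $Q^{2^q-1}$ brings the $w$-degree of $N_{q+1}$ to $\eta+2^{q+1}\binom{s}{2}$; subtracting the $w_l^{\eta+2-s}$ factor gives each $f_{l,\eta}^{(q+1)}$ the required $w$-degree $2^{q+1}\binom{s}{2}+s-2$. (iii) \emph{$\eta$-degree}: the only source of $\eta$ is the factor $\eta+2-s$ produced when $D_{\alpha_{q+1}}$ lands on some $w_l^{\eta+2-s}$; this raises the $\eta$-degree of the coefficients by exactly one, so from $\leq q$ to $\leq q+1$.

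The only real subtlety, and the main thing to keep straight, is the bookkeeping of the denominator: one must carry the non-reduced form $Q^{2^q}$ rather than simplifying, so that a single application of the quotient rule produces $Q^{2^{q+1}}$ directly and the induction closes cleanly. Once this is arranged, the rest of the argument is a routine degree count; no new combinatorics beyond Lemma \ref{lem_a3} and the quotient rule is needed.
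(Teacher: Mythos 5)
Your proof is correct and takes essentially the same route as the paper: both induct on $q$ and advance the step by applying the Euler operator $w_{\alpha}\,\partial_{w_{\alpha}}$ to the previous case, invoking the quotient rule and deliberately keeping the denominator in the unreduced form $Q^{2^{q}}$ so that a single application promotes it to $Q^{2^{q+1}}$. Your $D_j$, $Q$, $N_q$ notation simply packages the same degree bookkeeping the paper spells out in three itemized cases.
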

\begin{proof}
	Induct on $q$. The case $q = 0$ was proved in Lemma \ref{lem_a3}.
	Suppose $q\in\mathbb{N}$ and we have \begin{align}
	\sum_{\substack{x_1+\dots + x_s = \eta\\ x_1,\dots, x_s\geq 1}}x_{\alpha_1}\dots x_{\alpha_{q-1}} w_1^{x_1} \dots w_{s}^{x_s} \; = \; \frac{\sum_{l = 1}^{s} w_{l}^{\eta+2-s} f_{l,\eta, \alpha_1,\dots, \alpha_{q-1}}(w_1,\dots, w_s)}{(\prod_{1\leq i < j \leq s}(w_i - w_j))^{2^{q-1}}}
	\end{align} for all $\eta\in\mathbb{N}$ with $\eta \geq s$, where each $f_{l,\eta, \alpha_1,\dots, \alpha_{q-1}}(w_1,\dots, w_s)$ is a homogeneous polynomial in $w_1,\dots, w_s$ of degree $2^{q-1}\binom{s}{2} + s-2$ and the coefficients are polynomials in $\eta$ of degree $\leq q-1$. Then \begin{align}
	&\sum_{\substack{x_1+\dots + x_s = \eta\\ x_1,\dots, x_s\geq 1}}x_{\alpha_1}\dots x_{\alpha_q} w_1^{x_1} \dots w_{s}^{x_s} \nonumber\\
	=\; &w_{\alpha_q}\frac{\partial}{\partial w_{\alpha_q}}\sum_{\substack{x_1+\dots + x_s = \eta\\ x_1,\dots, x_s\geq 1}}x_{\alpha_1}\dots x_{\alpha_{q-1}} w_1^{x_1} \dots w_{s}^{x_s} \nonumber\\
	=\; & w_{\alpha_q}\frac{\partial}{\partial w_{\alpha_q}}\frac{\sum_{l = 1}^{s} w_{l}^{\eta+2-s} f_{l,\eta, \alpha_1,\dots, \alpha_{q-1}}(w_1,\dots, w_s)}{(\prod_{1\leq i < j \leq s}(w_i - w_j))^{2^{q-1}}} \nonumber\\
	=\; & \frac{\sum_{l = 1}^{s} w_{l}^{\eta+2-s} w_{\alpha_q}  \frac{\partial f_{l,\eta,\alpha_1,\dots, \alpha_{q-1}}(w_1,\dots, w_s)}{\partial w_{\alpha_q}}(\prod_{1\leq i < j \leq s}(w_i - w_j))^{2^{q-1}}}{(\prod_{1\leq i < j \leq s}(w_i - w_j))^{2^{q}}} \nonumber\\&
	+ \frac{w_{\alpha_q}^{\eta+2-s}(\eta+2-s)f_{l,\eta,\alpha_1,\dots, \alpha_{q-1}}(w_1,\dots, w_s)(\prod_{1\leq i < j \leq s}(w_i - w_j))^{2^{q-1}}}{(\prod_{1\leq i < j \leq s}(w_i - w_j))^{2^{q}}} \nonumber\\&
	- \frac{\sum_{l = 1}^{s} w_{l}^{\eta+2-s} f_{l,\eta,\alpha_1,\dots, \alpha_{q-1}}(w_1,\dots, w_s)w_{\alpha_q}  \frac{\partial}{\partial w_{\alpha_q}}(\prod_{1\leq i < j \leq s}(w_i - w_j))^{2^{q-1}}}{(\prod_{1\leq i < j \leq s}(w_i - w_j))^{2^{q}}}.
	\end{align}
	Note that by induction hypothesis, we have \begin{enumerate} \item $w_{\alpha_q}  \frac{\partial f_{l,\eta,\alpha_1,\dots, \alpha_{q-1}}(w_1,\dots, w_s)}{\partial w_{\alpha_q}}(\prod_{1\leq i < j \leq s}(w_i - w_j))^{2^{q-1}}$ is a homogeneous polynomial in $w_1,\dots, w_s$ of degree $1+(2^{q-1}\binom{s}{2} + s-2-1)+2^{q-1}\binom{s}{2} =2^{q}\binom{s}{2} + s-2 $ and the coefficients are polynomials in $\eta$ of degree $\leq q-1$;
	\item  $(\eta+2-s)f_{l,\eta,\alpha_1,\dots, \alpha_{q-1}}(w_1,\dots, w_s)(\prod_{1\leq i < j \leq s}(w_i - w_j))^{2^{q-1}}$ is a homogeneous polynomial in $w_1,\dots, w_s$ of degree $(2^{q-1}\binom{s}{2} + s-2)+2^{q-1}\binom{s}{2} =2^{q}\binom{s}{2} + s-2 $ and the coefficients are polynomials in $\eta$ of degree $\leq q$;
	\item $f_{l,\eta,\alpha_1,\dots, \alpha_{q-1}}(w_1,\dots, w_s)w_{\alpha_q}  \frac{\partial}{\partial w_{\alpha_q}}(\prod_{1\leq i < j \leq s}(w_i - w_j))^{2^{q-1}}$ is a homogeneous polynomial in $w_1,\dots, w_s$ of degree $(2^{q-1}\binom{s}{2} + s-2)+1+(2^{q-1}\binom{s}{2}-1) =2^{q}\binom{s}{2} + s-2 $ and the coefficients are polynomials in $\eta$ of degree $\leq q-1$.
\end{enumerate}
	Therefore, after collecting the terms, we get \eqref{735old}.
\end{proof}

\begin{lemma} \label{a5}
    Fix $s\in\mathbb{N}$ with $s\geq 2$ and some polynomial $p(x_1,\dots, x_s)\in\mathbb{R}[x_1,\dots, x_s]$ of degree $q$. For $\eta\in\mathbb{N}$ with $\eta\geq s$ and distinct $w_1,\dots, w_s$, we have \begin{align}
        \sum_{\substack{x_1 + \dots + x_s = \eta \\ x_1,\dots, x_s \geq 1}} p(x_1,\dots, x_s) w_1^{x_1}\cdots w_s^{x_s} \; = \; \frac{\sum_{l =1}^{s}w_{l}^{\eta + 2 -s}f_{l,\eta}(w_1,\dots, w_s)}{(\prod_{1\leq i < j \leq s}(w_i - w_j))^{2^{q}}},
    \end{align} where each $f_{l,\eta}(w_1,\dots, w_s)\in\mathbb{R}[\eta][w_1,\dots, w_s]$ is a homogeneous polynomial in $w_1,\dots, w_s$ of degree $2^q\binom{s}{2} + s - 2$. Furthermore, the coefficients in the polynomial $f_{l,\eta}(w_1,\dots, w_s)$ are polynomials in $\eta$ of degree $\leq q$.
\end{lemma}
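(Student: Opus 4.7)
The plan is to reduce Lemma~\ref{a5} to Lemma~\ref{a4} by linearity in the polynomial $p$. Decompose $p = \sum_{d=0}^{q} p_d$ into its homogeneous components, where $p_d$ is homogeneous of degree exactly $d$ (with $p_0$ the constant term). Then
\[
\sum_{\substack{x_1+\dots+x_s=\eta\\ x_1,\dots,x_s\geq 1}} p(x_1,\dots,x_s)\, w_1^{x_1}\cdots w_s^{x_s} \;=\; \sum_{d=0}^{q} S_d,
\]
where
\[
S_d \;:=\; \sum_{\substack{x_1+\dots+x_s=\eta\\ x_1,\dots,x_s\geq 1}} p_d(x_1,\dots,x_s)\, w_1^{x_1}\cdots w_s^{x_s}.
\]
Each $p_d$ is a finite $\mathbb{R}$-linear combination of monomials of the shape $x_{\alpha_1}\cdots x_{\alpha_d}$, so Lemma~\ref{a4} applied to each monomial (and Lemma~\ref{lem_a3} for the base case $d=0$) combined with linearity yields
\[
S_d \;=\; \frac{\sum_{l=1}^{s} w_l^{\eta+2-s}\, g_{l,\eta,d}(w_1,\dots,w_s)}{\bigl(\prod_{1\leq i<j\leq s}(w_i-w_j)\bigr)^{2^{d}}},
\]
where $g_{l,\eta,d}\in\mathbb{R}[\eta][w_1,\dots,w_s]$ is homogeneous in $w_1,\dots,w_s$ of degree $2^{d}\binom{s}{2}+s-2$, with coefficients that are polynomials in $\eta$ of degree at most $d$.

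Next I would bring the $S_d$ over the common denominator $\bigl(\prod_{1\leq i<j\leq s}(w_i-w_j)\bigr)^{2^{q}}$, which is legitimate because $2^{q}\geq 2^{d}$ for every $d\leq q$, and set
\[
f_{l,\eta}(w_1,\dots,w_s) \;:=\; \sum_{d=0}^{q} g_{l,\eta,d}(w_1,\dots,w_s)\,\Bigl(\prod_{1\leq i<j\leq s}(w_i-w_j)\Bigr)^{2^{q}-2^{d}}.
\]
Since $\prod_{1\leq i<j\leq s}(w_i-w_j)$ is homogeneous of degree $\binom{s}{2}$, the $d$\textsuperscript{th} summand of $f_{l,\eta}$ is homogeneous of degree
\[
2^{d}\binom{s}{2} + s - 2 + (2^{q}-2^{d})\binom{s}{2} \;=\; 2^{q}\binom{s}{2} + s - 2,
\]
so $f_{l,\eta}$ is itself homogeneous of the claimed degree $2^{q}\binom{s}{2}+s-2$. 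Multiplication by the polynomial $(\prod_{i<j}(w_i-w_j))^{2^q-2^d}$ (which has no dependence on $\eta$) does not raise the $\eta$-degree of the coefficients, and $\max_{d\leq q} d = q$, so the coefficients of $f_{l,\eta}$ remain polynomials in $\eta$ of degree at most $q$, as required.

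The main ``work'' lies in Lemma~\ref{a4}, which is already in place; this statement is then a formal corollary via linearity and a common-denominator rearrangement. The only point requiring care is the bookkeeping of homogeneous degrees after compensating by $(\prod_{i<j}(w_i-w_j))^{2^{q}-2^{d}}$. The doubling exponent $2^{q}$ in the denominator (rather than, say, $q$ or $2q$) is precisely what makes this compensation a nonnegative integer power of a polynomial, so the step is well-defined; this feature traces back to the fact that Lemma~\ref{a4} is proved by iterated differentiation $w_{\alpha}\partial/\partial w_{\alpha}$, and each differentiation squares the denominator.
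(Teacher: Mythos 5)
Your proposal is correct and follows essentially the same route as the paper: apply Lemma~\ref{a4} monomial-by-monomial (your grouping into homogeneous components $p_d$ is an inessential intermediate step), clear to the common denominator $(\prod_{i<j}(w_i-w_j))^{2^q}$ by multiplying each numerator by $(\prod_{i<j}(w_i-w_j))^{2^q-2^d}$, and track the homogeneous degree and $\eta$-degree of the resulting numerator. The degree bookkeeping you carry out is exactly the calculation the paper records.
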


\begin{proof}
By Lemma \ref{a4}, fix any $d\in\mathbb{N}\cup \{0\}$ with $d\leq q$, and $\alpha_1, \dots, \alpha_d\in\mathbb{N}_{\leq s}$, we have \begin{align}
	\sum_{\substack{x_1+\dots + x_s = \eta\\ x_1,\dots, x_s\geq 1}}x_{\alpha_1}\dots x_{\alpha_d} w_1^{x_1} \dots w_{s}^{x_s} \; = \; &\frac{\sum_{l = 1}^{s} w_{l}^{\eta+2-s} \tilde{f}_{l,\eta}(w_1,\dots, w_s)}{(\prod_{1\leq i < j \leq s}(w_i - w_j))^{2^{d}}}\nonumber\\
	=\; &\frac{\sum_{l = 1}^{s} w_{l}^{\eta+2-s} \tilde{f}_{l,\eta}(w_1,\dots, w_s)(\prod_{1\leq i < j \leq s}(w_i - w_j))^{2^{q} - 2^{d}}}{(\prod_{1\leq i < j \leq s}(w_i - w_j))^{2^{q}}},
	\end{align} for some degree $2^{d}\binom{s}{2} + s-2 $ homogeneous polynomials $\tilde{f}_{l,\eta}(w_1,\dots, w_s)\in\mathbb{R}[\eta][w_1,\dots, w_s]$ whose coefficients are polynomials in $\eta$ of degree $\leq d$. Then $\tilde{f}_{l,\eta}(w_1,\dots, w_s)(\prod_{1\leq i < j \leq s}(w_i - w_j))^{2^{q} - 2^{d}}$ are homogeneous polynomials in $w_1,\dots, w_s$ of degree $2^{d}\binom{s}{2} + s-2+(2^{q} - 2^{d})\binom{s}{2} = 2^{q}\binom{s}{2} + s-2 $, and the coefficients are polynomials in $\eta$ of degree $\leq d\leq q$, and the result follows.
\end{proof}

\begin{lemma} \label{a6}
    Fix $s\in\mathbb{N}$ with $s\geq 2$, $q\in\mathbb{N}\cup\{0\}$, $y_1,\dots, y_s\in\mathbb{N}$, and $\alpha_1,\dots, \alpha_q\in\mathbb{N}_{\leq s}$ (may not be distinct). For $\eta\in\mathbb{N}$ with $\eta\geq \sum_{i=1}^{s} y_i$ and distinct $w_1,\dots, w_s$, we have \begin{align} \label{735}
	\sum_{\substack{x_1+\dots + x_s = \eta\\ x_i\geq y_i}}x_{\alpha_1}\dots x_{\alpha_q} w_1^{x_1} \dots w_{s}^{x_s} \; = \; \frac{\sum_{l = 1}^{s} w_{l}^{\eta + 2 -\sum_{i=1}^{s}y_i} f_{l,\eta}(w_1,\dots, w_s)}{(\prod_{1\leq i < j \leq s}(w_i - w_j))^{2^{q}}},
	\end{align} where each $f_{l,\eta}(w_1,\dots, w_s)\in\mathbb{R}[\eta][w_1,\dots, w_s]$ is a homogeneous polynomial in $w_1,\dots, w_s$ of degree $ 2^{q}\binom{s}{2} + (\sum_{i=1}^{s}y_i)-2$. Furthermore, the coefficients in the polynomial $f_{l,\eta}(w_1,\dots, w_s)$ are polynomials in $\eta$ of degree $\leq q$.
\end{lemma}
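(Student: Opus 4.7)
The plan is to reduce Lemma~A.6 to Lemma~A.5 by a straightforward change of variables that turns the constraint $x_i \geq y_i$ into $x_i' \geq 1$, and then absorb the shift into the polynomial prefactor and into extra powers of the $w_i$'s.

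First, I would substitute $x_i = x_i' + (y_i - 1)$ so that $x_i' \geq 1$ and $\sum_i x_i' = \eta - \sum_i(y_i - 1) = \eta'$ where $\eta' := \eta - \sum_i y_i + s$. Because $\eta \geq \sum_i y_i$, we have $\eta' \geq s$, which puts us squarely in the hypothesis of Lemma~A.5. Under this substitution the summand $x_{\alpha_1}\cdots x_{\alpha_q} \prod_i w_i^{x_i}$ becomes
\begin{equation*}
\left(\prod_i w_i^{y_i-1}\right)\cdot \bigl(x_{\alpha_1}' + y_{\alpha_1} - 1\bigr)\cdots\bigl(x_{\alpha_q}' + y_{\alpha_q} - 1\bigr)\cdot \prod_i w_i^{x_i'},
\end{equation*}
and the factor $p(x_1',\dots,x_s') := \prod_{j=1}^q(x_{\alpha_j}' + y_{\alpha_j} - 1)$ is a polynomial in $x_1',\dots,x_s'$ of total degree exactly $q$.

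Next, I would apply Lemma~A.5 to the inner sum with the parameter $\eta'$ in place of $\eta$ and the polynomial $p$ of degree $q$. This gives
\begin{equation*}
\sum_{\substack{x_1'+\cdots+x_s' = \eta' \\ x_i' \geq 1}} p(x_1',\dots,x_s')\, w_1^{x_1'}\cdots w_s^{x_s'} \; = \; \frac{\sum_{l=1}^{s} w_l^{\eta' + 2 - s}\, \tilde{f}_{l,\eta'}(w_1,\dots,w_s)}{\bigl(\prod_{1\leq i<j\leq s}(w_i-w_j)\bigr)^{2^q}},
\end{equation*}
with $\tilde{f}_{l,\eta'}$ homogeneous of degree $2^q\binom{s}{2}+s-2$, whose coefficients are polynomials in $\eta'$ of degree at most $q$. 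Multiplying back by $\prod_i w_i^{y_i-1}$ and noting that $\eta' + 2 - s = \eta + 2 - \sum_i y_i$, I define
\begin{equation*}
f_{l,\eta}(w_1,\dots,w_s) \; := \; \left(\prod_i w_i^{y_i-1}\right)\tilde{f}_{l,\eta'}(w_1,\dots,w_s),
\end{equation*}
which yields the desired identity.

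It remains to verify the two bookkeeping claims. For homogeneity, $\prod_i w_i^{y_i-1}$ contributes degree $\sum_i y_i - s$, so $f_{l,\eta}$ is homogeneous of degree $(\sum_i y_i - s) + (2^q\binom{s}{2} + s - 2) = 2^q\binom{s}{2} + \sum_i y_i - 2$. For the coefficients, $\eta' = \eta - \sum_i y_i + s$ is an affine function of $\eta$, so a polynomial of degree $\leq q$ in $\eta'$ is also a polynomial of degree $\leq q$ in $\eta$. This step is bookkeeping rather than the main obstacle; essentially all the combinatorial and analytic content has already been absorbed into the earlier lemmas. The only thing one must be careful about is confirming that no pole in $w_l$ is introduced by the $\prod_i w_i^{y_i-1}$ factor (it is a monomial in the $w_i$'s, so the structure of the representation as a sum indexed by $l$ is preserved).
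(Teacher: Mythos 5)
Your proof is correct and is essentially identical to the paper's: the paper also substitutes $x_i \mapsto x_i + (y_i - 1)$ to reduce to Lemma A.5 with the shifted degree-$q$ polynomial $(x_{\alpha_1}+y_{\alpha_1}-1)\cdots(x_{\alpha_q}+y_{\alpha_q}-1)$, pulls out the monomial $\prod_i w_i^{y_i-1}$, and performs the same degree and exponent bookkeeping. Nothing further is needed.
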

\begin{proof}
By lemma \ref{a5}, we have \begin{align}
    &\sum_{\substack{ x_1+\dots+x_s=\eta \\ x_i \geq y_i}}
x_{\alpha_1}\dots x_{\alpha_q}w_{1}^{x_1}\dots w_{s}^{x_s} \; \nonumber\\= \;& \prod_{i = 1}^s w_{i}^{y_i-1}\sum_{\substack{ x_1+\dots+x_s=\eta+s-\sum_{i = 1}^{s}y_i \\ x_1,\dots, x_s \geq 1}}
(x_{\alpha_1}+y_{\alpha_1}-1)\cdots (x_{\alpha_q}+y_{\alpha_q}-1)w_{1}^{x_1}\dots w_{s}^{x_s} \nonumber\\
=\; & \prod_{i = 1}^s w_{i}^{y_i-1}\frac{\sum_{l =1}^{s}w_{l}^{\eta + 2 -\sum_{i=1}^{s}y_i }\tilde{f}_{l,\eta}(w_1,\dots, w_s)}{(\prod_{1\leq i < j \leq s}(w_i - w_j))^{2^{q}}},
\end{align} where each $\tilde{f}_{l,\eta}(w_1,\dots, w_s)\in\mathbb{R}[\eta][w_1,\dots, w_s]$ is a homogeneous polynomial in $w_1,\dots, w_s$ of degree $2^{q}\binom{s}{2} + s-2$, and the coefficients of $f_{l,\eta}(w_1,\dots, w_s)$ are polynomials in $\eta$ of degree $\leq q$. Then each $f_{l,\eta}(w_1,\dots, w_s) = \prod_{i = 1}^s w_{i}^{y_i-1}\tilde{f}_{l,\eta}(w_1,\dots, w_s)$ is a homogeneous polynomial in $w_1,\dots, w_s$ of degree $ 2^{q}\binom{s}{2} + (\sum_{i=1}^{s}y_i)-2$, and the coefficients are polynomials in $\eta$ of degree $\leq q$.
\end{proof}

\begin{lemma}
	Fix $s\in\mathbb{N}$ with $s\geq 2$ and some polynomial $p(x_1,\dots, x_s)\in\mathbb{R}[x_1,\dots, x_s]$ of degree $q$. For $\eta\in\mathbb{N}$ with $\eta\geq \sum_{i=1}^{s} y_i$ and distinct $w_1,\dots, w_s$, we have \begin{align}
	\sum_{\substack{x_1+\dots + x_s = \eta\\ x_i\geq y_i}}p(x_1,\dots, x_s) w_1^{x_1} \dots w_{s}^{x_s} \; = \; \frac{\sum_{l = 1}^{s} w_{l}^{\eta + 2 -\sum_{i=1}^{s}y_i} f_{l,\eta}(w_1,\dots, w_s)}{(\prod_{1\leq i < j \leq s}(w_i - w_j))^{2^{q}}},
	\end{align} where each $f_{l,\eta}(w_1,\dots, w_s)\in\mathbb{R}[\eta][w_1,\dots, w_s]$ is a homogeneous polynomial in $w_1,\dots, w_s$ of degree $ 2^{q}\binom{s}{2} + (\sum_{i=1}^{s}y_i)-2$. Furthermore, the coefficients in the polynomial $f_{l,\eta}(w_1,\dots, w_s)$ are polynomials in $\eta$ of degree $\leq q$.
\end{lemma}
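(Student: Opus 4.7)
The final lemma extends Lemma~\ref{a5} from the special case $y_1 = \cdots = y_s = 1$ to arbitrary positive lower bounds, so the plan is to perform a change of summation variables that reduces it back to Lemma~\ref{a5} already proven above.

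Introduce shifted summation variables $x_i' := x_i - (y_i - 1)$, under which the constraint $x_i \geq y_i$ becomes $x_i' \geq 1$, and $x_1 + \cdots + x_s = \eta$ becomes $x_1' + \cdots + x_s' = \eta'$, where $\eta' := \eta + s - \sum_{i=1}^s y_i$. The hypothesis $\eta \geq \sum_i y_i$ translates exactly into $\eta' \geq s$, which is what Lemma~\ref{a5} requires. The monomial weight splits as
\[
w_1^{x_1}\cdots w_s^{x_s} \;=\; \Big(\prod_{i=1}^s w_i^{y_i-1}\Big)\, w_1^{x_1'}\cdots w_s^{x_s'},
\]
and the polynomial $p$ becomes
\[
\tilde p(x_1',\dots,x_s') \;:=\; p(x_1'+y_1-1,\dots,x_s'+y_s-1),
\]
which is still a polynomial in the $x_i'$ of total degree at most $q$, since integer translation of the arguments preserves total degree.

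Applying Lemma~\ref{a5} to $\tilde p$ with parameter $\eta'$ yields
\[
\sum_{\substack{x_1'+\cdots+x_s'=\eta' \\ x_1',\dots,x_s' \geq 1}} \tilde p(x_1',\dots,x_s')\, w_1^{x_1'}\cdots w_s^{x_s'}
\;=\; \frac{\sum_{l=1}^s w_l^{\eta'+2-s}\, \tilde f_{l,\eta'}(w_1,\dots,w_s)}{\big(\prod_{1\leq r<t\leq s}(w_r-w_t)\big)^{2^q}},
\]
where each $\tilde f_{l,\eta'}$ is homogeneous in the $w$'s of degree $2^q\binom{s}{2}+s-2$ with coefficients polynomial in $\eta'$ of degree $\leq q$. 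Multiplying both sides by $\prod_i w_i^{y_i-1}$ and defining
\[
f_{l,\eta}(w_1,\dots,w_s) \;:=\; \Big(\prod_{i=1}^s w_i^{y_i-1}\Big)\, \tilde f_{l,\eta'}(w_1,\dots,w_s),
\]
the exponent on $w_l$ becomes $\eta'+2-s = \eta+2-\sum_i y_i$, matching the claim. A quick degree count shows $f_{l,\eta}$ is homogeneous of degree $\big(\sum_i y_i - s\big) + \big(2^q\binom{s}{2}+s-2\big) = 2^q\binom{s}{2}+\sum_i y_i - 2$. Because $\eta' = \eta + s - \sum_i y_i$ is affine in $\eta$, the coefficients of $f_{l,\eta}$ remain polynomials in $\eta$ of degree $\leq q$.

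The only subtle point is the degree bookkeeping through the substitution: verifying that the shift $x_i \mapsto x_i - y_i + 1$ does not inflate the total degree of $\tilde p$ beyond $q$, and that absorbing the monomial $\prod_i w_i^{y_i-1}$ into $\tilde f_{l,\eta'}$ correctly adds $\sum_i y_i - s$ to the homogeneous degree while leaving the $\eta$-degree of the coefficients unchanged. No new combinatorial obstacle arises; the lemma is a formal consequence of Lemma~\ref{a5} after this shift.
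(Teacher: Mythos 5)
Your proof is correct. It reaches the same conclusion by the same two ingredients the paper uses --- the substitution $x_i \mapsto x_i - (y_i-1)$ that converts the constraint $x_i \ge y_i$ into $x_i \ge 1$, and the already-established case of unit lower bounds --- but it composes them in the opposite order. The paper first proves the monomial version with general lower bounds (Lemma \ref{a6}, obtained from Lemma \ref{a5} by exactly your shift applied to a single monomial $x_{\alpha_1}\cdots x_{\alpha_q}$), and then deduces the statement for a general polynomial $p$ by decomposing $p$ into monomials of degrees $d\le q$ and clearing denominators with the factor $\bigl(\prod_{i<j}(w_i-w_j)\bigr)^{2^q-2^d}$. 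You instead shift the whole polynomial at once, observe that $\tilde p(x') = p(x'+y-1)$ still has degree $q$ and that $\eta' = \eta + s - \sum_i y_i \ge s$, and invoke Lemma \ref{a5} directly; this bypasses Lemma \ref{a6} and the common-denominator step entirely. Your degree bookkeeping (the exponent $\eta'+2-s = \eta+2-\sum_i y_i$, the homogeneous degree picking up $\sum_i y_i - s$ from the prefactor $\prod_i w_i^{y_i-1}$, and the coefficients remaining polynomials of degree $\le q$ in $\eta$ because $\eta'$ is affine in $\eta$) is all accurate, so the shortcut is a mild but genuine simplification of the paper's route.
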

\begin{proof}
	By Lemma \ref{a6}, fix any $d\in\mathbb{N}\cup \{0\}$ with $d\leq q$, and $\alpha_1, \dots, \alpha_d\in\mathbb{N}_{\leq s}$, we have \begin{align}
	\sum_{\substack{x_1+\dots + x_s = \eta\\ x_i\geq y_i}}x_{\alpha_1}\dots x_{\alpha_d} w_1^{x_1} \dots w_{s}^{x_s} \; =\ \; &\frac{\sum_{l = 1}^{s} w_{l}^{\eta+2-s} \tilde{f}_{l,\eta}(w_1,\dots, w_s)}{(\prod_{1\leq i < j \leq s}(w_i - w_j))^{2^{d}}}\nonumber\\
	=\ \; &\frac{\sum_{l = 1}^{s} w_{l}^{\eta+2-s} \tilde{f}_{l,\eta}(w_1,\dots, w_s)(\prod_{1\leq i < j \leq s}(w_i - w_j))^{2^{q} - 2^{d}}}{(\prod_{1\leq i < j \leq s}(w_i - w_j))^{2^{q}}},
	\end{align} for some degree $2^{d}\binom{s}{2} + (\sum_{i=1}^{s}y_i)-2 $ homogeneous polynomials $\tilde{f}_{l,\eta}(w_1,\dots, w_s)\in\mathbb{R}[\eta][w_1,\dots, w_s]$ whose coefficients are polynomials in $\eta$ of degree $\leq d$. Then $\tilde{f}_{l,\eta}(w_1,\dots, w_s)(\prod_{1\leq i < j \leq s}(w_i - w_j))^{2^{q} - 2^{d}}$ are homogeneous polynomials in $w_1,\dots, w_s$ of degree $2^{d}\binom{s}{2} + (\sum_{i=1}^{s}y_i)-2+(2^{q} - 2^{d})\binom{s}{2} = 2^{q}\binom{s}{2} + (\sum_{i=1}^{s}y_i)-2 $, and the coefficents are polynomials in $\eta$ of degree $\leq d\leq q$, and the result follows.
\end{proof}

\section{Proof of Lemma \ref{lem_zero}} \label{sec:appendixproof2}
\begin{lemma} \label{lem_zero1}
Suppose the polynomial $f(x)  :=  \sum_{\alpha} c_{\alpha} x^{\alpha} \in \mathbb{R}[x]$ has a zero of order $n > 0$ at $x_0$. Then for $d\in\mathbb{N}\cup \{0\}$ with $d < n$, the polynomial $f_{d}(x)  := \sum_{\alpha}c_{\alpha}x^{\alpha} \alpha^{d}$ has a zero of order $(n-d)$ at $x_0$. In particular, we have \begin{align}
    \sum_{\alpha}c_{\alpha}x_0^{\alpha} \alpha^{d} \; = \; 0
\end{align} for all $d\in\mathbb{N}\cup \{0\}$ with $d < n$.
\end{lemma}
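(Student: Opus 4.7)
\medskip

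\noindent\textbf{Proof proposal.} The plan is to recognize $f_d$ as the result of applying the Euler operator $\theta := x\,\tfrac{d}{dx}$ to $f$ a total of $d$ times, and then to track how $\theta$ affects the order of vanishing at $x_0$.

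First I would observe the elementary identity $\theta(x^{\alpha}) = \alpha\, x^{\alpha}$, which, upon iteration, gives $\theta^d(x^{\alpha}) = \alpha^d\, x^{\alpha}$. By linearity this yields
\begin{equation}
f_d(x)\ =\ \sum_{\alpha} c_{\alpha}\, \alpha^d\, x^{\alpha}\ =\ \theta^d f(x).
\end{equation}
Thus the lemma reduces to showing that each application of $\theta$ reduces the order of vanishing at $x_0$ by at most one.

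Next I would verify the latter claim directly. Suppose $h$ is a polynomial with a zero of order $m \geq 1$ at $x_0$, so that $h(x) = (x - x_0)^m g(x)$ for some polynomial $g$. Differentiating gives
\begin{equation}
h'(x)\ =\ m(x - x_0)^{m-1} g(x) + (x - x_0)^m g'(x),
\end{equation}
which vanishes to order at least $m-1$ at $x_0$. Multiplying by $x$ preserves (and if $x_0 = 0$ even increases) the order of vanishing at $x_0$. Hence $\theta h$ vanishes to order at least $m-1$ at $x_0$. (The case $x_0 = 0$ is in fact immediate from the series expansion, since vanishing of order $n$ at the origin means $c_0 = \cdots = c_{n-1} = 0$, so $f_d(x) = \sum_{\alpha \geq n} c_{\alpha} \alpha^d x^{\alpha}$ vanishes to order at least $n$ at $0$.)

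Iterating this observation $d$ times starting from $f$, which vanishes to order $n$ at $x_0$, shows that $f_d = \theta^d f$ vanishes to order at least $n - d$ at $x_0$. Since $d < n$ forces $n - d \geq 1$, the final statement $\sum_{\alpha} c_{\alpha} x_0^{\alpha} \alpha^d = 0$ follows by evaluating $f_d$ at $x_0$. For a polynomial $p$ of degree $d < n$, one then expands $p(\alpha) = \sum_{j=0}^{d} b_j \alpha^j$ and sums the vanishing identities against the coefficients $b_j$ to deduce the more general statement used in the body of the paper. No step is genuinely subtle; the only thing to be careful about is the edge case $x_0 = 0$, which is handled separately above.
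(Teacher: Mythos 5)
Your proposal is correct and takes essentially the same approach as the paper: the paper's entire proof is the one-line observation that $f_d(x) = x f_{d-1}'(x)$, which is precisely your Euler-operator recursion $f_d = \theta^d f$ with $\theta = x\,\frac{d}{dx}$. You simply spell out the iteration and the edge case $x_0 = 0$, which the paper leaves implicit.
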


\begin{proof}
The result follows from the fact that \begin{align}f_{d}(x) = xf_{d-1}'(x)\end{align} for all $d\in\mathbb{N}$ with $d < n$.
\end{proof}

\begin{lemma}
    Suppose the polynomial $f(x)  :=  \sum_{\alpha} c_{\alpha} x^{\alpha} \in \mathbb{R}[x]$ has a zero of order $n > 0$ at $x_0$. Then \begin{align}
    \sum_{\alpha}c_{\alpha}x_0^{\alpha} p(\alpha) \; = \; 0
\end{align} for all polynomial $p$ of degree $d < n$.
\end{lemma}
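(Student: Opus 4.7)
The plan is to reduce the general polynomial statement to the monomial case already handled by Lemma \ref{lem_zero1}. Since the set of polynomials of degree less than $n$ is spanned by the monomials $1, x, x^2, \ldots, x^{n-1}$, and the map $p \mapsto \sum_{\alpha} c_\alpha x_0^\alpha p(\alpha)$ is $\mathbb{R}$-linear in $p$, it suffices to show that the sum vanishes when $p(\alpha) = \alpha^d$ for each $d \in \{0, 1, \ldots, n-1\}$. This is exactly the content of Lemma \ref{lem_zero1}.

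More precisely, I would write $p(\alpha) = \sum_{d=0}^{D} a_d \alpha^d$ where $D = \deg p < n$ and $a_0, \ldots, a_D \in \mathbb{R}$. Then, by linearity of the finite sum over $\alpha$,
\begin{equation*}
\sum_{\alpha} c_\alpha x_0^\alpha p(\alpha) \;=\; \sum_{d=0}^{D} a_d \sum_{\alpha} c_\alpha x_0^\alpha \alpha^d.
\end{equation*}
By Lemma \ref{lem_zero1}, each inner sum $\sum_\alpha c_\alpha x_0^\alpha \alpha^d$ vanishes since $d \leq D < n$. Therefore the entire expression is $0$.

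There is no genuine obstacle here: the work has already been done in Lemma \ref{lem_zero1} (whose proof uses the identity $f_d(x) = x f_{d-1}'(x)$ to inductively lower the order of the zero at $x_0$ by exactly one per application). The present lemma is simply the linearized restatement obtained by expanding an arbitrary polynomial $p$ of degree less than $n$ in the monomial basis. The only thing to be careful about is to note that $p$ has finitely many terms and $f$ has finitely many nonzero coefficients $c_\alpha$, so all sums are finite and interchanging the order of summation is trivially justified.
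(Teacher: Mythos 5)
Your proof is correct and is essentially the same as the paper's: the paper states that the result ``follows immediately from Lemma~\ref{lem_zero1},'' and your argument simply spells out the linearity step (expanding $p$ in the monomial basis) that makes that deduction immediate.
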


\begin{proof}
The result follows immediately from Lemma \ref{lem_zero1}.
\end{proof}

\section{Proof of Proposition \ref{proposition contribution}}  \label{app:blip}
By Lemma \ref{lem_contributions}, the configurations with the highest number of degrees of freedom contain only $1$- and $2$-blocks.
The number of ways to arrange the constants $w_1$'s and the blocks (take all blocks to be identical) is
\begin{equation*}
    \frac{(\alpha+i)^{\beta}}{\beta!}+\tilde{p}(\alpha+i)
\end{equation*} where $\tilde{p}$ is a polynomial of degree $\leq \beta-1$,
and the number of ways to choose the 1-blocks among all the blocks is $\binom{\beta}{m_1}$.

Now we assign the equivalence classes modulo $k$ of the inner indices of the 2-blocks. The number of ways to assign inner indices of 2-blocks is $(k-1)^{\beta-m_1}$. The number of ways to assign indices of the 1-blocks is the same as the number of cyclic product
$b_{i_1i_2}b_{i_2i_3}\cdots b_{i_{m_1}i_1}$, where $i_j$'s are chosen from $k_1$ residues modulo $k$ with the $b$'s matched in pairs under the restriction that $i_{j} \neq i_{j+1}$ for all $j$. Thus it is the expected trace of $m_1^{\text{th}}$ power of $k_1 \times k_1$ GOE, which is
\begin{equation*}
    \mathbb{E}_{k_1}\left[\text{Tr}\left(B^{m_1}\right)\right].
\end{equation*}
Finally, for each index, once we have specified its congruence class modulo $k$, the number of ways to choose it from $\{1,2,\dots, N\}$ is $\left(\frac{N}{k}\right)^{\alpha+i-\beta}+O(\frac{N}{k})^{\alpha+i-\beta-1}$.

\section{Proof of Proposition \ref{prop:expmoment}} \label{app:blip2}
By Proposition $\ref{proposition contribution}$ and \eqref{eqn:blipmoment}, we get the contribution from the configurations with $\beta$ blocks to the expected $m^{\text{th}}$ moment of the blip is
\begin{align} \label{eqn:blipmomentcontribution}
&\frac{1}{k_1}\sum_{\alpha = 2n} ^{4nl}c_{\alpha} \left(\frac{k}{w_1N}\right)^{\alpha}\sum_{i = 0}^{m}\binom{m}{i} \left(-\frac{w_1N}{k}\right)^{m-i} \nonumber\\
&\left(\sum_{m_1 = 0}^{\beta}
w_1^{\alpha+i-m_1-2(\beta-m_1)}
\binom{\beta}{m_1}
(k-1)^{\beta-m_1}
\mathbb{E}_{k_1}\left[\text{Tr}(B^{m_1})\right]\right) \nonumber\\
&\left(\frac{(\alpha+i)^{\beta}}{\beta!}+\tilde{p}(\alpha+i)\right)
\left(\frac{N}{k}\right)^{\alpha+i-\beta}
+O_{\beta}\left((\alpha+i)^{\beta}
\left(\frac{N}{k}\right)^{\alpha+i-\beta-1}
\right).
\end{align}

Recall that by \eqref{equation 3.5} and Lemma \ref{lem_contributions}, the contribution becomes insignificant as $N \to \infty$ if $\beta>m$.
On the other hand, given any polynomial $p(x)\in\mathbb{R}[x]$ of degree less than $m$ and $t\in\mathbb{Z}$, we have $\sum_{i = 0}^{m}\binom{m}{i} \left(-\frac{w_1N}{k}\right)^{m-i}p(\alpha+i) \left(\frac{N}{k}\right)^{\alpha+i-t}= \left(\frac{N}{k} \right)^{m+\alpha-t} \sum_{i = 0}^{m}\binom{m}{i}(-1)^{m-i}p(\alpha+i) = 0$ from Lemma \ref{lem_zero} using the fact that $(x-1)^m$ has a zero of order $m$ at $1$, so the contribution cancels out if $\beta < m$.
Therefore, only the configurations with $m$ blocks will contribute.

We set $\beta=m$ in \eqref{eqn:blipmomentcontribution}, and use the identity $$\sum_{i = 0}^{m} \binom{m}{i} (-1)^{m-i}i^{j} \;=\; \begin{cases}
 0 & \text{ if } j = 0,1,\dots, m-1, \\
 m! & \text{ if } j = m,
\end{cases}$$ and the fact that $ \sum_{\alpha = 2n}^{4nl} c_{\alpha} = f_1^{2n}(1) = 1$ to get the expected $m$-th  moment
\begin{equation}
    \lim\limits_{N\to\infty}\mathbb{E}\left[\mu_{A_N,1}^{(m)}\right] \; = \;\frac{1}{k_1}\sum_{m_1=0}^m\binom{m}{m_1}\left(\frac{k-1}{w_1}\right)^{m-m_1}\mathbb{E}_{k_1}\left[\text{Tr}(B^{m_1})\right].
\end{equation}
\section{Details for absolute convergence and Proof of Lemma 4.6}\label{appendix F}


By \cite{BCDHMSTPY}, we can treat the $m\textsuperscript{th}$ moment of empirical spectral measure near $Nw_i/k$, $\mu_{A_N,i}^{(m)}$, as a random variable on $\Omega$. Here $\Omega\ :=\ \prod_{N \in \N} \Omega_N$, where $\Omega_N$ is the probability space of $N \times N$ $(k,W)$ Checkerboard matrices.
\begin{defi}\cite{BCDHMSTPY}\label{defi of X}
We define the random variable $X_{m,N,1}$ on $\Omega$
\begin{equation}\label{eq_xmn}
X_{m,N,i}(\{A_N\})\ :=\ \mu_{A_N,i}^{(m)}.
\end{equation}
\end{defi}
which has the centered $r$\textsuperscript{th} moment as
\begin{equation}
X_{m,N,i}^{(r)}\ :=\ \E[(X_{m,N,i}-\E[X_{m,N,i}])^r].
\end{equation}

\begin{defi}\cite{BCDHMSTPY}\label{def_average_blip_measure}
Fix a function $g: \N \rightarrow \N$. The \textbf{averaged empirical blip spectral measure} associated to $\overline{A} \in \Omega^\N$ is
\begin{equation}
\mu_{N,g,\overline{A},i}\ :=\ \frac{1}{g(N)}\sum_{j=1}^{g(N)} \mu_{A_N^{(j)},i}
\end{equation}
\end{defi}

This is to project onto the $N$\textsuperscript{th} coordinate in each copy of $\Omega$ and then average over the first $g(N)$ of these $N \times N$ matrices.

\begin{defi}\cite{BCDHMSTPY}
 We denote by $Y_{m,N,g,i}$ the random variable on $\Omega^\N$ defined by the moments of the averaged empirical blip spectral measure near $Nw_i/k$.
\begin{equation}
Y_{m,N,g,i}(\overline{A})\ :=\ \mu_{N,g,\overline{A},i}^{(m)}.
\end{equation}
The centered $r$\textsuperscript{th} moment (over $\Omega^\N$) of this random variable will be denoted by $Y_{m,N,g,i}^{(r)}$.
\end{defi}


With the defintions, we are ready to prove Lemma \ref{lem_moments_of_moments}.


\begin{proof}
Without loss of generality it suffices to prove it when $i=1$.
 \begin{align}
 \E\left[\left(X_{m,N,1}-\E[X_{m,N,1}]\right)^r\right]\ &\ =\  \E\left[\sum_{l=0}^r \binom{r}{l}(X_{m,N,1})^\ell \left(\E[X_{m,N,1}]\right)^{r-l}\right]\  \nonumber \\
 &\ =\ \ \sum_{l =0}^r \binom{r}{l}(-1)^l\E\left[(X_{m,N,1})^l\right] \left(\E[X_{m,N,1}]\right)^{r-l}.
\end{align}
By \eqref{moment}, we have $\E[X_{m,N,1}]=O_m(1)$, hence $\left(\E[X_{m,N,1}]\right)^{r-l}=O_{m,r,l}(1)$ for all $l$. As such, it suffices to show that $\E\left[(X_{m,N,1})^\l\right]=O_{m,l}(1)$.

\begin{align}
\E[{X_{m,N,1}}^l]\ &\ =\ \E\left[ \frac{1}{k_1}\sum_{\alpha = 2n} ^{4nl}c_{\alpha} \left(\frac{k}{w_1N}\right)^{\alpha}\left(\sum_{i = 0}^{m}\binom{m}{i} \left(-\frac{w_1N}{k}\right)^{m-i}\text{Tr}(A_N^{\alpha+i})\right)^l\right]  \nonumber \\
&\ =\  \E\left[\sum_{\substack{2n \leq \ga_1 \leq 4nl\\ 0 \leq i_1 \leq m}} \sum_{\substack{2n \leq \ga_2 \leq 4nl\\ 0 \leq i_2 \leq m}}\dots\sum_{\substack{2n \leq \ga_l \leq 4nl\\ 0 \leq i_l \leq m}} \prod_{\nu=1}^lc_{\ga_v} {m \choose i_v} (-1)^{m-i_v}(\frac{w_1N}{k})^{l-i_v}
 \tr A^{\ga_+i_v} \right] \nonumber \\
&\ =\  \sum_{\substack{2n \leq \ga_1 \leq 4nl\\ 0 \leq i_1 \leq m}} \sum_{\substack{2n \leq \ga_2 \leq 4nl\\ 0 \leq i_2 \leq m}}\dots\sum_{\substack{2n \leq \ga_l \leq 4nl\\ 0 \leq i_l \leq m}} \prod_{\nu=1}^lc_{\ga_v} {m \choose i_v} (-1)^{m-i_v}(\frac{w_1N}{k})^{l-i_v}
\E \left[\prod_{v=1}^l \tr A^{\ga_v+i_v} \right]. \label{eq_finite_moments}
\end{align}

Now consider $\E \left[\prod_{v=1}^l \tr A^{\ga_v+i_v} \right]$. Let $C_t$ denotes
 the cycle $m_{i_1i_2}m_{i_2i_3}\dots m_{i_ti_1}$, then
 \[\E \left[\prod_{v=1}^l \tr A^{\ga_v+i_v} \right]=\sum_{C_{\alpha_1+i_1}}\sum_{C_{\ga_2+i_2}} \dots \sum_{C_{\ga_l+i_l}}\E\left[\prod_{v=1}^l C_{\ga_v+i_v} \right],\]
where $\sum_{C_{\ga_k+i_k}}$ means summing over all cycles of length $\ga_k+i_k$.

Now fix the congruence class of each cycle. For a fixed congruence configuration of $\tr A^{\ga_v+i_v}$, suppose that the distinct constants $w_{j_1}^v$, $w_{j_2}^v$, $\dots$, $w_{j_{s_{v}}}^v$ appear in the configuration, where each $w_{j_q}^v$ appears $x_{q}^v$ times and the $x_q^v$ $w_{j_q}^v$'s are seperated by the blocks into $y_q^v$ parts. Notice that when we fix the configurations, $y_q^v$ are fix.  Denote the number of $a$ as $r_v$, so we have $x_1^v+\dots +x_{s_v}^v=\ga_v+i_v-r_v$. When we fix the configuration for all the cycles, by Equation $\eqref{eqn:combinatorics}$ there are
\begin{equation}
    \prod_{v=1}^l\sum_{\substack{x_i^v \geq y_i^v \\ x_1^v+\dots+x_s^v=\alpha_v+i_v-r_v}}\prod_{q_v=1}^{s_v}\binom{x_q^v-1}{y_q^v-1}w_{j_q^v}^{x_q^v}\end{equation}
    ways to place the constants $w_{j_1}^v$, $w_{j_2}^v$, $\dots$, $w_{j_s}^v$.
    For fixed $y_1^v,\dots, y_{s_v}^v$, we can write it as \begin{equation}
    \prod_{v=1}^l\sum_{\substack{x_i^v \geq y_i^v \\ x_1^v+\dots+x_{s_v}^v=\alpha_v+i_v-r_v}}\widetilde{g_v}_{y_1^v,\dots, y_{s_v}^v} (x_1^v,\dots, x_{s_v}^v)w_{j_1^v}^{x_1^v}w_{j_2^v}^{x_2^v}\cdots w_{j_{s_v}^v}^{x_{s_v}^v}\end{equation}
    where $\widetilde{g_v}_{y_1^v,\dots, y_{s_v}^v}(x_1^v,\dots, x_{s_v}^v) \in \mathbb{R}[x_1^v,\dots, x_{s_v}^v]$ is a polynomial in $x_1^v,\dots, x_{s_v}^v$ of degree $\sum_{q_v=1}^{s_v}(y_{q_v}^v-1) = (y_1^v+\cdots+y_{s_v}^v) - s_v = \beta_v-s_v$.
Following the same reasoning as Lemma $\ref{lem:sumform}$, apply Lemma $\ref{lem:poly}$ and take into account the cyclic permutation, we get that the total contribution can be written as a sum of the terms of the form
\begin{equation*}
    \prod_{v=1}^lp_v(\alpha_v+i_v)w_{j_v}^{\alpha_v+i_v-\gamma_v}\left(\frac{N}{k}\right)^{\alpha_v+i_v-t_v}
    \end{equation*}
    where $p(x) \in\mathbb{R}[x]$ is a polynomial of degree $\leq \beta_v-s_v+1$, $\gamma\in\mathbb{Z}$ and $\left(\frac{N}{k}\right)^{\alpha+i-t}$ is from choosing the indices from given equivalence classes modulo $k$, and $t_v$ is the lost degree of freedom.

    By similar reasoning as \eqref{equation 3.5}, we can restrict ourselves to the set of configurations that $\beta_1+\beta_2+\dots+\beta_l \leq ml $.
    If any of the $\beta_v \leq m-1$, or $s_v \geq 2$ (which means there is some $w_{j_v} \neq w_1$ in the configuration), then the degree of $p_v(\ga_v+i_v) \leq m-1$. By $\eqref{eqn:eqn1}$  and $\eqref{eqn:eqn2}$,
    \[\sum_{\alpha_v = 2n} ^{4nl}c_{\alpha_v} \left(\frac{k}{w_1N}\right)^{\alpha_v}\left(\sum_{i = 0}^{m}\binom{m}{i_v} \left(-\frac{w_1N}{k}\right)^{m-i_v}p_v(\alpha_v+i_v) w_{jv}^{\alpha_v + i_v-\gamma_v}\left(\frac{N}{k}\right)^{\alpha_v + i_v - t_v}\right) \ = \ 0.\]
Then the contribution of the configuration to $\E \left[\prod_{v=1}^l \tr A^{\ga_v+i_v} \right]$ is $0$. Therefore, the only set of configurations that make contribution to  $\E \left[\prod_{v=1}^l \tr A^{\ga_v+i_v} \right]$ is those where $\beta_v=m$ for all $v$, and only $w_1$ between the blocks, and there are only 1-blocks and 2-blocks that match with each other in the way described by Lemma $\ref{lem_contributions}$.

Now we can apply the same argument as in the Lemma 5.6 of \cite{BCDHMSTPY}. At each cycle, fix the number of blocks $\beta_v=m$, the number of ways to arrange the blocks and $w_1$'s is $\left(\frac{(\alpha_v+i_v)^{\beta_v}}{\beta_v!}+\tilde{p}(\alpha_v+i_v)\right)$ where $\tilde{p}(\alpha_v+i_v)$ has degree $\leq \beta_v-1$. The number of ways to choose $1$-blocks and to choose the matchings and the indexing modulo $k$ is independent of $N$; the contribution made by power of $w_1$, so we can denote the constant as $C$. Therefore
\begin{align*}
   &\E[{X_{m,N,1}}^l] =\ C\sum_{\substack{2n \leq \ga_1 \leq 4nl\\ 0 \leq i_1 \leq m}}\dots\sum_{\substack{2n \leq \ga_l \leq 4nl\\ 0 \leq i_l \leq m}} \prod_{\nu=1}^lc_{\ga_v} {m \choose i_v} (-1)^{m-i_v}(\frac{w_1N}{k})^{m-i_v}\left(\frac{(\alpha_v+i_v)^{\beta_v}}{\beta_v!}+\tilde{p}(\alpha_v+i_v)\right), \nonumber \\
\end{align*}
which is just $C$ since $\sum_{i=0}^m {m \choose i}(-1)^{m-i}i^m=m!$ and $\sum_{\ga=2n}^{4nl}c_\alpha=1$. This proves the lemma.
\end{proof}

Then following the exactly same steps as the proof of Theorem 5.5 in \cite{BCDHMSTPY}, we can prove the convergence of averaged empirical blip spectral measure:
\begin{thm}\label{thm_as_convergence}
Let $g: \N \rightarrow \N$ be such that there exists an $\delta>0$ for which $g(N) = \omega(N^\delta)$. Then, as $N\to\infty$, the averaged empirical spectral measures $\mu_{N,g,\overline{A},i}$ of the $k$-checkerboard ensemble converge weakly almost-surely to the measure with moments $M_{k,m,i}=\frac{1}{k_i}\mathbb{E}_{k_i}\left[\text{Tr}\left(B^{m_1}\right)\right]$.
\end{thm}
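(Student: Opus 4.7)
The plan is to upgrade the expected-moment convergence already proved to almost-sure weak convergence via the method of moments. The strategy has three ingredients: (i) convergence of the expectation $\E[Y_{m,N,g,i}]$ to $M_{k,m,i}$, (ii) polynomial-rate concentration of $Y_{m,N,g,i}$ around its mean, and (iii) passage from moment convergence to weak convergence, which is legitimate because the hollow $k_i \times k_i$ GOE is compactly supported and hence moment-determined (Carleman's condition).

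Step (i) is immediate: since the $A_N^{(j)}$ are i.i.d., linearity of expectation gives $\E[Y_{m,N,g,i}] = \E[X_{m,N,i}]$, so Proposition \ref{prop:expmoment} together with the centered-moment identity \eqref{moment} supplies the limit. Step (ii) is the heart of the argument. Writing $Z_j := X_{m,N,i}(A_N^{(j)}) - \E[X_{m,N,i}]$, one has $Y_{m,N,g,i} - \E[Y_{m,N,g,i}] = g(N)^{-1}\sum_{j=1}^{g(N)} Z_j$, and I would expand the centered $r$-th moment as
\[
Y_{m,N,g,i}^{(r)} \;=\; g(N)^{-r}\sum_{(j_1,\ldots,j_r)} \E[Z_{j_1}\cdots Z_{j_r}].
\]
Independence and $\E[Z_j]=0$ kill every tuple in which some index appears exactly once, so only tuples with every index repeated at least twice survive; there are $O(g(N)^{\lfloor r/2\rfloor})$ such tuples, and Lemma \ref{lem_moments_of_moments} combined with Hölder's inequality bounds each surviving expectation by $O_{m,r}(1)$. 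For even $r$ this yields
\[
Y_{m,N,g,i}^{(r)} \;\ll_{m,r}\; g(N)^{-r/2}.
\]

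Chebyshev's inequality then gives $\Pr(|Y_{m,N,g,i} - \E[Y_{m,N,g,i}]| > \varepsilon) \ll_{m,r,\varepsilon} g(N)^{-r/2} \ll N^{-\delta r/2}$, using the hypothesis $g(N) \gg N^\delta$. Choosing $r$ large enough that $\delta r/2 > 1$ makes these probabilities summable in $N$, and Borel--Cantelli gives $Y_{m,N,g,i} \to M_{k,m,i}$ almost surely for each fixed $m$. Intersecting the resulting null sets over the countable family $m\in\N$ produces a single full-measure event on which every moment of $\mu_{N,g,\overline{A},i}$ converges to the corresponding moment of the hollow GOE limit, and step (iii) then upgrades this to almost-sure weak convergence.

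The main obstacle is the bookkeeping for the mixed moments in step (ii): one must verify both the combinatorial count $O(g(N)^{\lfloor r/2\rfloor})$ of surviving tuples \emph{and} that the surviving mixed expectations are controlled uniformly in $N$ and in the tuple structure. This is precisely the role of Lemma \ref{lem_moments_of_moments}, since a bound that grew with $N$ would destroy the polynomial-in-$g(N)$ decay needed to feed Borel--Cantelli; everything else is a standard application of moment-method machinery, essentially parallel to Theorem 5.5 of \cite{BCDHMSTPY}, with our generalized weight polynomial $f_i^{2n}$ already absorbed into the statement of Lemma \ref{lem_moments_of_moments}.
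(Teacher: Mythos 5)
Your proposal is correct and follows essentially the same route the paper takes (the paper simply defers to the proof of Theorem 5.5 in \cite{BCDHMSTPY}): expand the centered $r$-th moment of $Y_{m,N,g,i}$ over tuples $(j_1,\dots,j_r)$, use i.i.d.\ centering to discard any tuple with a singleton index, use the uniform-in-$N$ bound of Lemma \ref{lem_moments_of_moments} to control the $O(g(N)^{\lfloor r/2\rfloor})$ surviving terms, and then Chebyshev, Borel--Cantelli over $m$ and rational $\varepsilon$, and moment-determinacy finish. One small slip to fix: the limiting hollow-GOE spectral measure is not compactly supported when $k_i>1$ (the eigenvalues of a Gaussian matrix are unbounded), but its moments do satisfy Carleman's condition, which is the hypothesis actually required for step (iii).
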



\section{An explicit construction of blips at Fibonacci numbers} \label{sec:fibonacci}
In this section, we give an explicit construction of a sequence of random matrices such that as $N\to\infty$, almost surely there is an eigenvalue, after normalized by dividing $N$, at all Fibonacci numbers. We can apply the same approach to force the normalized blip eigenvalues at any given sequence of real numbers. We begin by extending the definition of the generalized checkerboard matrices to allow $k$ to grow with $N$.
\begin{defi}
For fixed $N\in \mathbb{N}$ and a collection of $k_N \leq N$ real numbers $W_N = (w_1,\dots, w_{k_N})$, the $N\times N$ $(k_N, W_N)$-checkerboard ensemble is the ensemble of matrices $M=(m_{ij})$ given by \begin{align}
    m_{ij} \; = \; \begin{cases}
    a_{ij} & \text{ if } i\not\equiv j\pmod{k_N}, \\
    w_{u} & \text{ if } i\equiv j \equiv u\pmod{k_N},\text{ with  } u\in \{1,2,\dots, k_N\},
    \end{cases}
\end{align} where $a_{ij} = a_{ji}$ are independent and identically distributed random variables with mean $0$, variance $1$, and finite higher moments.
\end{defi}

Let $(k_N)_{N\in\mathbb{N}}$ be a non-decreasing sequence of positive integers with $k_N\leq N$ for each $N\in\mathbb{N}$, $\lim\limits_{N\to\infty}k_N = \infty$ and $k_N = O(\sqrt{N})$. Denote the $n$-th Fibonacci number by $F_n$ where $F_1 = 1$, $F_2 = 2$ and $F_{n+1} = F_{n} + F_{n-1}$.

Let $(A_N)_{N\in \mathbb{N}}$ be a sequence of matrices such that each $A_N$ is a $(k_N, W_N)$-checkerboard matrix with $W_N = (0,\dots, 0)$, and consider the normalized empirical spectral measures \begin{align} \label{defmeas}
\nu_{A_N}(x) \;=\; \frac{1}{N}\sum_{\lambda \text{ \rm an eigenvalue of } A_N}\delta\left(x-\frac{\lambda}{\sqrt{N}}\right).\end{align}
By the same argument as in \S \ref{sec:bulk}, we obtain the following two results.

\begin{proposition}
 Let $(A_N)_{N\in \mathbb{N}}$ be a sequence of matrices such that each $A_N$ is from the $N\times N$ $(k_N, 0)$-checkerboard ensemble. Then the empirical spectral measure $\mu_{A_N}$ defined as \eqref{defmeas} converges almost surely to the Wigner semicircle measure $\sigma_{R}$ with radius \begin{align}
    R = \begin{cases}
    2\sqrt{1-\frac{1}{k}} & \text{ {\rm if} }\lim\limits_{N\to\infty}k_N = k, \\
    2 & \text{ {\rm if} }\lim\limits_{N\to\infty}k_N = \infty.
    \end{cases}
\end{align}
\end{proposition}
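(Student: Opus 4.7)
The plan is to use the same method of moments machinery as in Section \ref{sec:bulk}, carefully tracking the dependence on $k_N$ as $N$ grows, and then split into the two cases of the statement. In the first case, since $(k_N)$ is a non-decreasing sequence of positive integers with $\lim k_N = k < \infty$, there exists some $N_0$ with $k_N = k$ for all $N \geq N_0$. Then for all $N \geq N_0$, $A_N$ is a matrix from the $(k,0)$-checkerboard ensemble in the sense of Definition \ref{defn:checkerboard}, so the conclusion follows directly from Lemma \ref{lem:avgmomentsbulk1} combined with the standard almost-sure convergence arguments used throughout the paper (see Appendix A of \cite{BCDHMSTPY}).

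The interesting case is $k_N \to \infty$. By the eigenvalue trace lemma,
\begin{equation*}
\mathbb{E}\!\left[\nu_{A_N}^{(2m)}\right] \;=\; \frac{1}{N^{m+1}} \sum_{1 \leq j_1, \dots, j_{2m} \leq N} \mathbb{E}\!\left[m_{j_1 j_2} m_{j_2 j_3} \cdots m_{j_{2m} j_1}\right].
\end{equation*}
Since $W_N=(0,\dots,0)$, each factor is either an independent mean-zero variable $a_{ij}$ or the fixed value $0$, and the expectation vanishes unless every factor is some $a$ and every $a$ is matched an even number of times. Following the classical Wigner analysis, the top-order contribution comes from matchings that yield non-crossing pair partitions with $m+1$ distinct vertices, enumerated by the Catalan number $C_m$. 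The new feature here is a residue-class constraint: the factor $a_{j_\ell j_{\ell+1}}$ is zero when $j_\ell \equiv j_{\ell+1} \pmod{k_N}$, so every edge of the pair partition must connect indices in distinct residue classes. For a tree-like non-crossing pair partition on $m+1$ vertices, the number of proper residue-class assignments is $k_N(k_N-1)^m$, and the number of ways to promote each residue class to an actual index in $\{1,\dots,N\}$ is $(N/k_N)^{m+1}\bigl(1+O(1/N)\bigr)$, provided $k_N=O(\sqrt{N})$ so that the classes remain large compared to $m$. Combining, the leading even moment is $C_m(1-1/k_N)^m + o(1) \to C_m$ as $k_N\to\infty$, while odd moments vanish to leading order. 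These are precisely the moments of $\sigma_2$, the Wigner semicircle of radius $2$.

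To upgrade expected-moment convergence to almost-sure weak convergence, the plan is to estimate $\text{Var}\!\left[\nu_{A_N}^{(m)}\right] = O(1/N^2)$ by the standard crossover argument (two independent closed walks of length $m$ that share at least one edge lose an extra degree of freedom), then apply Chebyshev together with Borel--Cantelli to pass to almost-sure convergence of moments, and finally invoke the fact that the semicircle measure is characterized by its moments. The main obstacle lies in the bookkeeping for the second case: one must verify that configurations other than non-crossing pair partitions with the maximum number of vertices really do contribute strictly lower-order terms, even as $k_N$ grows with $N$. Concretely, one must show that a configuration losing an extra degree of freedom in $N$ does not simultaneously gain a compensating factor of $k_N$, which is exactly where the hypothesis $k_N = O(\sqrt{N})$ is used to keep all the error terms genuinely subleading.
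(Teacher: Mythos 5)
Your proposal is correct and takes essentially the same route the paper takes: the paper's ``proof'' of this proposition is the one-line remark that the same method-of-moments argument as in Section~\ref{sec:bulk} (i.e., the trace lemma plus Wigner-style counting of closed walks) goes through, and your argument is a correct and more explicit unpacking of exactly that. The ``main obstacle'' you flag at the end is not actually an obstacle: for any closed walk with $v$ distinct vertices, the number of valid residue-class colorings is at most $k_N^v$ and the number of index lifts per vertex is at most $N/k_N + 1$, so the total count is $O(N^v)$ uniformly in $k_N$, with no compensating $k_N$ gain; this, together with $k_N = O(\sqrt N)$ to control the floor/ceiling errors (your $1 + O(1/N)$ should really read $1 + O(k_N/N) = 1 + O(N^{-1/2})$, a harmless slip), keeps every non-leading configuration subleading.
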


\begin{proposition} \label{opnorm}
 Let $(A_N)_{N\in \mathbb{N}}$ be a sequence of matrices such that each $A_N$ is from the $N\times N$ $(k_N, 0)$-checkerboard ensemble. Then as $N\to\infty$, \begin{align}
     \|A_N\|_{op} \;=\; O_{\epsilon}(N^{\frac{1}{2} + \epsilon})
 \end{align} almost surely.
\end{proposition}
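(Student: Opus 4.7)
The plan is to combine a high-moment estimate with Markov's inequality and the Borel-Cantelli lemma, in direct analogy with the argument recorded at the end of Section~\ref{sec:bulk} for the fixed-$k$ case.

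First I would exploit that $A_N$ is real symmetric, so for every positive integer $m$,
\begin{equation*}
\|A_N\|_{\text{op}}^{2m} \;\le\; \tr\bigl(A_N^{2m}\bigr),
\end{equation*}
and it therefore suffices to prove a moment bound $\E[\tr(A_N^{2m})] = O_m(N^{m+1})$ with implicit constant independent of $k_N$. Expanding via the eigenvalue trace formula, $\E[\tr(A_N^{2m})]$ is a sum over closed walks of length $2m$ on the complete graph $K_N$ of $\E[m_{i_1 i_2}\cdots m_{i_{2m}i_1}]$. Any summand in which some edge $(i_j, i_{j+1})$ satisfies $i_j \equiv i_{j+1} \pmod{k_N}$ vanishes, since the corresponding entry of $A_N$ is forced to $0$; any summand in which some distinct edge appears exactly once vanishes because the non-zero entries have mean $0$. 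Only walks in which every distinct edge is traversed at least twice survive, and the classical Wigner-type count shows there are $O_m(N^{m+1})$ such tuples in $\{1,\ldots,N\}^{2m}$, with each surviving summand bounded by a constant depending only on $m$ and on the first $2m$ moments of the entry distribution. This yields $\E[\tr(A_N^{2m})] \le C_m N^{m+1}$ uniformly in $k_N$.

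With this moment bound in hand, Markov's inequality gives
\begin{equation*}
\prob\bigl(\|A_N\|_{\text{op}} > N^{1/2+\epsilon}\bigr) \;\le\; \frac{\E[\tr(A_N^{2m})]}{N^{(1+2\epsilon)m}} \;\le\; C_m\, N^{1-2m\epsilon}.
\end{equation*}
Choosing $m$ large enough that $2m\epsilon > 2$, say $m = \lceil 2/\epsilon \rceil$, makes the right-hand side summable in $N$. The Borel-Cantelli lemma then implies that almost surely $\|A_N\|_{\text{op}} \le N^{1/2+\epsilon}$ for all sufficiently large $N$, which is the claim.

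The main obstacle is to verify that the moment bound $\E[\tr(A_N^{2m})] = O_m(N^{m+1})$ is genuinely uniform in $k_N$ as $k_N \to \infty$. The combinatorial count of closed walks of length $2m$ in $K_N$ whose edges are all matched at least twice is a purely combinatorial statement about tuples in $\{1,\ldots,N\}^{2m}$ independent of $k_N$ (tuples contributing have at most $m+1$ distinct vertices), and forcing the ``zero-diagonal'' entries to vanish can only remove, never add, summands; since $k_N = O(\sqrt{N})$ the density of forced zeros stays small and the standard Wigner estimate goes through without modification. Once this uniformity is granted, the remainder of the argument is routine probability theory and recovers precisely the conclusion recorded as Lemma~B.2 of \cite{BCDHMSTPY} in this slowly-growing-$k$ setting.
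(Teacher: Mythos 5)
Your proof is correct and follows the same underlying strategy as the paper (bound $\|A_N\|_{\text{op}}^{2m}$ by $\tr(A_N^{2m})$, control a high trace moment, apply a tail bound and Borel--Cantelli), but you take a slightly more elementary concentration step. The paper defers to Lemma~B.2 of \cite{BCDHMSTPY}, which uses both $\E[\tr(A_N^{2m})]=O(N^{m+1})$ and the second-moment bound $\var(\tr(A_N^{2m}))=O(N^{2m})$ (a Chebyshev argument), whereas you use only the first moment of the trace and Markov's inequality. This costs you a slightly larger choice of $m$ (you need $2m\epsilon>2$ rather than the weaker condition the variance route would permit), but that is irrelevant since $m$ is fixed once $\epsilon$ is, and it keeps the proof self-contained rather than relying on a lemma from the earlier paper. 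You are also right, and more explicit than the paper, that the uniformity of the moment bound in $k_N$ is the point that genuinely needs checking in this growing-$k$ setting; the correct reason is precisely the one you give first, namely that forcing more entries to zero only removes terms from the trace expansion, so the classical Wigner count is an unconditional upper bound. The additional remark that ``$k_N=O(\sqrt N)$ keeps the density of forced zeros small'' is superfluous for this upper bound (that hypothesis is needed elsewhere, for the semicircle limit, not here), but it does no harm.
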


For each $N\in\mathbb{N}$, define a fixed $N\times N$ matrix $Z_N$ by \begin{align}
    (Z_N)_{ij} \;=\; \begin{cases}
    F_n & \text{ {\rm if} } i\equiv j \equiv n \pmod{k_N}, \text{ where } 1\leq n\leq k_{N}, \\
    0 & \text{ otherwise}.
    \end{cases}
\end{align}

\begin{lem} \label{fixed}
The matrix $Z_N$ has rank $k_N$, and the $k_N$ nonzero eigenvalues are exactly $F_1\lceil\frac{N}{k_N}\rceil$, $\dots$, $F_{r_N}\lceil\frac{N}{k_N}\rceil$, $F_{r_N+1}\lfloor \frac{N}{k_N}\rfloor$, $\dots$, $F_{k_N}\lfloor \frac{N}{k_N}\rfloor$, where we write $N = q_N k_N + r_N$ with $q_N\in \mathbb{Z}$ and $r_N\in\{0,1,2,\dots, k_N-1\}$.
\end{lem}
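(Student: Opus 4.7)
The plan is to recognize that $Z_N$ has a transparent block structure once we group indices by their residue class modulo $k_N$. For each $n\in\{1,\dots,k_N\}$ let $S_n := \{i : 1\le i\le N,\ i\equiv n\pmod{k_N}\}$. The defining formula for $Z_N$ says that $(Z_N)_{ij}\neq 0$ exactly when $i$ and $j$ lie in a common class $S_n$, and in that case $(Z_N)_{ij}=F_n$. Conjugating by the permutation matrix $P$ that reorders the indices as $S_1,S_2,\dots,S_{k_N}$, we transform $Z_N$ into the block-diagonal matrix
\[
P Z_N P^{T}\;=\;\mathrm{diag}\bigl(F_1 J_{|S_1|},\ F_2 J_{|S_2|},\ \dots,\ F_{k_N} J_{|S_{k_N}|}\bigr),
\]
where $J_m$ denotes the $m\times m$ all-ones matrix. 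Since permutation conjugation preserves both rank and spectrum, it suffices to analyze this block-diagonal matrix block by block.

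Each block $F_n J_{|S_n|}$ is a symmetric rank-one matrix, since $F_n>0$ for every index $n\ge 1$ under the convention $F_1=1$, $F_2=2$, $F_{n+1}=F_n+F_{n-1}$. Its unique nonzero eigenvalue is $F_n|S_n|$, realized by the (normalized) all-ones eigenvector, while the orthogonal complement gives $|S_n|-1$ zero eigenvalues. Summing over the blocks yields $\rank(Z_N)=\sum_{n=1}^{k_N}1=k_N$, and the list of nonzero eigenvalues of $Z_N$ is exactly $\{F_n|S_n|\}_{n=1}^{k_N}$.

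It then remains only to count the sizes $|S_n|$. Writing $N=q_Nk_N+r_N$ with $0\le r_N<k_N$, a direct enumeration of the arithmetic progression $n, n+k_N, n+2k_N, \dots$ lying in $\{1,\dots,N\}$ gives $|S_n|=q_N+1=\lceil N/k_N\rceil$ for $1\le n\le r_N$ and $|S_n|=q_N=\lfloor N/k_N\rfloor$ for $r_N<n\le k_N$; when $r_N=0$ the two expressions coincide and the first range is empty, so no case is lost. Substituting these sizes into the list of nonzero eigenvalues yields exactly the statement of the lemma.

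There is no real obstacle in this argument; it is a direct computation once the block structure is recognized. The only point worth flagging is the nondegeneracy $F_n\neq 0$ for all $n\ge 1$, which is what guarantees that each block contributes a full unit of rank — without it the count $\rank(Z_N)=k_N$ would require adjustment.
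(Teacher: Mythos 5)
Your proof is correct and rests on the same observation as the paper's: grouping indices by residue class modulo $k_N$, on each class the matrix acts as a positive multiple of the all-ones matrix, whose all-ones vector is the unique eigenvector with nonzero eigenvalue $F_n|S_n|$, and $|S_n|$ is $\lceil N/k_N\rceil$ or $\lfloor N/k_N\rfloor$ according as $n\le r_N$ or not. The paper simply writes down these eigenvectors $v_n=\sum_j e_{n+jk_N}$ directly and bounds the rank by counting distinct columns, whereas you package the same idea as a permutation conjugation to block-diagonal form; your version is, if anything, slightly more self-contained about why there are no further nonzero eigenvalues.
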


\begin{proof}
By definition, the matrix $Z_N$ has at most $k_N$ different columns.

For each $i \in\{1,2,\dots, r_N\}$, define $v_i \in \mathbb{R}^{N}$ by $v_i = \sum_{j = 0}^{q_N} e_{i + jk_N}$, then $v_i$ is an eigenvector of $Z_N$ associated with eigenvalue $F_i\lceil\frac{N}{k_N}\rceil$.

For each $i\in\{r_N+1,\dots, k_N\}$, define $v_i \in \mathbb{R}^{N}$ by $v_i = \sum_{j = 0}^{q_N-1} e_{i + jk_N}$, then $v_i$ is an eigenvector of $Z_N$ associated with eigenvalue $F_i\lfloor\frac{N}{k_N}\rfloor$.
\end{proof}
\begin{rem}
By assumption, we have \begin{align}\lim\limits_{N\to\infty}\frac{\text{{\rm rank} }(Z_N)}{N}\ =\ \lim\limits_{N\to\infty}\frac{k_N}{N} \ =\ 0.\end{align}
\end{rem}

Construct a sequence $(M_N)_{N\in\mathbb{N}} $ of matrices by \begin{align} \label{fibens}
    M_N \; = \;A_N + k_N Z_N.
\end{align}
Note that each $M_N$ is an $N\times N$ checkerboard matrix.
\begin{thm}
Let $(M_N)_{N\in\mathbb{N}}$ be a sequence of checkerboard matrices defined as \eqref{fibens}. Then almost surely $M_N$ has $N-k_N$ eigenvalues of magnitude $O(N^{\frac{1}{2}+\epsilon})$ as $N\to\infty$. Moreover, for all $n\in\mathbb{N}$, almost surely $M_N$ has one eigenvalue of magnitude $NF_n + O(N^{\frac{1}{2}+\epsilon})$ as $N\to\infty$, where $F_n$ denotes the $n$-th Fibonacci number.
\end{thm}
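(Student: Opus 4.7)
The plan is to mirror the matrix perturbation argument used to establish the split behavior in \S\ref{sec:bulk}, applied now to the growing-$k$ setting with the explicit fixed matrix $k_N Z_N$.

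First, I would invoke Proposition \ref{opnorm} to obtain that almost surely $\|A_N\|_{\text{op}} = O_\epsilon(N^{1/2+\epsilon})$ as $N \to \infty$. Once this almost-sure event is fixed, the rest of the argument is deterministic: I just need to compare the spectrum of $M_N = A_N + k_N Z_N$ to that of $k_N Z_N$ via Weyl's inequality (Lemma \ref{lem_weyl}), which gives $|\lambda_j(M_N) - \lambda_j(k_N Z_N)| \leq \|A_N\|_{\text{op}} = O(N^{1/2+\epsilon})$ for every $j$.

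Next, I would use Lemma \ref{fixed} to write down the spectrum of $k_N Z_N$ explicitly. It has $N - k_N$ zero eigenvalues, together with the $k_N$ nonzero eigenvalues $k_N F_i \lceil N/k_N \rceil$ for $1 \leq i \leq r_N$ and $k_N F_i \lfloor N/k_N \rfloor$ for $r_N + 1 \leq i \leq k_N$. Since $N = q_N k_N + r_N$ with $0 \leq r_N < k_N$, we have
\begin{equation}
k_N \lceil N/k_N \rceil \;=\; N + (k_N - r_N)\cdot \mathbf{1}_{r_N > 0}, \qquad k_N \lfloor N/k_N \rfloor \;=\; N - r_N,
\end{equation}
so in both cases the value equals $N + O(k_N) = N + O(\sqrt{N})$ because $k_N = O(\sqrt{N})$. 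Consequently, for each $i \in \{1,\dots,k_N\}$, the corresponding nonzero eigenvalue of $k_N Z_N$ has magnitude $N F_i + O(F_i \sqrt{N})$, which for fixed $i$ is $N F_i + O(\sqrt{N})$.

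Combining these two steps with Weyl's inequality: the $N - k_N$ eigenvalues of $M_N$ paired with the zero eigenvalues of $k_N Z_N$ are $O(N^{1/2+\epsilon})$, while for any fixed $n \in \mathbb{N}$ and all $N$ large enough that $k_N \geq n$ (which holds eventually since $k_N \to \infty$), one eigenvalue of $M_N$ lies within $O(N^{1/2+\epsilon})$ of $N F_n + O(\sqrt{N})$, and hence has magnitude $N F_n + O(N^{1/2+\epsilon})$. No step is really an obstacle here; the only point requiring a small amount of care is verifying that the scaling $k_N = O(\sqrt{N})$ is exactly what is needed to absorb the discretization error $k_N \lceil N/k_N \rceil - N$ into the $O(N^{1/2+\epsilon})$ perturbation window, and that the almost-sure event from Proposition \ref{opnorm} is the unique source of randomness.
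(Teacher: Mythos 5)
Your proof is correct and follows essentially the same route as the paper's: invoke Proposition \ref{opnorm} for the almost-sure operator-norm bound on $A_N$, read off the spectrum of $k_N Z_N$ from Lemma \ref{fixed}, and compare via Weyl's inequality. Your version is slightly more explicit in the arithmetic — writing out $k_N\lceil N/k_N\rceil = N + (k_N - r_N)\mathbf{1}_{r_N>0}$ and $k_N\lfloor N/k_N\rfloor = N - r_N$ and tracking the $F_i$ factor in the discretization error — whereas the paper simply observes that the nonzero eigenvalue lands in $(NF_n - k_N, NF_n + k_N)$, but the substance is identical.
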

\begin{proof}
Fix any $n\in\mathbb{N}$.
By lemma \ref{fixed}, we know the matrix $k_N Z_N$ has eigenvalue $0$ of multiplicity $N-k_N$, and because $\lim\limits_{N\to\infty} k_N = \infty$, the matrix $k_N Z_N$ has one eigenvalue within the interval $(NF_n - k_N, NF_n + k_N)$ for all sufficiently large $N$. By assumption $\lim\limits_{N\to\infty}\frac{k_N}{N^{1/2}} < \infty$, the matrix $k_N Z_N$ has one eigenvalue of magnitude $NF_n + O(N^{\frac{1}{2}})$ for all sufficiently large $N$.

Let the eigenvalues of $M_N$ and $k_N Z_N$ be arranged in increasing order. As a consequence of Weyl's inequality, we have $|\lambda_{j}(M_N) - \lambda_{j}(k_N Z_N) | \leq \|A_N\|_{op}$ for each $j\in\{1,2,\dots, N\}$.

By Lemma \ref{opnorm}, almost surely $\|A_N\|_{op} = O(N^{\frac{1}{2}+\epsilon})$ as $N\to\infty$.
Therefore, almost surely $M_N$ has $N-k_N$ eigenvalues of magnitude $O(N^{\frac{1}{2}+\epsilon})$, and almost surely $M_N$ has one eigenvalue of magnitude $NF_n + O(N^{\frac{1}{2}+\epsilon})$, as $N\to\infty$.
\end{proof}

Therefore, if normalized by $N$, the limiting spectrum has one eigenvalue at each Fibonacci number. 
For example, Figure \ref{fig:fib} shows a histogram of the normalized eigenvalues with blips at the first $10$ Fibonacci numbers.
\begin{figure}[h]
    \centering
    \includegraphics[width = 16cm, height = 7 cm]{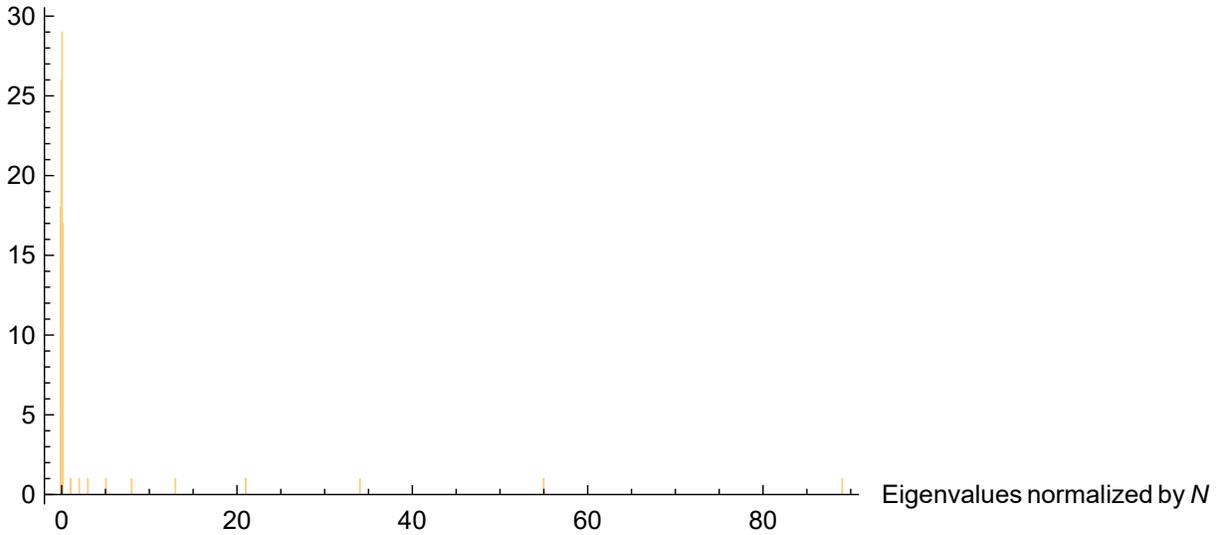}
    \caption{A histogram of the scaled eigenvalue distribution for $M_N$ with $N = 100$ and $k_{100} = 10$.}
    \label{fig:fib}
\end{figure}


\ \\
\end{document}